\providecommand*{\perispomeni}{\char126}
\def\encodingdefault{LGR}%
  \renewcommand{\~}{\perispomeni}%
\DeclareRobustCommand{\textgreek}[1]{\leavevmode{\greektext #1}}
\newcommand{\lyxmathsym}[1]{\ifmmode\begingroup\def\b@ld{bold}
  \text{\ifx\math@version\b@ld\bfseries\fi#1}\endgroup\else#1\fi}
\theoremstyle{plain}
\newtheorem{thm}{Theorem}[section]
  \theoremstyle{plain}
  \newtheorem{lem}[thm]{Lemma}
  \theoremstyle{definition}
  \newtheorem{defn}[thm]{Definition}
  \theoremstyle{remark}
  \newtheorem{rem}[thm]{Remark}
  \theoremstyle{plain}
  \newtheorem{prop}[thm]{Proposition}
  \theoremstyle{plain}
  \newtheorem{cor}[thm]{Corollary}
\begin{document}

\title{Complete intersection quiver settings with one dimensional vertices}

\author{Dániel Joó\\
 {\small Central European University}\\
 {\small e-mail: joo\_daniel@ceu-budapest.edu}}
\maketitle
\begin{abstract}
We describe the class of quiver settings with one dimensional vertices
whose semi-simple representations are parametrized by a complete intersection
variety. We show that these quivers can be reduced to a one vertex
quiver with some combinatorial reduction steps. We also show that
this class consists of the quivers from which we can not obtain two
specific non complete intersection quivers via contracting strongly
connected components and deleting subquivers. We also prove that the
class of coregular quivers with arbitrary dimension vector that has
been described earlier via reduction steps, can be described by not
containing a specific subquiver in the above sense.
\end{abstract}

\section{Introduction}

Representations of a quiver with a fixed dimension vector (quiver
setting) are parametrized by a vector space together with a linear
action of a product of general linear groups such that two point belong
to the same orbit if and only if the corresponding representations
are isomorphic. Therefore the quotient constructions of algebraic
geometry can be applied to give an approximation to the problem of
classification of isomorphism classes of representations. The simplest
quotient varieties, the so called affine quotients are defined in
terms of invariant polynomial functions on the representation spaces.
Although these affine quotient varieties turn out to reflect faithfully
the class of semisimple representations only, the study of them is
motivated by more sophisticated quotient constructions as well. Geometric
invariant theory has been applied by A. King \cite{King} to construct
non-trivial projective quotients (even in cases when the affine quotient
is a single point). It was shown by Adriaenssens and Le Bruyn \cite{ArdriaenssensandLeBruyn}
that the study of the local structure (say singularities) of these
projective quotients (moduli spaces) can be reduced to the study of
affine quotients of other quiver settings. (See \cite{Domokos-Smooth}
for an application illustrating the power of this method.) Therefore
the results on the affine quotient varieties of representation spaces
of quivers have relevance also for the study of more general moduli
spaces of quiver representations.

The class of coregular quiver settings (whose corresponding quotient
space is a smooth variety) has been described in \cite{Coregular}
by Bocklandt via establishing some reduction steps that simplify the
structure of the quiver without changing the singularities of the
corresponding variety. The proof for the main result in this paper
contains an error, but we will show in section 4 that it can be easily
fixed. The same method has been applied in \cite{CI} to describe
quiver settings with complete intersection quotient varieties (C.I.
quiver settings for short) in the special case when the quiver is
symmetric and has no loops. The general case however seem to be very
difficult to understand from this approach. In this paper we will
be concerned with the special case when the values of the dimension
vector are all one (but there is no restriction on the structure of
the quiver). Though this is a strong restriction from the point of
view of representation theory, it still covers a rather interesting
class, since the corresponding affine quotients are toric varieties.
The question when a toric variety is a complete intersection received
considerable attention in the literature, see for example \cite{Fisher}
or \cite{Gitler-Reyes-Villareal}. 

Our main results are contained in section 6, where we will establish
a new reduction-step for quivers with one dimensional vertices and
show that all C.I. quiver settings on at least two vertices can be
reduced. We will also introduce the notion of descendant, which is
similar to graph-theoretic minors (the difference is that only strongly
connected components can be contracted) and describe the class in
question as the quivers that do not contain certain forbidden descendants.
In section 7 we will show that the coregular quiver settings (with
arbitrary dimension vectors) can also be described by not containing
a single forbidden descendant. These results give some hope that a
simlar statement can be formulated in the general case for C.I. quiver
settings as well. 

Throughout this paper we will work over an algebraically closed field
of characteristic zero, which will be denoted it by $\mathbb{C}$.
This is convenient since we will use several results of Le Bruyn and
Procesi \cite{LeB-Proc}, and Raf Bocklandt \cite{CI,Coregular},
who worked with this assumption. However as it was shown by Domokos
and Zubkov in \cite{Domokos-Zubkov:charp} many of the results extend
to fields with positive characteristic as well. For example the classification
of quivers with genuine simple representation we will recall below,
holds over an arbitrary field.\medskip{}

\textbf{Acknowledgment:} The author would like to thank his supervisor
Mátyás Domokos for his invaluable help and guidance in his work.

\section{Preliminaries}

A \textit{quiver} $Q=(V,\: A,\: s,\: t)$ is a quadruple consisting
of a set of vertices $V$, a set of arrows $A$, and two maps $s,\: t:\: A\rightarrow V$
which assign to each arrow its starting and terminating vertex (loops
and multiple arrows are possible). A \textit{representation} $X$
of $Q$ is given by a vector space $X_{v}$ for each $v\in V$, and
a linear map $X_{\rho}:X_{s(\rho)}\rightarrow X_{t(\rho)}$ for each
$\rho\in A$. The dimension vector $\alpha:V\rightarrow\mathbb{N}$
of a representation $X$ is defined by $\alpha(v)=dim(X_{v})$. The
pair $(Q,\alpha)$ is called a \textit{quiver setting}, and $\alpha(v)$
is referred to as the dimension of the vertex $v$. A quiver setting
is called \textit{genuine} if no vertex has dimension zero. Let $Rep_{\alpha}Q$
denote the set of representations of $Q$ with dimension vector $\alpha$,
that is: 

\[
Rep_{\alpha}Q=\bigoplus_{\rho\in A}Mat_{\alpha(t(a))\times\alpha(s(a))}(\mathbb{C}).\]

On this space we have the action of the reductive linear group

\[
GL_{\alpha}:=\bigoplus_{v\in V}GL_{\alpha(v)}(\mathbb{C}).\]

by base change. For an element $g=(g_{v_{1}},...,g_{v_{n}})$ in $GL_{\alpha}$
and a representation $X\in Rep_{\alpha}Q$ :\[
X_{\rho}^{g}=g_{t(\rho)}X_{\rho}g_{s(\rho)}^{-1}.\]

The $GL_{\alpha}$ orbits in $Rep_{\alpha}Q$ under this action are
the isomorphism classes of representations. The algebraic quotient
$Rep_{\alpha}Q//GL_{\alpha}$ will be denoted by $iss_{\alpha}Q$.
The points of $iss_{\alpha}Q$ are in bijection with the closed orbits
under the $GL_{\alpha}$ action which correspond to equivalence classes
of semisimple representations (see \cite{LeB-Proc,Lectures}). The
coordinate ring $\mathbb{C}[iss_{\alpha}Q]$ of this variety is the
ring of invariant polynomials under the $GL_{\alpha}$ action. One
can assign an invariant polynomial to a cycle $c=(\rho_{1},...,\rho_{m})$
of the quiver by defining:\[
f_{c}:Rep_{\alpha}Q\rightarrow\mathbb{C}\quad X\rightarrow Tr(X_{\rho_{1}}...\, X{}_{\rho_{n}}).\]

A cycle is called \textit{primitive} if it does not run through any
vertex more then once and \textit{quasi-primitive} for a dimension
vector $\alpha$ if the vertices that are run through more than once
have dimension bigger than 1. If $c$ is not quasi-primitive then
for some cyclic permutation of its arrows $X_{\rho_{1}}...X_{\rho_{n}}$
will be a product of 1x1 matrices and $Tr(X_{\rho_{1}}...X_{\rho_{n}})$
will be the product of the traces of these matrices, so $f_{c}$ can
be written as a product of polynomials corresponding to quasi-primitive
cycles. We recall a result of LeBruyn and Procesi \cite{LeB-Proc}:
\begin{thm}
\label{thm:PrimcGenerate}$\mathbb{C}[iss_{\alpha}Q]$ is generated
by all $f_{c}$ where $c$ is a quasi-primitive cycle with length
smaller than $\sum_{i}\alpha_{i}^{2}+1$. We can turn $\mathbb{C}[iss_{\alpha}Q]$
into a graded ring by giving $f_{c}$ the length of its cycle as degree.
\end{thm}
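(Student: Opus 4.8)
The plan is to reduce the statement to the first fundamental theorem of invariant theory for the general linear group acting on tuples of matrices by simultaneous conjugation, and then to the Cayley--Hamilton (Razmyslov--Procesi) degree bound. First I would pass from $GL_{\alpha}$ to the full group $GL_{N}$ with $N=\sum_{i}\alpha_{i}$: writing $\mathbb{C}^{N}=\bigoplus_{v\in V}\mathbb{C}^{\alpha(v)}$ and letting $e_{v}$ denote the projection onto the $v$-th summand, one realizes each $X_{\rho}$ as the block endomorphism of $\mathbb{C}^{N}$ which is $X_{\rho}$ from the $s(\rho)$-block to the $t(\rho)$-block and zero elsewhere; then $GL_{\alpha}$ becomes exactly the centralizer in $GL_{N}$ of the system of idempotents $\{e_{v}\}$. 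A standard argument (as in \cite{LeB-Proc}) identifies $\mathbb{C}[iss_{\alpha}Q]=\mathbb{C}[Rep_{\alpha}Q]^{GL_{\alpha}}$ with the ring of $GL_{N}$-invariants of the tuple consisting of the $X_{\rho}$ together with the $e_{v}$, under simultaneous conjugation. By the first fundamental theorem for matrix invariants (Procesi, Sibirskii) this ring is generated by traces of cyclic words in the letters $X_{\rho}$ and $e_{v}$. Since the $e_{v}$ are orthogonal idempotents with $\sum_{v}e_{v}=1$, each such cyclic word is either zero or collapses, after deleting the redundant $e_{v}$'s and reading off the forced ones, to the trace of an honest oriented cycle in $Q$, i.e.\ to some $f_{c}$. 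This already shows that $\mathbb{C}[iss_{\alpha}Q]$ is generated by the $f_{c}$ with $c$ a cycle.

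Next I would cut these cycles down to quasi-primitive ones, which is precisely the observation recorded just before the statement: if $c$ is not quasi-primitive, then after a cyclic rotation of its arrows --- say so that $c$ is based at a vertex $w$ with $\alpha(w)=1$ through which $c$ passes again --- the matrix product $X_{\rho_{1}}\cdots X_{\rho_{m}}$ becomes a product of $1\times1$ matrices, so that $f_{c}$ factors as a product of the trace functions attached to the strictly shorter cycles obtained by cutting $c$ at its returns to such vertices. Inducting on the length, every $f_{c}$ becomes a polynomial in the $f_{c'}$ with $c'$ quasi-primitive.

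The remaining, and genuinely substantial, point is the length bound. Here I would invoke the second fundamental theorem for matrix invariants in the form of the multilinearized Cayley--Hamilton (fundamental trace) identity, which expresses the trace of a sufficiently long word in $n\times n$ matrices as a polynomial in traces of shorter words. The subtlety is that a naive application inside $GL_{N}$ yields only the crude bound $(\sum_{i}\alpha_{i})^{2}$, whereas the sharper bound $\sum_{i}\alpha_{i}^{2}$ in the statement reflects the fact that an oriented cycle based at a vertex $v$ is an endomorphism of the $\alpha(v)$-dimensional block $\mathbb{C}^{\alpha(v)}$, so that the trace identities should be applied block by block rather than globally. Carrying this out --- applying Cayley--Hamilton at each vertex while keeping track of the quiver combinatorics, and arranging the substitutions so that the words appearing on the right-hand side are again oriented cycles, to which the quasi-primitive factorization of the previous paragraph may be re-applied --- is the step I expect to be the main obstacle. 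Its outcome is that the $f_{c}$ with $c$ quasi-primitive of length smaller than $\sum_{i}\alpha_{i}^{2}+1$ already generate.

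Finally, the grading is automatic. Since $GL_{\alpha}$ acts linearly on $Rep_{\alpha}Q$, the standard grading of the polynomial ring $\mathbb{C}[Rep_{\alpha}Q]$ by total degree in the matrix entries restricts to a grading of the invariant subring $\mathbb{C}[iss_{\alpha}Q]$. As $f_{c}=Tr(X_{\rho_{1}}\cdots X_{\rho_{m}})$ is homogeneous of degree $m=\mathrm{length}(c)$ in these entries, declaring $\deg f_{c}=\mathrm{length}(c)$ is consistent with this grading, which is exactly the asserted graded ring structure.
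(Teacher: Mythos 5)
This theorem is not proved in the paper at all: it is quoted as a known result of Le~Bruyn and Procesi and the reader is sent to \cite{LeB-Proc}, so there is no in-paper argument to compare yours against, only the route of that reference --- which is exactly the route you outline. Your first two paragraphs are sound and standard: realizing $\mathbb{C}[iss_{\alpha}Q]$ as the $GL_{N}$-invariants of the arrows together with the orthogonal idempotents $e_{v}$, collapsing nonzero trace words to traces of oriented cycles via the first fundamental theorem, and factoring a non-quasi-primitive cycle at a one-dimensional vertex it revisits into a product of shorter trace functions (the paper itself records this last observation in the sentence preceding the theorem).

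The genuine gap is your third paragraph. The length bound is the only content of the theorem beyond the first fundamental theorem, and you explicitly defer it (``the step I expect to be the main obstacle'') and describe only its intended outcome; a plan to ``apply Cayley--Hamilton block by block'' is not yet an argument. What is actually needed is a short counting step followed by the Razmyslov--Procesi trace identity: if a quasi-primitive cycle $c$ has length exceeding $\sum_{v}\alpha(v)^{2}$, then since the length is $\sum_{v}m_{v}$ with $m_{v}$ the number of visits to $v$, some vertex $v$ has $m_{v}>\alpha(v)^{2}$; basing $c$ at $v$ writes $f_{c}$ as the trace of a word of length $m_{v}$ in the segment matrices, each an endomorphism of $\mathbb{C}^{\alpha(v)}$, and the fundamental trace identity for $\alpha(v)\times\alpha(v)$ matrices (multilinear, hence applicable to these specific substitutions) rewrites this trace as a polynomial in traces of words in strictly fewer segments, each of which is again an oriented cycle in $Q$ through $v$ of strictly smaller length; one then inducts on length, re-applying the quasi-primitive factorization as needed. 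Without this step your proposal establishes generation by the $f_{c}$ for $c$ quasi-primitive, but not the degree bound $\sum_{i}\alpha_{i}^{2}$, which is the part of the statement that requires the second fundamental theorem rather than the first. (The final paragraph on the grading is correct and needs nothing more.)
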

A quiver setting is called \textit{coregular }(resp.\textit{ a complete
intersection}) if $iss_{\alpha}Q$ is smooth (resp. complete intersection).
Noteworthily if $iss_{\alpha}Q$ is smooth then it is an affine space.
Bocklandt studied quiver settings with these two properties in \cite{Coregular}
and \cite{CI}. His method is based on establishing some reduction
steps that decrease the number of arrows and vertices in a quiver
setting $(Q,\alpha$), so that the ring of invariants of the new quiver
will only differ from $\mathbb{C}[iss_{\alpha}Q]$ in some free variables.
We recall these reduction steps. In the lemmas below $\epsilon_{v}$
denotes the dimension vector that is 1 in $v$ and 0 elsewhere. $\chi_{Q}$
denotes the Ringel form of a quiver which is defined as: \[
\chi_{Q}(\alpha,\mbox{\ensuremath{\beta})}=\sum_{v\in V}\alpha(v)\beta(v)-\sum_{\rho\in A}\alpha(s(\rho))\beta(t(\rho)).\]

\begin{lem}
\label{lem:RI}(Reduction RI: removing vertices) Suppose $(Q,\alpha)$
is a quiver setting and $v$ is a vertex without loops such that \[
\chi_{Q}(\alpha,\epsilon_{v})\geq0\quad\mbox{or}\quad\chi_{Q}(\epsilon_{v},\alpha)\geq0.\]
Construct a new quiver setting $(Q',\alpha')$ by removing the vertex
$v$ and all of the arrows $(a_{1},...,a_{l})$ pointing to $v$ and
the arrows $(b_{1},...,b_{k})$ coming from $v$, and adding a new
arrow $c_{ij}$ for each pair $(a_{i},b_{j})$ such that $s'(c_{ij})=s(a_{i})$
and $t'(c_{ij})=t(b_{j})$, as illustrated below:

\includegraphics[scale=0.7]{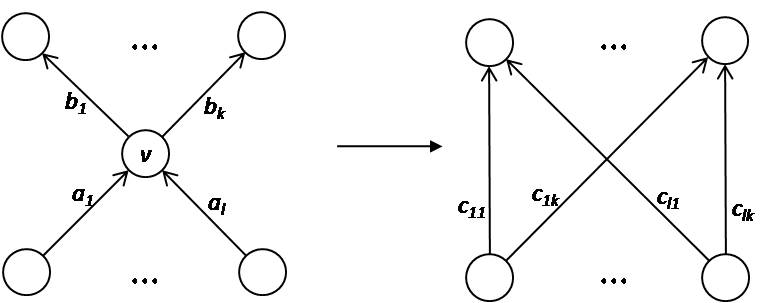} 

These two quiver settings now have isomorphic ring of invariants.
\end{lem}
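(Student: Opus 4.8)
The plan is to compute the invariant ring $\mathbb{C}[iss_{\alpha}Q]=\mathbb{C}[Rep_{\alpha}Q]^{GL_{\alpha}}$ by taking invariants in two stages: first with respect to the factor $GL_{\alpha(v)}$ sitting at $v$, and then with respect to $H:=\prod_{w\neq v}GL_{\alpha(w)}$. Since $v$ carries no loop, I would write $Rep_{\alpha}Q=V_{1}\oplus V_{2}$, where $V_{1}=\operatorname{Hom}(W,X_{v})\oplus\operatorname{Hom}(X_{v},U)$ collects the matrices of the arrows incident with $v$ --- here $W=\bigoplus_{i}X_{s(a_{i})}$ and $U=\bigoplus_{j}X_{t(b_{j})}$ --- and $V_{2}$ collects the remaining arrow spaces. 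The factor $GL_{\alpha(v)}$ acts only on $V_{1}$ and trivially on $V_{2}$, so $\mathbb{C}[Rep_{\alpha}Q]^{GL_{\alpha(v)}}=\mathbb{C}[V_{1}]^{GL_{\alpha(v)}}\otimes\mathbb{C}[V_{2}]$, and it remains to understand $\mathbb{C}[V_{1}]^{GL_{\alpha(v)}}$ as an $H$-algebra.

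Regarding $\operatorname{Hom}(W,X_{v})$ as $m:=\dim W$ copies of the standard $GL_{\alpha(v)}$-module and $\operatorname{Hom}(X_{v},U)$ as $n:=\dim U$ copies of its dual, the first fundamental theorem for $GL_{\alpha(v)}$ tells us that $\mathbb{C}[V_{1}]^{GL_{\alpha(v)}}$ is generated by the entries of the composite $NM\in\operatorname{Hom}(W,U)$ (with $M\colon W\to X_{v}$, $N\colon X_{v}\to U$), and the second fundamental theorem identifies it with the coordinate ring of the variety of maps $W\to U$ of rank at most $\alpha(v)$. Here is where the numerical hypothesis enters: a direct computation gives $\chi_{Q}(\alpha,\epsilon_{v})=\alpha(v)-m$ and $\chi_{Q}(\epsilon_{v},\alpha)=\alpha(v)-n$, so the assumption is exactly $\alpha(v)\geq\min(m,n)$; the rank constraint is then vacuous and $\mathbb{C}[V_{1}]^{GL_{\alpha(v)}}\cong\mathbb{C}[\operatorname{Hom}(W,U)]$ is a polynomial ring. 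I would then check that this isomorphism is $H$-equivariant: $M$ and $N$ carry the natural actions of the general linear groups at the vertices adjacent to $v$, hence so does $NM$, and this matches the $H$-action on $\operatorname{Hom}(W,U)$.

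Finally I would take $H$-invariants of $\mathbb{C}[\operatorname{Hom}(W,U)]\otimes\mathbb{C}[V_{2}]=\mathbb{C}[\operatorname{Hom}(W,U)\oplus V_{2}]$. The point is that $\operatorname{Hom}(W,U)\oplus V_{2}$, as an $H$-module, is canonically $Rep_{\alpha'}Q'$: the summand $V_{2}$ consists of the arrows of $Q$ missing $v$, which are kept in $Q'$, while $\operatorname{Hom}(W,U)=\bigoplus_{i,j}\operatorname{Hom}(X_{s(a_{i})},X_{t(b_{j})})$ is exactly the space of the new arrows $c_{ij}$, and $H=GL_{\alpha'}$. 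Hence
\[
\mathbb{C}[iss_{\alpha}Q]\cong\bigl(\mathbb{C}[\operatorname{Hom}(W,U)\oplus V_{2}]\bigr)^{H}=\mathbb{C}[Rep_{\alpha'}Q']^{GL_{\alpha'}}=\mathbb{C}[iss_{\alpha'}Q'].
\]

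The only real content is the invariant-theoretic step: invoking the first and second fundamental theorems for $GL_{\alpha(v)}$ in the right form and observing that the two inequalities in the hypothesis are precisely the two ways of forcing the $(\alpha(v)+1)$-minor relations to vanish. The remaining bookkeeping --- the splitting of $Rep_{\alpha}Q$, the two-step quotient, and the identification of the new arrows with $\operatorname{Hom}(W,U)$ --- is routine, though one must keep track of the fact that several of the $a_{i}$, $b_{j}$ may share endpoints (producing parallel arrows or loops in $Q'$), which the argument tolerates without change.
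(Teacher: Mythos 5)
The paper does not prove this lemma itself --- it is recalled verbatim from Bocklandt \cite{Coregular} --- and your argument is precisely the standard proof given there: split off the arrow spaces at $v$, apply the first and second fundamental theorems for $GL_{\alpha(v)}$ acting on $\operatorname{Hom}(W,X_v)\oplus\operatorname{Hom}(X_v,U)$, note that $\chi_Q(\alpha,\epsilon_v)\geq 0$ or $\chi_Q(\epsilon_v,\alpha)\geq 0$ kills the determinantal relations, and then take invariants for the remaining factors. Your proposal is correct, including the correct identification of the hypothesis with $\alpha(v)\geq\min(\dim W,\dim U)$ and the $H$-equivariant identification of $\operatorname{Hom}(W,U)\oplus V_2$ with $Rep_{\alpha'}Q'$.
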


\begin{lem}
\label{lem:RII}(Reduction RII: removing loops of dimension 1) Suppose
that $(Q,\lyxmathsym{\textgreek{a}})$ is a quiver setting and $v$
a vertex with $k$ loops and $\alpha(v)=1$. Take $Q'$ the corresponding
quiver without the loops of $v$, then the following identity holds:\textup{\[
\mathbb{C}[iss_{\alpha}Q]=\mathbb{C}[iss_{\alpha}Q']\otimes\mathbb{C}[X_{1},...,X_{k}],\]
} where $X_{i}$ are the polynomials that correspond to the loops
of $v$.
\end{lem}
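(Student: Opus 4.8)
The plan is to realise $Rep_\alpha Q$ as a product on which $GL_\alpha$ acts trivially in the loop directions, and then take invariants factor by factor. Since the arrows of $Q$ are exactly the arrows of $Q'$ together with the $k$ loops $\rho_1,\dots,\rho_k$ at $v$, and each loop contributes a block $Mat_{\alpha(v)\times\alpha(v)}(\mathbb{C}) = Mat_{1\times 1}(\mathbb{C}) = \mathbb{C}$ because $\alpha(v)=1$, we get a decomposition
\[
Rep_\alpha Q \;=\; Rep_\alpha Q' \;\oplus\; \mathbb{C}^{k}.
\]
The first step is to check that this is $GL_\alpha$-equivariant with trivial action on the $\mathbb{C}^{k}$ summand: for $g=(g_w)_{w\in V}\in GL_\alpha$ and a loop $\rho_i$ at $v$ the transformation rule gives $X_{\rho_i}^g = g_v X_{\rho_i} g_v^{-1}$, and since $g_v\in GL_1(\mathbb{C})$ and $X_{\rho_i}$ is a single scalar, conjugation fixes it; no other $g_w$ enters because $s(\rho_i)=t(\rho_i)=v$. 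Hence $GL_\alpha$ preserves the decomposition and acts as the identity on $\mathbb{C}^{k}$.

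The second step is routine invariant theory. Passing to coordinate rings, $\mathbb{C}[Rep_\alpha Q] = \mathbb{C}[Rep_\alpha Q']\otimes\mathbb{C}[X_1,\dots,X_k]$, where $X_i$ is the linear coordinate reading off the single entry of $X_{\rho_i}$, i.e. exactly the invariant $f_{(\rho_i)}$ attached to the one-arrow cycle $\rho_i$. Because the $X_i$ are $GL_\alpha$-invariant and $GL_\alpha$ is reductive acting over $\mathbb{C}$, taking invariants is compatible with this tensor factorisation (apply the Reynolds operator, which only sees the $Rep_\alpha Q'$ factor), so
\[
\mathbb{C}[iss_\alpha Q] = \mathbb{C}[Rep_\alpha Q]^{GL_\alpha} = \mathbb{C}[Rep_\alpha Q']^{GL_\alpha}\otimes\mathbb{C}[X_1,\dots,X_k] = \mathbb{C}[iss_\alpha Q']\otimes\mathbb{C}[X_1,\dots,X_k],
\]
which is the asserted identity (and it respects the grading of Theorem \ref{thm:PrimcGenerate} once each $X_i$ is put in degree $1$).

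Alternatively one can argue with the explicit generators from Theorem \ref{thm:PrimcGenerate}: a quasi-primitive cycle of $Q$ that uses a loop at $v$ must consist of that single loop, since any longer closed walk through the loop passes through $v$ at least twice and so is not quasi-primitive when $\alpha(v)=1$; every remaining quasi-primitive cycle of $Q$ is one of $Q'$ and conversely, and the length bound $\sum_i\alpha_i^2+1$ is unchanged. So the generators split into those of $\mathbb{C}[iss_\alpha Q']$ and the free variables $X_1,\dots,X_k$, with no mixed relations. I do not expect a genuine obstacle here: the whole content is the equivariant splitting, and the one point that must not be skipped is the verification that conjugation by $GL_1$ is trivial and that no group factor besides the one at $v$ acts on the loop coordinates, which makes the product decomposition $GL_\alpha$-equivariant with trivial action on the loop part. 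Once that is in place the result follows from standard facts about invariants of a product.
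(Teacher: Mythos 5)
Your proof is correct: the key observation that a loop at a vertex with $\alpha(v)=1$ contributes a $1\times 1$ block on which $GL_\alpha$ acts by conjugation with a scalar, hence trivially, gives the equivariant splitting $Rep_\alpha Q=Rep_\alpha Q'\oplus\mathbb{C}^k$ and the tensor decomposition of invariants follows. The paper only recalls this lemma from Bocklandt's work without proof, and your argument is exactly the standard one behind it, so there is nothing to add.
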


\begin{lem}
(Reduction RIII: removing a loop of higher dimension). Suppose $(Q,\lyxmathsym{\textgreek{a}})$
is a quiver setting and $v$ is a vertex of dimension $k\geq2$ with
one loop such that \[
\chi_{Q}(\epsilon_{v},\alpha)=-1\quad\mbox{or}\quad\chi_{Q}(\alpha,\epsilon_{v})=-1.\]
(In other words, aside of the loop, there is either a single arrow
leaving $v$ and it points to a vertex with dimension 1, or there
is a single arrow pointing to $v$ and it comes from a vertex with
dimension 1). Construct a new quiver setting $(Q',\alpha')$ by changing
$(Q,\lyxmathsym{\textgreek{a}})$:

\includegraphics[scale=0.7]{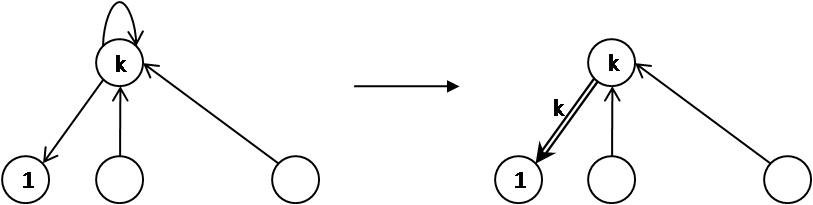}

\includegraphics[scale=0.7]{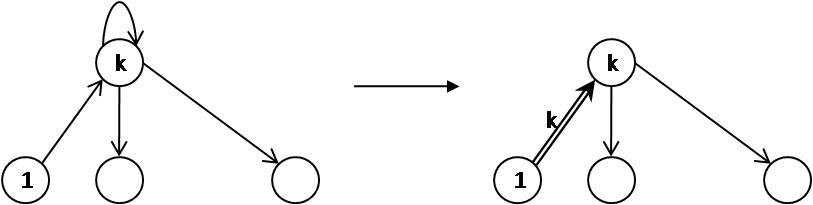}\\
We have the following identity:\textup{\[
\mathbb{C}[iss_{\alpha}Q]=\mathbb{C}[iss_{\alpha}Q']\otimes\mathbb{C}[X_{1},...,X_{k}].\]
}
\end{lem}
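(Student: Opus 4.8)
The plan is to exhibit $\mathbb{C}[iss_{\alpha}Q]$ explicitly as a polynomial ring over $\mathbb{C}[iss_{\alpha'}Q']$ in the $k$ power sums of the loop. First I would pass to the opposite quiver if necessary (transposition gives a $GL_{\alpha}$-equivariant isomorphism $Rep_{\alpha}Q^{\mathrm{op}}\cong Rep_{\alpha}Q$ and hence $\mathbb{C}[iss_{\alpha}Q^{\mathrm{op}}]\cong\mathbb{C}[iss_{\alpha}Q]$), so it suffices to treat the case $\chi_{Q}(\epsilon_{v},\alpha)=-1$: besides the loop $\ell$ at $v$, there is then exactly one arrow $a$ leaving $v$, terminating at a vertex $u$ with $\alpha(u)=1$, while the arrows $b_{1},\dots,b_{m}$ entering $v$ (with $w_{i}:=s(b_{i})\neq v$) are unrestricted. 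The quiver $Q'$ is obtained by deleting $v,\ell,a$ and the $b_{i}$ and adding, for each $i$ and each $0\le n\le k-1$, an arrow $c_{i,n}\colon w_{i}\to u$, with $\alpha'=\alpha$ on the surviving vertices. The basic object is the morphism $\phi\colon Rep_{\alpha}Q\to Rep_{\alpha'}Q'$ that fixes all arrows not incident to $v$ and sends $X$ to the representation with $X_{c_{i,n}}=X_{a}X_{\ell}^{\,n}X_{b_{i}}$. One checks directly that $\phi$ is equivariant for the projection $GL_{\alpha}\to GL_{\alpha'}$ forgetting the $GL_{\alpha(v)}$-factor, and that $\phi$ is surjective: given a target, pick $X_{a},X_{\ell}$ so that the $k\times k$ matrix with rows $X_{a},X_{a}X_{\ell},\dots,X_{a}X_{\ell}^{\,k-1}$ is invertible and solve uniquely for the $X_{b_{i}}$. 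Consequently $\phi^{\ast}\colon\mathbb{C}[iss_{\alpha'}Q']\to\mathbb{C}[iss_{\alpha}Q]$ is an injective ring homomorphism, and $\mathrm{Tr}(X_{\ell}),\dots,\mathrm{Tr}(X_{\ell}^{k})$ are further elements of $\mathbb{C}[iss_{\alpha}Q]$.

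It then remains to show three things: (i) $\mathbb{C}[iss_{\alpha}Q]$ is generated by $\phi^{\ast}\mathbb{C}[iss_{\alpha'}Q']$ together with $\mathrm{Tr}(X_{\ell}),\dots,\mathrm{Tr}(X_{\ell}^{k})$; (ii) these $k$ elements are algebraically independent over $\phi^{\ast}\mathbb{C}[iss_{\alpha'}Q']$; (iii) then the surjection $\mathbb{C}[iss_{\alpha'}Q']\otimes\mathbb{C}[X_{1},\dots,X_{k}]\to\mathbb{C}[iss_{\alpha}Q]$, $X_{i}\mapsto\mathrm{Tr}(X_{\ell}^{i})$, has trivial kernel, which is formal given (i), (ii) and the injectivity of $\phi^{\ast}$. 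For (ii) I would restrict to a generic fibre of $\phi$: over such a fibre the $X_{b_{i}}$ are determined by the pair $(X_{a},X_{\ell})$ while $X_{\ell}$ still sweeps out a dense subset of $\mathrm{Mat}_{k}(\mathbb{C})$, so the power sums $\mathrm{Tr}(X_{\ell}^{j})$ are algebraically independent on the fibre whereas every element of $\phi^{\ast}\mathbb{C}[iss_{\alpha'}Q']$ is constant there; a relation over $\phi^{\ast}\mathbb{C}[iss_{\alpha'}Q']$ would then force all its coefficients to vanish.

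The heart of the proof is (i). By Theorem~\ref{thm:PrimcGenerate} it is enough to check that $f_{c}\in R:=\phi^{\ast}\mathbb{C}[iss_{\alpha'}Q'][\mathrm{Tr}(X_{\ell}),\dots,\mathrm{Tr}(X_{\ell}^{k})]$ for every quasi-primitive cycle $c$. If $c$ avoids $v$ it is a cycle of $Q'$ and $f_{c}$ is the $\phi$-pullback of the corresponding invariant of $Q'$, so $f_{c}\in R$. If $c=\ell^{\,p}$ is a power of the loop then $f_{c}=\mathrm{Tr}(X_{\ell}^{p})$ lies in $\mathbb{C}[\mathrm{Tr}(X_{\ell}),\dots,\mathrm{Tr}(X_{\ell}^{k})]$ by iterated use of Cayley--Hamilton (Newton's identities expressing the characteristic coefficients through the power sums). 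Otherwise $c$ meets $v$; since the arrows incident to $v$ are exactly $\ell,a,b_{1},\dots,b_{m}$ and $a$ is the only non-loop arrow leaving $v$, the cyclic word of $c$ decomposes into blocks of the form $a\,P_{t}\,b_{i_{t}}\,\ell^{\,n_{t}}$, where $P_{t}$ is a path in $Q\setminus\{v\}$ from $u$ to $w_{i_{t}}$ and $n_{t}\ge 0$. Because $a$ lands in the one-dimensional space $X_{u}$, each matrix $X_{b_{i_{t}}}X_{P_{t}}X_{a}$ is of rank at most one; collapsing these rank-one factors in the trace yields $f_{c}=\prod_{t}X_{a}X_{\ell}^{\,n_{t}}X_{b_{i_{t}}}X_{P_{t}}$, a product of scalars. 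Finally, reducing $X_{\ell}^{\,n_{t}}$ modulo Cayley--Hamilton writes $X_{a}X_{\ell}^{\,n_{t}}X_{b_{i_{t}}}$ as a $\mathbb{C}[\mathrm{Tr}(X_{\ell}),\dots,\mathrm{Tr}(X_{\ell}^{k})]$-combination of the matrices $X_{c_{i_{t},j}}$, $0\le j\le k-1$, so each factor $X_{a}X_{\ell}^{\,n_{t}}X_{b_{i_{t}}}X_{P_{t}}$ is such a combination of the $\phi$-pullbacks of the traces of the cycles $c_{i_{t},j}P_{t}$ of $Q'$; hence $f_{c}\in R$.

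I expect the bookkeeping in the last paragraph to be the main obstacle: writing down the block decomposition of an arbitrary quasi-primitive cycle through $v$, carrying out the rank-one collapse with the correct cyclic ordering, and checking that after the Cayley--Hamilton reduction every surviving factor is genuinely a trace of a cycle of $Q'$. Everything else is either formal or a short genericity argument. One should also verify that the degenerate configurations ($u$ equal to some $w_{i}$, several $b_{i}$ with a common source, $m=0$, or trivial $P_{t}$) cause no difficulty, and that $\alpha'$ is still a genuine dimension vector so that $Q'$ is a legitimate quiver setting.
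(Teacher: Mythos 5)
The paper does not actually prove this lemma: it is recalled verbatim from Bocklandt's \cite{Coregular} as a known reduction step, so there is no in-paper proof to compare against. Judged on its own, your argument is the standard one and is sound in substance, but there is one genuine mismatch you must repair. The quiver $Q'$ of the lemma (this is what the figures depict, following Bocklandt) \emph{retains} the vertex $v$, now with dimension $k$ and no loop, and replaces the single arrow $a\colon v\to u$ by $k$ parallel arrows from $v$ to $u$; it does not delete $v$. What you construct is instead the quiver obtained from that $Q'$ by applying RI at $v$ (legal there, since $v$ has no loop and $\chi_{Q'}(\epsilon_{v},\alpha')=k-k\cdot 1=0$), which composes each $b_{i}$ with each of the $k$ copies of $a$ into exactly your arrows $c_{i,n}$. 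So your argument establishes $\mathbb{C}[iss_{\alpha}Q]\cong\mathbb{C}[iss_{\alpha''}Q'']\otimes\mathbb{C}[X_{1},\dots,X_{k}]$ for your quiver $Q''$, and you need one further invocation of Lemma \ref{lem:RI} to identify $\mathbb{C}[iss_{\alpha''}Q'']$ with $\mathbb{C}[iss_{\alpha'}Q']$. That identification is available, so the fix is one line, but as written you are proving a variant of the stated lemma rather than the lemma itself.

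The bookkeeping you flag as the main obstacle in step (i) is in fact easier than you fear, precisely because of quasi-primitivity: since $\alpha(u)=1$, a quasi-primitive cycle may visit $u$ at most once, and every excursion of a cycle away from $v$ begins with $a$ and hence lands at $u$; therefore a quasi-primitive cycle through $v$ that is not a power of $\ell$ consists of exactly one block $a\,P\,b_{i}\,\ell^{\,n}$, and your product over $t$ has a single factor. (Your multi-block computation is still correct and does no harm; it is just not needed for the generating set supplied by Theorem \ref{thm:PrimcGenerate}.) The remaining ingredients --- equivariance and surjectivity of $\phi$, hence injectivity of $\phi^{\ast}$; the rank-one collapse of the trace through the one-dimensional space $X_{u}$; the Cayley--Hamilton/Newton reduction of $X_{\ell}^{\,n}$ for $n\ge k$; and the generic-fibre argument for the algebraic independence of $\mathrm{Tr}(X_{\ell}),\dots,\mathrm{Tr}(X_{\ell}^{k})$ over $\phi^{\ast}\mathbb{C}[iss_{\alpha''}Q'']$, using that a cyclic covector for $X_{\ell}$ exists on the dense set of non-derogatory matrices --- are all correct, and the degenerate configurations you list ($u=w_{i}$, $m=0$, trivial $P$) indeed cause no trouble.
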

Quiver settings on which none of the above reduction steps can be
applied are called \textit{reduced.} Next we recall \cite[Lemma 2.4]{Coregular}
and \cite[Lemma 3.2]{CI} that show us that it is satisfactory to
consider quivers with certain connectedness properties. 
\begin{lem}
\label{lem:sc}If $(Q,\alpha)$ is a quiver setting then \[
\mathbb{C}[iss_{\alpha}Q]=\bigotimes_{i}\mathbb{C}[iss_{\alpha_{i}}Q_{i}],\]
where $Q_{i}$ are the strongly connected components of $Q$ and $\alpha_{i}=\alpha|_{Q_{i}}$.
\end{lem}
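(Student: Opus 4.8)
The plan is to combine Theorem~\ref{thm:PrimcGenerate} with the trivial observation that a cycle never leaves a strongly connected component. Let $V=\bigsqcup_i V_i$ be the partition of the vertex set into the vertex sets of the components $Q_i$, and call an arrow $\rho$ of $Q$ a \emph{bridge} if $s(\rho)$ and $t(\rho)$ lie in different components; then every arrow of $Q$ is either a bridge or an arrow of a unique $Q_i$, and accordingly
\[
Rep_{\alpha}Q=\Bigl(\bigoplus_i Rep_{\alpha_i}Q_i\Bigr)\oplus W ,
\]
where $W$ collects the coordinates of the bridge arrows. Since $GL_{\alpha}=\prod_{v\in V}GL_{\alpha(v)}=\prod_i GL_{\alpha_i}$ and the factors $GL_{\alpha_j}$ with $j\neq i$ act trivially on $Rep_{\alpha_i}Q_i$, each projection $\pi_i\colon Rep_{\alpha}Q\to Rep_{\alpha_i}Q_i$ is $GL_{\alpha}$-equivariant, so $\pi_i^{*}$ realizes $\mathbb{C}[iss_{\alpha_i}Q_i]$ as a subalgebra of $\mathbb{C}[iss_{\alpha}Q]$. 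Observe also that the polynomial ring $\mathbb{C}[Rep_{\alpha}Q]$ has its variables partitioned into the blocks attached to the various $Q_i$ together with the bridge variables, and $\pi_i^{*}\mathbb{C}[Rep_{\alpha_i}Q_i]$ is exactly the polynomial subring on the $i$-th block.

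I would first check surjectivity of the natural multiplication map $\mu\colon\bigotimes_i\mathbb{C}[iss_{\alpha_i}Q_i]\to\mathbb{C}[iss_{\alpha}Q]$. By Theorem~\ref{thm:PrimcGenerate} the target is generated by the functions $f_c$ attached to quasi-primitive cycles $c=(\rho_1,\dots,\rho_m)$. For such a cycle $t(\rho_j)=s(\rho_{j+1})$ for all $j$ modulo $m$, so the vertices $s(\rho_1),\dots,s(\rho_m)$ all lie on one closed walk and hence belong to a single $V_i$; therefore every $\rho_j$ is an arrow of $Q_i$, no bridge variable occurs in $f_c$, and $f_c=\pi_i^{*}(g)$ where $g\in\mathbb{C}[iss_{\alpha_i}Q_i]$ is the corresponding trace function computed inside $Q_i$. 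Thus every generator of $\mathbb{C}[iss_{\alpha}Q]$ lies in the subalgebra generated by the $\pi_i^{*}\mathbb{C}[iss_{\alpha_i}Q_i]$, i.e. in the image of $\mu$.

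For injectivity of $\mu$ I would argue formally. The multiplication map $\bigotimes_i\mathbb{C}[Rep_{\alpha_i}Q_i]\to\mathbb{C}[Rep_{\alpha}Q]$ is injective because the $\pi_i^{*}\mathbb{C}[Rep_{\alpha_i}Q_i]$ are polynomial subrings on disjoint blocks of variables. Restricting each tensor factor to its invariant subalgebra only shrinks the source and preserves injectivity (a tensor product over a field of injective linear maps is injective), and the resulting composite $\bigotimes_i\mathbb{C}[iss_{\alpha_i}Q_i]\to\mathbb{C}[Rep_{\alpha}Q]$ is precisely $\mu$ followed by the inclusion $\mathbb{C}[iss_{\alpha}Q]\hookrightarrow\mathbb{C}[Rep_{\alpha}Q]$; hence $\mu$ is injective. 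Being both injective and surjective, $\mu$ is the desired isomorphism.

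There is no real obstacle here; the only thing requiring a moment's care is the combinatorial point that a \emph{quasi}-primitive cycle, even though it may run through a vertex several times, still has all of its arrows inside one strongly connected component. The rest is the standard fact that a partition of the arrow set of a quiver induces a partition of the variables of the polynomial coordinate ring of its representation space, which makes the independence of the subalgebras automatic and lets the tensor-product decomposition drop out.
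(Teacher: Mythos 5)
Your proof is correct and follows essentially the same route the paper intends (it only cites Bocklandt's Lemma 2.4 for this statement, but the surrounding discussion makes clear the argument is exactly yours): the arrows of any quasi-primitive cycle lie in a single strongly connected component, so by Theorem~\ref{thm:PrimcGenerate} the generators $f_c$ split among the subalgebras $\pi_i^{*}\mathbb{C}[iss_{\alpha_i}Q_i]$, and independence follows from these living on disjoint blocks of variables of $\mathbb{C}[Rep_{\alpha}Q]$. Nothing is missing.
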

If a quiver can be decomposed into subquivers that have no arrows
running in between them and only intersect each other in vertices
of dimension one, then it is easy to see that every quasi-primitive
cycle has to run inside one of these subquivers. This inspires the
following definition:
\begin{defn}
A quiver $Q=(V,A,s,t)$ is said to be the \textit{connected sum} of
2 subquivers $Q_{1}=(V_{1},A_{1},s_{1},t_{1})$ and $Q_{2}=(V_{2},A_{2},s_{2},t_{2})$
at the vertex $v$, if the two subquivers make up the whole quiver
and only intersect in the vertex $v$. So in symbols $V=V_{1}\cup V_{2}$,
$A=A_{1}\cup A_{2}$, $V_{1}\cap V_{2}=v$, and $A_{1}\cap A_{2}=\{\emptyset\}$.
We note this by $Q=Q_{1}\#_{v}Q_{2}$. The connected sum of three
or more quivers can be defined similarly, for sake of simplicity we
will write $Q_{1}\#_{v}Q_{2}\#_{w}Q_{3}$ instead of $(Q_{1}\#_{v}Q_{2})\#_{w}Q_{3}$.
A quiver setting is called \textit{prime} if it can not be written
as a non-trivial connected sum in vertices of dimension one.
\end{defn}
Since the ring of invariants is generated by the polynomials associated
to quasi-primitive cycles, a similar result to the above can be said
about connected sums in vertices of dimension one:
\begin{lem}
\label{lem:connectedsum}Suppose $Q=Q_{1}\#_{v}Q_{2}$ $\alpha(v)=1$,
then \[
\mathbb{C}[iss_{\alpha}Q]=\mathbb{C}[iss_{\alpha_{1}}Q_{1}]\otimes\mathbb{C}[iss_{\alpha_{2}}Q_{2}],\]

where $\alpha_{1}=\alpha|_{Q_{1}}$, $\alpha_{2}=\alpha|_{Q_{2}}$.
\end{lem}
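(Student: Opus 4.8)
The plan is to derive the statement from Theorem~\ref{thm:PrimcGenerate} together with the elementary observation that a quasi-primitive cycle cannot cross a dimension-one cut vertex. Write $G_i=\prod_{w\in V_i\setminus\{v\}}GL_{\alpha(w)}$, so that $GL_\alpha=G_1\times GL_{\alpha(v)}\times G_2$, while $GL_{\alpha_1}=G_1\times GL_{\alpha(v)}$ and $GL_{\alpha_2}=GL_{\alpha(v)}\times G_2$; here $GL_{\alpha(v)}$ is a one-dimensional torus since $\alpha(v)=1$. Because $A_1\cap A_2=\emptyset$ and $V_1\cap V_2=\{v\}$, the representation space is a direct product $Rep_\alpha Q=Rep_{\alpha_1}Q_1\times Rep_{\alpha_2}Q_2$ on which $G_1$ acts only on the first factor, $G_2$ only on the second, and $GL_{\alpha(v)}$ diagonally on both. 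Denoting by $p_i\colon Rep_\alpha Q\to Rep_{\alpha_i}Q_i$ the projections, we correspondingly have $\mathbb{C}[Rep_\alpha Q]=\mathbb{C}[Rep_{\alpha_1}Q_1]\otimes_{\mathbb{C}}\mathbb{C}[Rep_{\alpha_2}Q_2]$ with $p_1^*a=a\otimes 1$ and $p_2^*b=1\otimes b$.

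First I would show that $\mathbb{C}[iss_\alpha Q]$ coincides with the subalgebra $R\subseteq\mathbb{C}[Rep_\alpha Q]$ generated by $p_1^*\mathbb{C}[iss_{\alpha_1}Q_1]$ and $p_2^*\mathbb{C}[iss_{\alpha_2}Q_2]$. The inclusion $R\subseteq\mathbb{C}[iss_\alpha Q]$ is immediate: if $h\in\mathbb{C}[iss_{\alpha_1}Q_1]$ then $p_1^*h$ is $G_1$- and $GL_{\alpha(v)}$-invariant because $h$ is, and $G_2$-invariant because $G_2$ acts trivially on $Rep_{\alpha_1}Q_1$; the same holds for $p_2^*$, and a product of invariants is invariant. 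For the reverse inclusion I would use Theorem~\ref{thm:PrimcGenerate}: $\mathbb{C}[iss_\alpha Q]$ is generated by the $f_c$ with $c$ a quasi-primitive cycle. Since $Q_1$ and $Q_2$ share only the vertex $v$ and no arrows, any cycle of $Q$ visiting both a vertex of $V_1\setminus\{v\}$ and a vertex of $V_2\setminus\{v\}$ must pass through $v$ at least twice, so as $\alpha(v)=1$ it is not quasi-primitive; hence every quasi-primitive cycle of $Q$ runs entirely inside $Q_1$ or entirely inside $Q_2$. For such a $c$, say inside $Q_1$, the polynomial $f_c$ depends only on the $A_1$-components and equals $p_1^*g$ for the corresponding trace invariant $g$ on $Rep_{\alpha_1}Q_1$, which is $GL_{\alpha_1}$-invariant and hence lies in $\mathbb{C}[iss_{\alpha_1}Q_1]$. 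Thus all generators of $\mathbb{C}[iss_\alpha Q]$ lie in $R$, so $\mathbb{C}[iss_\alpha Q]=R$.

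It remains to identify $R$ with the tensor product. The multiplication map $\mu\colon\mathbb{C}[iss_{\alpha_1}Q_1]\otimes_{\mathbb{C}}\mathbb{C}[iss_{\alpha_2}Q_2]\to\mathbb{C}[Rep_\alpha Q]$ given by $a\otimes b\mapsto(p_1^*a)(p_2^*b)$ has image $R$ by construction, and under the identification $\mathbb{C}[Rep_\alpha Q]=\mathbb{C}[Rep_{\alpha_1}Q_1]\otimes_{\mathbb{C}}\mathbb{C}[Rep_{\alpha_2}Q_2]$ it is simply the tensor product of the two (injective) inclusions $\mathbb{C}[iss_{\alpha_i}Q_i]\hookrightarrow\mathbb{C}[Rep_{\alpha_i}Q_i]$. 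Since tensoring over the field $\mathbb{C}$ preserves injectivity, $\mu$ is injective, hence an isomorphism onto $R=\mathbb{C}[iss_\alpha Q]$, which is the claim.

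There is no genuine obstacle, but the one point that really uses Theorem~\ref{thm:PrimcGenerate}, and the trap to avoid, is the inclusion $\mathbb{C}[iss_\alpha Q]\subseteq R$: one cannot compute $\mathbb{C}[Rep_\alpha Q]^{GL_\alpha}$ by taking invariants under $G_1$, $GL_{\alpha(v)}$ and $G_2$ one at a time, because the torus $GL_{\alpha(v)}\cong\mathbb{C}^{*}$ at the gluing vertex acts diagonally, and for a diagonal torus action $(A\otimes B)^{\mathbb{C}^{*}}$ is in general strictly larger than $A^{\mathbb{C}^{*}}\otimes B^{\mathbb{C}^{*}}$. It is precisely because the invariant ring is spanned by traces of quasi-primitive cycles, none of which can mix the two sides of the cut, that no such extra ``mixed'' invariants occur and the tensor decomposition holds.
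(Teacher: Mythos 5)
Your proof is correct and follows essentially the same route as the paper, which only sketches the argument: the paper likewise observes that a quasi-primitive cycle cannot pass through the dimension-one cut vertex twice and hence lies entirely in $Q_1$ or $Q_2$, and then invokes Theorem~\ref{thm:PrimcGenerate}. Your additional care in checking that the multiplication map onto $R$ is injective (so that $R$ really is the tensor product, not just a quotient of it) fills in a detail the paper leaves implicit.
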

We can conclude that it is satisfactory to classify coregular or C.I.
quiver settings that are prime and strongly connected. The main result
of \cite{Coregular} is the following theorem:
\begin{thm}
\label{thm:coregular}Let $(Q,\alpha)$ be a genuine strongly connected
reduced quiver setting. Then $(Q,\alpha)$ is coregular if and only
if it is one of the three quiver settings below:

\includegraphics[scale=0.7]{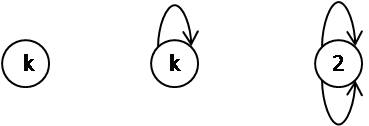}
\end{thm}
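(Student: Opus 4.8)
The plan is to reduce the classification to a finite, tractable combinatorial search by exploiting the reduction steps RI--RIII together with the prime/strongly-connected normalization. First I would argue that it suffices to treat genuine, strongly connected, reduced, prime quiver settings: Lemma~\ref{lem:sc} handles the passage to strongly connected components, Lemma~\ref{lem:connectedsum} handles the decomposition into primes at one-dimensional vertices, and the definition of \emph{reduced} means no RI, RII or RIII applies. Since the three reductions preserve the isomorphism class of $\mathbb{C}[iss_{\alpha}Q]$ up to a polynomial ring factor (and a polynomial ring factor changes neither smoothness nor being a complete intersection), $(Q,\alpha)$ is coregular if and only if its reduced form is.

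Next I would extract strong structural constraints from the hypotheses. Because RII and RIII are unavailable, a vertex of dimension $1$ carries no loop, and a vertex of dimension $\geq 2$ either has no loop or has a loop but then $\chi_Q(\epsilon_v,\alpha)\neq -1$ and $\chi_Q(\alpha,\epsilon_v)\neq -1$. Because RI is unavailable, \emph{every} vertex $v$ without a loop satisfies $\chi_Q(\alpha,\epsilon_v)<0$ and $\chi_Q(\epsilon_v,\alpha)<0$; unwinding the definition of the Ringel form, this says that at each such $v$ the weighted in-degree $\sum_{t(\rho)=v}\alpha(s(\rho))$ and the weighted out-degree $\sum_{s(\rho)=v}\alpha(t(\rho))$ both strictly exceed $\alpha(v)$. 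Combined with strong connectivity this forces the quiver to be quite dense relative to its size.

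The heart of the argument is then a dimension count via Theorem~\ref{thm:PrimcGenerate}. The coordinate ring $\mathbb{C}[iss_{\alpha}Q]$ is generated by the finitely many $f_c$ coming from quasi-primitive cycles, so its Krull dimension is at most that number of generators; on the other hand the dimension of $iss_\alpha Q$ equals $\dim Rep_\alpha Q - \dim GL_\alpha + (\text{dimension of a generic stabilizer})$, which for a strongly connected genuine setting one can compute as $1-\chi_Q(\alpha,\alpha)$ plus a correction, and is expressible purely in terms of the arrow multiplicities and the $\alpha(v)$. Coregularity forces $iss_\alpha Q$ to be an affine space, so its dimension equals the (minimal) number of generators; I would turn this equality, together with the degree grading, into an inequality that bounds the total number of arrows and the dimensions $\alpha(v)$ from above. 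This should leave only finitely many candidate $(Q,\alpha)$, among which the three pictured settings are verified directly (e.g.\ by exhibiting explicit polynomial generators of $\mathbb{C}[iss_\alpha Q]$ that are algebraically independent), and all others are eliminated by producing a relation among the generators or an explicit singular point.

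The main obstacle I anticipate is the bookkeeping in the dimension/generator count: getting a clean, usable lower bound on the number of algebraically independent generators (equivalently, a sharp handle on when the quasi-primitive cycle polynomials fail to be independent) is delicate, because short cycles can be algebraically dependent in non-obvious ways when vertices have dimension one. I expect the argument to hinge on a careful analysis of the cycle space of the underlying graph and of how many independent invariants the trace polynomials of length up to $\sum_i\alpha_i^2$ actually produce, and on ruling out, case by case, the borderline configurations where the naive count is tight but the ring is nonetheless singular. Once that is in place, the remaining case checking is routine.
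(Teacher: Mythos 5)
Your normalization step (strongly connected components, prime decomposition, reducedness) and the structural constraints you extract from the failure of RI--RIII are fine and match the paper's setup. The gap is at the heart of your argument: the claim that equating $\dim iss_\alpha Q$ with the number of quasi-primitive cycle generators ``should leave only finitely many candidate $(Q,\alpha)$'' is not true. The class of genuine, strongly connected, reduced quiver settings is infinite --- already for $\alpha\equiv 1$ it contains every loop-free strongly connected quiver in which each vertex has in- and out-degree at least $2$ --- and the theorem asserts that all but three of these infinitely many settings fail to be coregular. No upper bound on the number of arrows or on the $\alpha(v)$ follows from the equality ``dimension $=$ number of generators,'' because both quantities grow together; so there is no finite search to perform, and one genuinely needs an inductive or structural argument that kills every member of an infinite family. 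That argument is exactly the part you defer to the end (``once that is in place, the remaining case checking is routine''), and it is where all the content of the theorem lives.

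The paper (following Bocklandt) handles this by induction on the number of vertices using \emph{local quivers} (Theorem~\ref{thm:local}): a vertex of maximal dimension $\geq 2$ is dealt with by passing to the two-vertex local quiver of the decomposition $(\alpha-\epsilon_v)+\epsilon_v$, and in the all-ones case one glues the vertices of a short primitive cycle and applies an inductive hypothesis (Lemma~\ref{lem:coregularcorrection} and its strengthened form $(*)$, which allows one exceptional vertex of small degree created by the gluing). Your proposal never invokes local quivers, and without them there is no mechanism for reducing a large reduced setting to a smaller non-coregular one. Note also that the cycle-counting you propose is precisely where Bocklandt's original proof went wrong --- removing or contracting a vertex can change the number of primitive cycles by an arbitrarily large amount, as the counterexample in Section~4 of the paper shows --- so the ``delicate bookkeeping'' you flag at the end is not a technicality but the actual obstruction that your outline does not overcome.
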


In \cite{CI} a similar result is formulated, giving a list of the
symmetric prime reduced quiver settings without loops that are C.I..
The author however makes a remark that finding such a list in the
general case is hopeless, and indeed we found that it would be a very
difficult task even in the $\alpha\equiv1$ special case which we
will investigate below. 

The proof of the above theorem uses two techniques to show that a
quiver with a complicated structure does not have the coregular or
the C.I. properties, which will also play an important role in obtaining
our results. 
\begin{defn}
A quiver $Q'=(V',A',s',t')$ is a \textit{subquiver} of the quiver
$Q=(V,A,s,t)$ if (up to graph isomorphism) $V'\subseteq V$ , $A'\subseteq A$
, $s'=s|_{A'}$ and $t'=t|_{A'}$.
\end{defn}
If $Q'$ is a subquiver of $Q$ and $\alpha'=\alpha|_{Q'}$, then
if $Q$ is coregular then $Q'$ is coregular and if $Q$ is a C.I.
then $Q'$ is a C.I., so to show that a quiver is not coregular (resp.
C.I.) it is satisfactory to show a subquiver that is not coregular
(resp. C.I.). The first statement can be found in \cite[Lemma 2.3]{Coregular},
and the second one in \cite[Lemma 4.3]{CI}.

Both smoothness and being a C.I. are properties that can be interpreted
locally. Smoothness of a variety by definition means that it is smooth
in every point. It is also known that a homogeneous ideal in a polynomial
ring is a complete intersection if its localization by the ideal of
positively graded elements is a complete intersection. Since the ideal
of relations for a quiver is a homogeneous ring (if we give the generators
the grade equal to the length of the corresponding cycles), to see
that a quiver is a C.I. it suffices to see that its localization around
the zero representation is a C.I. 

A theorem of Le Bruyn and Procesi \cite[Theorem 5]{LeB-Proc}, based
on the Luna-Slice Theorem \cite{Luna}, showed that for every point
$p\in iss_{\alpha}Q$ corresponding to a semi-simple representation,
we can build a quiver setting $(Q_{p},\alpha_{p})$ which usually
has a simpler structure, such that a neighborhood of the zero representation
in $iss_{\alpha_{p}}Q_{p}$ will be étale isomorphic to an open neighborhood
of $p$ in $iss_{\alpha}Q$. 
\begin{thm}
\label{thm:local}For a point $p\in iss_{\alpha}Q$ corresponding
to a semisimple representation $V=S_{1}^{\oplus a_{1}}\oplus...\oplus S_{k}^{\oplus a_{k}}$,
there is a quiver setting $(Q_{p},\alpha_{p})$ called the \textup{local
quiver setting} such that we have an étale isomorphism between an
open neighborhood of the zero representation in $iss_{\alpha_{p}}Q_{p}$
and an open neighborhood of $p$ in $iss_{\alpha}Q$. 

$Q_{p}$ has $k$ vertices corresponding to the set $\{S_{i}\}$ of
simple factors of $V$ and between $S_{i}$ and $S_{j}$ the number
of arrows equals \[
\delta_{ij}-\chi_{Q}(\alpha_{i},\alpha_{j}),\]
where $\alpha_{i}$ is the dimension vector of the simple component
$S_{i}$ and $\chi_{Q}$ is the Ringel form of the quiver $Q$. The
dimension vector $\alpha_{p}$ is defined to be $(a_{1},...,a_{k})$,
where the $a_{i}$ are the multiplicities of the simple components
in $V$.\end{thm}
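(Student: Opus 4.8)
The plan is to apply Luna's slice theorem \cite{Luna} to the linear action of $GL_{\alpha}$ on $Rep_{\alpha}Q$ at a point $x$ lying in the closed orbit that corresponds to the semisimple representation $V$. Since $V$ is semisimple, the orbit $O(x)$ is closed, so the slice theorem provides a locally closed $\mathrm{Stab}(x)$-stable affine subvariety $x\in S\subseteq Rep_{\alpha}Q$, transverse to $O(x)$, such that $GL_{\alpha}\times_{\mathrm{Stab}(x)}S\to Rep_{\alpha}Q$ is \'etale onto a $GL_{\alpha}$-saturated open neighbourhood of $O(x)$; passing to quotients, $S//\mathrm{Stab}(x)\to Rep_{\alpha}Q//GL_{\alpha}=iss_{\alpha}Q$ is \'etale onto an open neighbourhood of $p$. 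Because $Rep_{\alpha}Q$ is a vector space, the slice may be chosen to be the normal space $N_{x}:=Rep_{\alpha}Q/T_{x}O(x)$ together with its induced linear $\mathrm{Stab}(x)$-action, so the task reduces to identifying $\mathrm{Stab}(x)$ with the structure group and $N_{x}$ with the representation space of a suitable quiver setting.

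For the stabilizer, an element of $GL_{\alpha}$ fixes $x$ exactly when it is an automorphism of $V$; since the $S_{i}$ are pairwise non-isomorphic finite-dimensional simple $\mathbb{C}Q$-modules, Schur's lemma over $\mathbb{C}$ gives $\mathrm{Aut}(V)\cong\prod_{i=1}^{k}GL_{a_{i}}(\mathbb{C})$, which (embedded block-diagonally via the decomposition of $V$) is precisely the structure group $GL_{\alpha_{p}}$ of a quiver setting on $k$ vertices with dimension vector $\alpha_{p}=(a_{1},\dots,a_{k})$. For the normal space, $T_{x}O(x)$ is the image of the differential of the orbit map, which is the first map of the two-term complex
\[
0\longrightarrow\bigoplus_{v\in V}Mat_{\alpha(v)}(\mathbb{C})\longrightarrow Rep_{\alpha}Q\longrightarrow0,
\]
sending $(\phi_{v})_{v\in V}$ to $(\phi_{t(\rho)}X_{\rho}-X_{\rho}\phi_{s(\rho)})_{\rho\in A}$. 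This is the standard complex computing $\mathrm{Hom}_{\mathbb{C}Q}(V,V)$ in degree $0$ and $\mathrm{Ext}^{1}_{\mathbb{C}Q}(V,V)$ in degree $1$ (the path algebra is hereditary, so nothing survives in higher degree), and it is $\mathrm{Stab}(x)$-equivariant, so $N_{x}\cong\mathrm{Ext}^{1}_{\mathbb{C}Q}(V,V)$ as $GL_{\alpha_{p}}$-modules. Using $V=\bigoplus_{i}S_{i}^{\oplus a_{i}}$ one gets $\mathrm{Ext}^{1}_{\mathbb{C}Q}(V,V)\cong\bigoplus_{i,j}\mathrm{Hom}_{\mathbb{C}}(\mathbb{C}^{a_{i}},\mathbb{C}^{a_{j}})\otimes\mathrm{Ext}^{1}_{\mathbb{C}Q}(S_{i},S_{j})$, and the Euler form identity $\dim\mathrm{Hom}(S_{i},S_{j})-\dim\mathrm{Ext}^{1}(S_{i},S_{j})=\chi_{Q}(\alpha_{i},\alpha_{j})$ together with $\mathrm{Hom}_{\mathbb{C}Q}(S_{i},S_{j})=\delta_{ij}\mathbb{C}$ yields $\dim\mathrm{Ext}^{1}(S_{i},S_{j})=\delta_{ij}-\chi_{Q}(\alpha_{i},\alpha_{j})$. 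Hence $N_{x}$, as a $GL_{\alpha_{p}}$-representation, is exactly $Rep_{\alpha_{p}}Q_{p}$, where $Q_{p}$ is the quiver with $k$ vertices and $\delta_{ij}-\chi_{Q}(\alpha_{i},\alpha_{j})$ arrows from vertex $i$ to vertex $j$.

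Assembling the two identifications, $S//\mathrm{Stab}(x)=Rep_{\alpha_{p}}Q_{p}//GL_{\alpha_{p}}=iss_{\alpha_{p}}Q_{p}$, and under the \'etale morphism supplied by the slice theorem the image of the origin of $N_{x}$ --- i.e.\ the semisimple point corresponding to the zero representation of $Q_{p}$ --- is carried to $p$. This gives the desired \'etale isomorphism between an open neighbourhood of the zero representation in $iss_{\alpha_{p}}Q_{p}$ and an open neighbourhood of $p$ in $iss_{\alpha}Q$.

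The step I expect to require the most care is the identification of $N_{x}$ with $\mathrm{Ext}^{1}_{\mathbb{C}Q}(V,V)$ \emph{as a module over} $\mathrm{Stab}(x)$: one has to verify that the restriction to $\mathrm{Stab}(x)$ of the differential of the $GL_{\alpha}$-action coincides with the natural $\mathrm{Aut}(V)$-action on $\mathrm{Ext}^{1}$, and that, after the Schur decomposition of $V$, this is literally the base-change action of $GL_{\alpha_{p}}$ on the representation space of $Q_{p}$ --- only then does the slice quotient become $iss$ of a genuine quiver setting. A minor point worth recording along the way is that $\delta_{ij}-\chi_{Q}(\alpha_{i},\alpha_{j})=\dim\mathrm{Ext}^{1}_{\mathbb{C}Q}(S_{i},S_{j})\geq0$, so that $Q_{p}$ is a well-defined quiver.
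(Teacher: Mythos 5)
Your argument is correct: it is the standard Luna--slice proof (identify the stabilizer of a point in the closed orbit with $\prod_i GL_{a_i}(\mathbb{C})$ via Schur's lemma, identify the normal space to the orbit with $\mathrm{Ext}^1_{\mathbb{C}Q}(V,V)$ via the standard two-term complex, and compute its dimensions with the Euler form), which is exactly the argument of Le Bruyn--Procesi. The paper does not prove this theorem at all --- it only cites it as \cite[Theorem 5]{LeB-Proc} --- so there is nothing to compare beyond noting that your proof reproduces the cited source's approach.
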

\begin{rem}
\label{rem:LocalQ} If there is an étale isomorphism from an open
neighborhood of $p\in Q$ to an open neighborhood of $p'\in Q'$ then
$Q$ will be smooth (resp. locally C.I.) in $p$ if and only if $Q'$
is smooth (resp. locally C.I.) in $p'$. (For the latter see \cite{Greco CI}.)
Consequently to show that a quiver setting is not coregular (resp.
C.I.) it is satisfactory to find a local quiver setting that is not
coregular (resp. C.I.).
\end{rem}
The structure of the local quiver setting only depends on the dimension
vectors of the simple components. So to find all local quivers of
a given quiver setting we have to decompose $\alpha$ into a linear
combination of dimension vectors $\alpha=\sum a_{i}*\beta_{i}$ ($a_{i}\in\mathbb{N}$
and the $\beta_{i}$-s are not necessarily different) and check if
there is a semi-simple representation corresponding to this decomposition.
This depends on two conditions: there has to be a simple representation
corresponding to each $\beta_{i}$, and if some of the $\beta_{i}$-s
are the same there has to be at least as many different simple representation
classes with dimension vector $\beta_{i}$. For this we recall a theorem
from \cite{LeB-Proc}
\begin{thm}
\label{thm:Simple}Let $(Q,\alpha)$ be a genuine quiver setting.
There exist simple representations of dimension vector $\alpha$ if
and only if

- $Q$ is of the form

\includegraphics[scale=0.7]{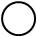}, \includegraphics[scale=0.7]{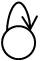},
or \includegraphics[scale=0.7]{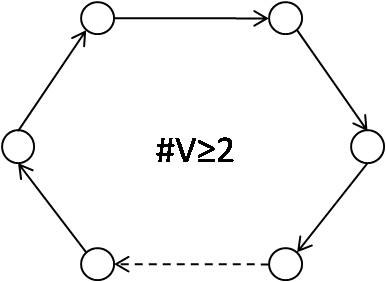}

and $\alpha(v)=1$ for all $v\in V$.

- $Q$ is not of the form above, but strongly connected and\[
\forall v\in V:\quad\chi_{Q}(\epsilon_{v},\alpha)\leq0\quad and\quad\chi_{Q}(\alpha,\epsilon_{v})\leq0\]
 where $\epsilon_{v}(u)=\begin{cases}
1 & if\: v=u\\
0 & otherwise.\end{cases}$

If $(Q,\alpha)$ is not genuine, the simple representations classes
are in bijective correspondence to the simple representations classes
of the genuine quiver setting obtained by deleting all vertices with
dimension zero. In all of the cases the dimension of $iss_{\alpha}Q$
is given by $1-\chi_{Q}(\alpha,\alpha)$, which is bigger than zero
except for the one vertex without loops, so in all the other cases
there are infinitely many classes of semi-simple representations,
and in the case of one vertex without loops there is a unique simple
representation.\end{thm}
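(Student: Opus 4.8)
\emph{The plan.} I would organize the proof in three parts: first reduce to the genuine, strongly connected case; then show the numerical conditions are necessary for a simple representation to exist; finally construct a simple representation when they hold. The three quivers on the list will turn out to be exactly the configurations where the numerical criterion misbehaves, so they have to be isolated by hand.

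\emph{Reductions and necessity.} If $(Q,\alpha)$ is not genuine, every representation vanishes on the vertices where $\alpha$ is zero and on all arrows incident to them, so deleting those vertices gives an equivalence of representation categories and hence a bijection of simple classes; this settles the non-genuine statement and lets us assume $(Q,\alpha)$ genuine. If $Q$ is not strongly connected, pick a sink of its condensation, i.e.\ a strongly connected component $C$ with no arrows leaving it; then for \emph{every} representation $X$ the subspaces $X_v$ for $v\in C$ together with $0$ at the other vertices form a proper nonzero subrepresentation, so no simple representation exists, and strong connectedness is necessary. Now assume in addition that $Q$ is none of the three listed quivers. If $\chi_Q(\epsilon_v,\alpha)>0$, that is $\alpha(v)>\sum_{\rho:\,s(\rho)=v}\alpha(t(\rho))$, then in any $X$ the combined map $X_v\to\bigoplus_{s(\rho)=v}X_{t(\rho)}$ has a kernel of dimension at least $\chi_Q(\epsilon_v,\alpha)\ge 1$; placing this kernel at $v$ and $0$ elsewhere is a subrepresentation (loops at $v$ are among the arrows leaving $v$, so they kill it too), and it is proper because such a $Q$ must have a second vertex with $\alpha$ positive (a one-vertex quiver not on the list has at least two loops, forcing $\chi_Q(\epsilon_v,\alpha)<0$). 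Dually, $\chi_Q(\alpha,\epsilon_v)>0$ yields the proper nonzero subrepresentation given by the image of $\bigoplus_{t(\rho)=v}X_{s(\rho)}\to X_v$ together with the full spaces at all other vertices. Hence a simple representation forces both inequalities; note these constructions degenerate precisely on the listed quivers — e.g.\ the single loopless vertex has $\chi_Q(\epsilon_v,\alpha)=\alpha(v)>0$ but no outgoing arrows, so the putative subrepresentation is all of $X$.

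\emph{Sufficiency.} For the three listed quivers with $\alpha\equiv 1$ simplicity is immediate: a one-dimensional representation of a single vertex (with or without a loop) is simple, and for the oriented cycle, letting every arrow act by the identity, any subrepresentation has all its vertex spaces equal and so is $0$ or everything. For a general genuine strongly connected $(Q,\alpha)$ not on the list I would prove that a generic representation is simple. For $0<\beta<\alpha$ let $S_\beta\subseteq Rep_\alpha Q$ be the set of representations admitting a subrepresentation of dimension vector $\beta$; it is the image of the incidence variety of pairs $(X,W)$, which is an affine bundle over $\prod_v \mathrm{Gr}(\beta(v),\alpha(v))$, and a direct count of Grassmannian dimensions and block-triangular entries gives $\dim S_\beta\le\dim Rep_\alpha Q+\chi_Q(\beta,\alpha-\beta)$. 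Since $Rep_\alpha Q$ is an irreducible affine space and only finitely many $\beta$ occur, it suffices to show $\chi_Q(\beta,\alpha-\beta)<0$ for every $0<\beta<\alpha$: then a generic representation avoids each $\overline{S_\beta}$ and hence has no proper nonzero subrepresentation.

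\emph{The main obstacle and the dimension formula.} The heart of the matter is the purely combinatorial claim that, for a genuine strongly connected quiver setting that is \emph{not} one of the three listed quivers, the hypotheses $\chi_Q(\epsilon_v,\alpha)\le 0$ and $\chi_Q(\alpha,\epsilon_v)\le 0$ for all $v$ do imply $\chi_Q(\beta,\alpha-\beta)<0$ for all $0<\beta<\alpha$. Excluding the listed quivers is genuinely necessary: an oriented cycle with $\alpha\equiv 2$, or a one-loop vertex with $\alpha=2$, satisfies the weak inequalities but has $\chi_Q(\beta,\alpha-\beta)=0$ for the halfway $\beta$, and indeed has no simple representation — there $S_\beta$ is even closed, by properness of the Grassmannian bundle, so it fills $Rep_\alpha Q$. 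I would attack the claim by expanding $\chi_Q(\beta,\alpha-\beta)=\sum_v\beta(v)\chi_Q(\epsilon_v,\alpha)-\chi_Q(\beta,\beta)$, analyzing separately the vertices of $\mathrm{supp}\,\beta$ that carry loops or lie on oriented subcycles, and using strong connectedness to propagate a single strict inequality across the whole support; the bookkeeping needed to rule out every borderline configuration other than the listed quivers is where I expect the real work to be. Finally, once a simple representation exists the simple locus is a dense open $GL_\alpha$-stable subset of $Rep_\alpha Q$ on which the quotient map is geometric with every stabilizer equal to the scalars (Schur's lemma), whence $\dim iss_\alpha Q=\dim Rep_\alpha Q-(\dim GL_\alpha-1)=1-\chi_Q(\alpha,\alpha)$; inspection gives $0$ for the single loopless vertex and $1$ for the other two listed quivers, and $\chi_Q(\alpha,\alpha)=\sum_v\alpha(v)\chi_Q(\epsilon_v,\alpha)\le 0$ in all remaining cases makes the dimension positive there, while for the single loopless vertex $iss_\alpha Q$ is a point carrying exactly one semisimple, hence one simple, class.
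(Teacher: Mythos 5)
The paper does not prove this theorem at all --- it is recalled verbatim from Le Bruyn--Procesi \cite{LeB-Proc} --- so the only question is whether your argument stands on its own. Its architecture is the standard one and most of it is sound: the reduction to the genuine strongly connected case, the kernel/image subrepresentations proving necessity of $\chi_Q(\epsilon_v,\alpha)\le 0$ and $\chi_Q(\alpha,\epsilon_v)\le 0$, the dimension count $\dim S_\beta\le\dim Rep_\alpha Q+\chi_Q(\beta,\alpha-\beta)$ via the incidence variety, and the derivation of $\dim iss_\alpha Q=1-\chi_Q(\alpha,\alpha)$ from Schur's lemma on the simple locus are all correct.

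The genuine gap is that you do not prove the step you yourself identify as the heart of the matter: that for a genuine strongly connected setting not of the three listed shapes, the vertex-wise inequalities force $\chi_Q(\beta,\alpha-\beta)<0$ for every $0<\beta<\alpha$. This is where essentially all the difficulty of the theorem lives, and the expansion you propose, $\chi_Q(\beta,\alpha-\beta)=\sum_v\beta(v)\chi_Q(\epsilon_v,\alpha)-\chi_Q(\beta,\beta)$, does not by itself close it: the first sum is $\le 0$ termwise, but $-\chi_Q(\beta,\beta)$ can be strictly positive (e.g.\ when the support of $\beta$ carries several loops or a dense subquiver), so one really must run the delicate induction on the support of $\beta$ --- which need not be strongly connected and need not satisfy $0<\beta(v)<\alpha(v)$ at every vertex --- and rule out all borderline configurations; asserting that ``bookkeeping'' will do it is not a proof. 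A secondary, smaller gap: for the listed shapes with $\alpha\equiv n\ge 2$ (oriented cycle, or one loop) both numerical conditions hold, so your necessity argument is silent, and your claim that $S_\beta$ is closed of top dimension and therefore ``fills $Rep_\alpha Q$'' does not follow (a proper map from an irreducible variety of dimension $\dim Rep_\alpha Q$ can have lower-dimensional image if its fibres are positive-dimensional); here you should instead argue directly, e.g.\ via an eigenvector of the monodromy matrix around the cycle, that every such representation has a proper nonzero subrepresentation.
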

\begin{defn}
We will call a quiver setting $(Q',\alpha')$ a descendant of a quiver
setting $(Q,\alpha)$, if $(Q',\alpha')$ can be obtained from $(Q,\alpha)$
by repeteadly applying the reduction steps RI-III, taking local quivers
or subquivers. 

So by the above, to see that quiver setting is not a C.I. it is enough
to show that one of its descendants is not a C.I. Being a descendant
is somewhat similar to the notion of minor for undirected graphs,
however in this case only certain kinds of edge-contractions are allowed.
\end{defn}

\section{\label{sub:1-dimQuivers}Quiver settings with one dimensional vertices}

In sections 3-6 we will only be dealing with the case when the dimension
vector of the quiver setting is $(1,...,1)$, and for the sake of
simplicity we will write $Q$ instead of $(Q,\alpha)$. We will briefly
overview what the above results mean when all the vertices have dimension
1. For a quiver $Q=(V,A,s,t)$ and $\alpha=(1,...,1)$, $1-\chi_{Q}(\epsilon_{v},\alpha)$
and $1-\chi_{Q}(\alpha,\epsilon_{v})$ are the in-degree and the out-degree
of the vertex $v$, and $\chi_{Q}(\alpha,\alpha)=|V|-|A|$. According
to Theorem \ref{thm:Simple} there is a simple representation with
dimension vector $\alpha$ if and only if $Q$ is strongly connected
($\chi_{Q}(\epsilon_{v},\alpha)\leq0$ and $\chi_{Q}(\alpha,\epsilon_{v})\leq0$
holds automatically in this case). Applying Theorem \ref{thm:local}
we can see that to construct a local quiver $(Q',\alpha')$ we have
to decompose $Q$ to strongly connected complete subquivers, then
the vertices of $Q'$ will correspond to these subquivers, and the
number of arrows between two vertices will equal to the number of
arrows between the corresponding subquivers of $Q.$ Since each simple
component is listed once in the decomposition, we have $\alpha'=(1,...,1).$
We will say that $Q'$ is the local quiver we get by \textit{gluing}
the vertices in some strongly connected subquivers. Note that when
we glue together some vertices in a quiver there will be a natural
graph-homomorphism between the old and the new quiver, so it makes
sense to talk about the image and pre-image of vertices, arrows, paths
and cycles. The image of a path remains a path if it did not run through
the glued subquiver twice, and it will become a cycle if it started
from and ended in the glued subquiver. The image of a cycle will always
be a cycle, but the image of a primitive cycle will only remain primitive
if it did not run through the glued subquiver twice. Moreover each
path (resp. primitive cycle) has amongst its preimages at least one
path (resp. primitive cycle).

Also for a strongly connected $Q$, $dim(iss_{\alpha}Q)=1-\mbox{\ensuremath{\chi}}_{Q}(\alpha,\alpha)=1+|A|-|V|$.
The quasi-primitive cycles and the primitive cycles are the same,
and they generate the ring of invariants. It is also clear that all
of these cycles are needed to generate that ring. Let $C$ denote
the set of primitive cycles in $Q$, $iss_{\alpha}Q$ is embedded
in a $|C|$ dimensional affine space, so \[
codim(iss_{\alpha}Q)=|C|+|V|-|A|-1.\]
For an arbitrary quiver $Q$ we will use the notation $F(Q)=|C|+|V|-|A|-1$.
(It is worth noting that we now have a geometrical proof for the combinatorial
fact that $F(Q)\geq0$ for any strongly connected quiver $Q$.) For
a quiver setting in which all vertices are 1 dimensional, $iss_{\alpha}Q$
being smooth (so an affine space) is equivalent to $F(Q)=0$, and
$iss_{\alpha}Q$ being a C.I. is equivalent to the ideal of $iss_{\alpha}Q$
being generated by $F(Q)$ elements.

We also note that RIII can never be applied on a quiver with one dimensional
vertices, so being reduced in this case means, that there is no loops
in the quiver and all the vertices have in-degree and out-degree greater
than or equal to 2, or that the quiver consists of a single vertex
with no loops.

\section{Proof of theorem \ref{thm:coregular}}

There is an error in \cite{Coregular}, in the proof for \ref{thm:coregular}.
When the author discusses the case $\alpha=(1,...,1)$, he argues
on the bottom of page 312 that when there is no subquiver of form

\includegraphics[scale=0.7]{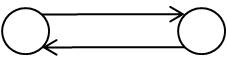}\\
then a vertex '$v$' can be removed in the following way: 

\includegraphics[scale=0.7]{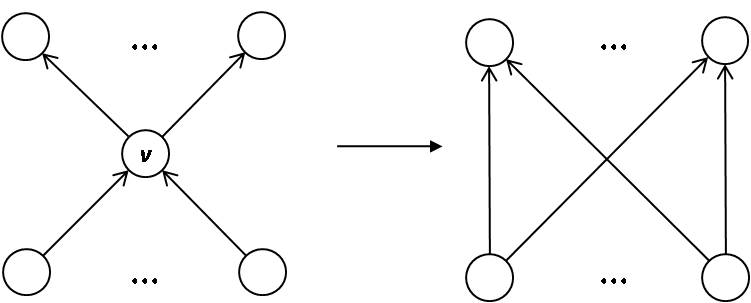}\\
without changing the number of primitive cycles. This is however
not true since non-primitive cycles, that run through $v$ multiple
times, but do not run through any other vertex more then once, will
become primitive cycles in the new quiver. The number of new cycles
can be arbitrarily large as demonstrated on the example below: 

\includegraphics[scale=0.7]{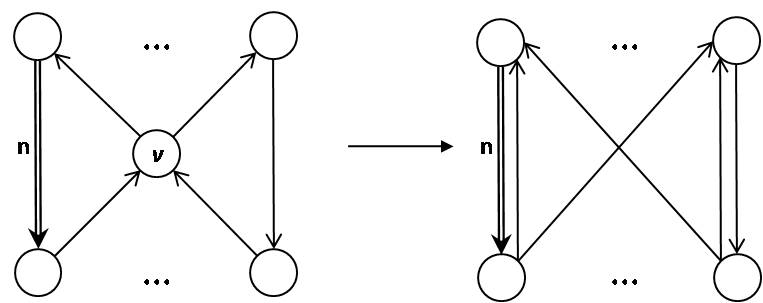}\\
The quiver on the left has $n+1$ primitive cycles, while the
one on the right has $2n+1$.

As it is explained above, to prove Theorem \ref{thm:coregular} in
the case $\alpha=(1,...,1)$, we have to see that the only reduced
quiver, for which $F(Q)=0$ holds, is the one consisting of a single
vertex with no loops. This follows from the lemma below, which will
also be useful for us later:
\begin{lem}
\label{lem:coregularcorrection} If Q is a strongly-connected quiver
without loops, and for every vertex the in-degree and the out-degree
are both at least 2, then $F(Q)\geq1$.\end{lem}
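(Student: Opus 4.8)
The statement is equivalent to the inequality $|C|\ge |A|-|V|+2$, where $C$ is the set of primitive cycles of $Q$, since by the discussion in Section~\ref{sub:1-dimQuivers} one has $F(Q)=|C|+|V|-|A|-1$; recall also that $F(Q)\ge 0$ for every strongly connected $Q$. The basic tool I would use is the behaviour of $F$ under deleting an arc that does not destroy strong connectivity: if $a$ is an arc of $Q$ for which $Q-a$ is still strongly connected, then a primitive cycle of $Q$ either avoids $a$ (and is then precisely a primitive cycle of $Q-a$) or runs through $a$ exactly once, so a direct count gives
\[
F(Q)=F(Q-a)+p_{a}-1,
\]
where $p_{a}$ is the number of primitive cycles of $Q$ through $a$, and $p_{a}\ge 1$ because appending to $a$ a shortest path from its head to its tail yields a primitive cycle. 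Hence it is enough to produce \emph{one} arc $a$ such that $Q-a$ is strongly connected and $p_{a}\ge 2$: then $F(Q)\ge F(Q-a)+1\ge 1$.

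Such a ``removable'' arc exists for trivial reasons: every vertex has out-degree at least $2$, so $|A|\ge 2|V|>2(|V|-1)$, which exceeds the bound $2(|V|-1)$ on the number of arcs of a minimally strongly connected digraph on $|V|$ vertices, and therefore $Q$ is not minimally strongly connected. I would now prove, by induction on $|V|$, that in fact some removable arc carries at least two primitive cycles. The base case $|V|=2$ is immediate: with $p\ge 2$ arrows in one direction and $q\ge 2$ in the other there are $pq$ primitive cycles, so $F(Q)=pq-p-q+1=(p-1)(q-1)\ge 1$. For the inductive step, suppose for contradiction that every removable arc $a=x\to y$ lies on a unique primitive cycle, i.e.\ that there is a unique simple directed path $\pi_{a}$ from $y$ to $x$. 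Since $y$ has out-degree at least $2$, it has an out-arc $y\to y'$ other than the first arc of $\pi_{a}$; a short case analysis shows $y'\notin V(\pi_{a})$ (rerouting through $y'$, or through a $2$-cycle if $y'=x$, would produce a second simple $y$-to-$x$ path or a second primitive cycle through $a$), and then that every path from $y'$ to $x$ must pass through $y$ (otherwise prepend $y\to y'$). Consequently the set $S$ consisting of $y$ together with all vertices whose every path to $x$ passes through $y$ satisfies $\{y\}\subsetneq S\subsetneq V$, and every arc leaving $S$ starts at $y$; thus $y$ is a vertex whose removal destroys strong connectivity.

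The remaining task, and the main obstacle, is to derive a contradiction from the presence of such a separating vertex $y$ under the degree hypothesis. Here I would split $Q$ at $y$ into the ``downstream'' part induced on $S$ and the complementary part, keeping $y$ in both, and track how the primitive cycles of $Q$ distribute among the pieces (each primitive cycle lies entirely in one piece, or passes through $y$ exactly once and so decomposes into one path in each piece), aiming either to descend to a strictly smaller quiver still satisfying the hypotheses of the lemma, so that induction applies, or to exhibit directly a removable arc lying on two primitive cycles. The delicate point throughout is that deleting an arc, or splitting off a piece at $y$, generally lowers the in- or out-degree of the endpoints involved below $2$, so one cannot naively induct on $|A|$; the separating-vertex structure extracted above is precisely what is needed to make the induction on $|V|$ go through, and the careful accounting of primitive cycles across the cut is where the substance of the argument lies.
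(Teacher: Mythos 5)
Your argument is genuinely incomplete: the final paragraph is a plan, not a proof, and by your own admission it is where ``the substance of the argument lies.'' What you have actually established is (i) the identity $F(Q)=F(Q-a)+p_a-1$ for a removable arc $a$, (ii) the case $|V|=2$, and (iii) that if every removable arc $a\colon x\to y$ lay on a unique primitive cycle, then each such head $y$ would be a separating vertex (this part of the case analysis is correct). But no contradiction is derived from the existence of such a separating vertex, and this is exactly the hard point: as you note yourself, splitting $Q$ at $y$, or deleting arcs, destroys the degree hypothesis on the vertices incident to the cut, so neither the lemma's induction hypothesis nor your intermediate claim applies to the pieces. Moreover, the intermediate claim you reduce to --- that some removable arc lies on at least two primitive cycles --- is not obviously implied by $F(Q)\geq 1$ and is not visibly easier than the lemma itself, so the reduction does not by itself represent progress. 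As it stands the proof cannot be accepted.

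For comparison, the paper's proof closes the induction differently, and the difference is instructive. It strengthens the statement to allow \emph{one} exceptional vertex whose in- or out-degree may be less than $2$ (statement $(*)$ in the proof), which is exactly the slack needed to absorb the degree loss that blocks your induction; the exceptional vertex is then eliminated by the reduction steps RI and RII, which preserve the property $F\geq 1$ because they change the invariant ring only by free variables. The descent itself is done not by deleting arcs but by gluing the vertices of a non-Hamiltonian primitive cycle (passing to a local quiver), which produces a strictly smaller strongly connected quiver with at most one bad vertex, to which $(*)$ applies; the Hamiltonian case is handled by exhibiting an explicit subquiver with $F\geq 1$. If you want to salvage your arc-deletion approach, you would need an analogous strengthening of the induction hypothesis that tolerates the degree defects created at $x$ and $y$, together with a genuine argument in the separating-vertex configuration; without that, the proof has a real gap.
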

\begin{proof}
We prove the theorem by induction on the number of vertices. For one
vertex the statement is true, since there is no such quiver with one
vertex at all. Let us suppose we already saw that the statement is
true for quivers with at most $k$ vertices. It then follows that
the following stronger statement is true for quivers with at most
k vertices: 

({*}) \textit{If Q is a strongly-connected quiver, }\textbf{\textit{with
at least two vertices}}\textit{, without loops, and for every vertex,
}\textbf{\textit{with the possible exception of one vertex,}}\textit{
the in-degree and the out-degree are both at least 2, then $F(Q)\geq1$.}

We prove this by induction as well. ({*}) is obviously true if there
are only two vertices since if one of them has in-degree and out-degree
two or bigger then so does the other. Let us suppose ({*}) is true
for some $l<k$, and regard a quiver $Q$ with $l+1$ vertices that
has at most one vertex whose in- and out-degrees are not both at least
two. If it has no such vertex then $F(Q)\geq1$ follows from $k\geq l+1$
and the induction hypothesis on the original lemma. If it has exactly
one such vertex then we apply the reduction step RI, and then RII
to remove all possible loops, and get a quiver $Q'$ that has again
at most one vertex whose in- and out-degrees are not both at least
two. So applying the induction hypothesis on $Q'$ we get $F(Q')\geq1$
and since, according to lemmas \ref{lem:RI} and \ref{lem:RII} neither
RI nor RII can change this property, $F(Q)\geq1$ holds.

Now we proceed with the induction on the original lemma. Let us suppose
$Q$ is a quiver with $k+1$ vertices for which every vertex has in-
and out-degrees 2 or greater. If all primitive cycles of Q are $k+1$
long then Q has a subquiver of form:\smallskip{}

\includegraphics[scale=0.7]{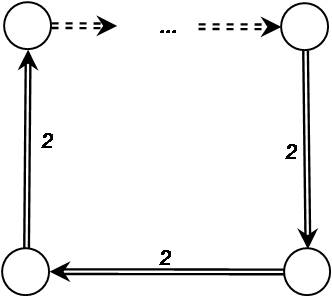}\\
for which $F(Q)\geq1$.

If there is a primitive cycle shorter than $k+1$ then let $Q'$ be
the local quiver of $Q$ we get by gluing the vertices of this cycle.
Let $Q''$ be the quiver we get from $Q'$ by removing all loops.
$Q''$ has at most one vertex that can have in- or out-degree 1 (namely
the new vertex we created by gluing the cycle), it is strongly connected
and has at least 2 vertices but no more than $k$, so we can apply
({*}) and see that $F(Q'')\geq1$. Since we got $Q''$ is a descendant
of $Q$ , $F(Q)\geq1$ follows.
\end{proof}

\section{Minimal generating sets for the ideal of relations}

According to Theorem \ref{thm:PrimcGenerate}, $\mathbb{C}[iss_{\alpha}Q]$
is isomorphic to a monomial subring of a polynomial ring. Let $n$
denote the number of primitive cycles in $Q$ and $f_{1},...,f_{n}$
denote the monomials corresponding to the primitive cycles. $iss_{\alpha}Q$
is then embedded in an $n$ dimensional affine space, and we have
a morphism \[
\varphi:\mathbb{C}[x_{1},...x_{n}]\rightarrow\mathbb{C}[iss_{\alpha}Q]\quad\varphi(x_{i})=f_{i}\]
for which $Ker(\varphi)$ is the ideal of the variety $iss_{\alpha}Q$.
We will refer to $Ker(\varphi)$ as the ideal of relations, and to
its elements as relations on $Q$. 

It is important to note that, since $\mathbb{C}[iss_{\alpha}Q]$ is
a monomial ring, the ideal $Ker(\varphi')$ (also referred to as the
\textit{toric ideal} of the monomial subring) is generated by binomials
(see for ex., \cite[Proposition 7.1.2]{Villareal}). Moreover some
binomial $m_{1}-m_{2}$ is in $Ker(\varphi)$ if and only if the multisets
of arrows corresponding to $m_{1}$ and $m_{2}$ are the same (this
is clear if we regard $\mathbb{C}[iss_{\alpha}Q]$ as a subring of
the polynomial ring $\mathbb{C}[Rep_{\alpha}Q]$).

The elements of $\mathbb{C}[iss_{\alpha}Q]$ and thus $Ker(\varphi)$
can be graded so that each variable has grade equal to the length
of the corresponding primitive cycle. With this grading all of the
binomials in $Ker(\varphi)$ are homogeneous, so $Ker(\varphi)$ is
a homogeneous ideal. It is then known that all minimal homogeneous
systems of generators of $Ker(\varphi)$ have the same number of elements,
and that a generating set with minimal number of elements can be chosen
to be homogeneous. So to find out the number of elements needed to
generate $Ker(\varphi)$ we only have to find a minimal binomial generating
set.

It is clear that one can find a minimal generating set of binomials
recursively going through the corresponding multisets of arrows (or
equivalently the multigrading inherited from $\mathbb{C}[Rep_{\alpha}Q]$).
If a minimal set $A$ generating the relations on all multisets strictly
smaller than $U$ has already been found we will have to chose a minimal
set of relations $B$ on $U$, such that $A\cup B$ generate the relations
on all multisets that are contained in $U$. We can define a relation
among the partitions of U: $m_{1}\thickapprox_{U}m_{2}$ if and only
if $m_{1}-m_{2}$ is generated by relations on multisets strictly
smaller than U . Clearly this is an equivalence relation. It is easy
to see that $m_{1}\thickapprox_{U}m_{2}$ is equivalent to saying
that there is a finite sequence $p_{1},...,p_{k}$ of partitions of
$U$ into cycles, such that $m_{1}=p_{1}$ , $m_{2}=p_{k}$ and $p_{i}$
and $p_{i+1}$ have at least one cycle in common for all $1\leq i\leq nk-1$.
If there are $n$ equivalence classes of $\thickapprox_{U}$, then
we will need at least $n-1$ relations in $B$. Thus if we can calculate
the size of a minimal generating set for $Ker(\varphi)$, by counting
the number of equivalence classes for each arrow multiset of $Q$.
Note that $\thickapprox_{U}$ only depends on the quiver and the multiset
and not how the generators in smaller multisets were chosen, so it
makes sense to use the notation \[
E(U):=|\{equivalence\, classes\, of\,\thickapprox_{U}\}|-1.\]
 Note that $E(U)=0$ with finitely many exceptions since, the ideal
of relations is finitely generated.

For two cycles $c_{1},\: c_{2}$ we will denote by $c_{1}+c_{2}$
the multiset sum of their arrows. We define an equivalence relation
amongst the primitive cycles of some arrow-multiset U as follows:
$c_{1}\sim_{U}c_{2}$ if there is a finite sequence $d_{1},...,d_{k}$
of cycles such that $c_{1}=d_{1}$, $c_{2}=d_{k}$ and $d_{i}+d_{i+1}\leq U$
for all $1\leq i\leq k-1$. 
\begin{prop}
$c_{1}\sim_{U}c_{2}$ if and only if for any $m_{1}$ containing $c_{1}$
and $m_{2}$ containing $c_{2}$ $m_{1}\thickapprox_{U}m_{2}$ . In
particular $\sim_{U}$ and $\thickapprox_{U}$ have the same number
of equivalence classes.\end{prop}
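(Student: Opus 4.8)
The plan is to pass back and forth between the combinatorial picture (primitive cycles $c$ with $c\leq U$, related by $\sim_U$) and the algebraic picture (partitions $m$ of $U$ into primitive cycles, i.e.\ monomials in the variables $x_i$, related by $\thickapprox_U$), using throughout the characterization recalled above: $m_1\thickapprox_U m_2$ precisely when there is a chain $p_1=m_1,\dots,p_k=m_2$ of partitions of $U$ in which consecutive members share a primitive cycle. I will lean on two elementary facts. \textbf{(i)} Call an arrow-multiset \emph{balanced} if every vertex has equal in-degree and out-degree in it; a balanced arrow-multiset is a multiset sum of primitive cycles (follow arrows from any vertex until one repeats, split off the resulting simple cycle, and induct on the number of arrows). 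Since $U$ and every primitive cycle are balanced, $U-c$ is balanced whenever $c\leq U$; hence the primitive cycles occurring in partitions of $U$ are exactly those with $c\leq U$ (this is the natural domain of $\sim_U$, to which we restrict), and every such $c$ lies in at least one partition of $U$. \textbf{(ii)} If two distinct primitive cycles $c,c'$ both occur in a single partition $p$ of $U$, then $c+c'\leq U$, so $c\sim_U c'$ in one step; consequently the primitive cycles appearing in a fixed partition all lie in one $\sim_U$-class, while all partitions containing a fixed cycle $c$ trivially lie in one $\thickapprox_U$-class.

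\textbf{The two implications.} For the forward direction, let $c_1=d_1,d_2,\dots,d_k=c_2$ be a $\sim_U$-chain, so $d_i+d_{i+1}\leq U$. By (i), complete each $U-d_i-d_{i+1}$ to a partition $r_i:=d_i+d_{i+1}+q_i$ of $U$. Then $r_i$ and $r_{i+1}$ share $d_{i+1}$, so $r_1\thickapprox_U\cdots\thickapprox_U r_{k-1}$; moreover any partition $m_1$ containing $c_1=d_1$ satisfies $m_1\thickapprox_U r_1$, and any $m_2$ containing $c_2=d_k$ satisfies $m_2\thickapprox_U r_{k-1}$. Concatenating these gives $m_1\thickapprox_U m_2$. (If $k=1$ then $m_1$ and $m_2$ both contain $c_1=c_2$ and are directly $\thickapprox_U$-related.) For the converse, assume $m_1\thickapprox_U m_2$ for \emph{all} partitions $m_1\ni c_1$ and $m_2\ni c_2$. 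By (i) such partitions exist; fix one of each, and take a witnessing chain $p_1,\dots,p_s$ in which $p_j$ and $p_{j+1}$ share a cycle $d_j$. In the sequence $c_1,d_1,\dots,d_{s-1},c_2$ every consecutive pair lies in a common partition of $U$ — namely $c_1,d_1\in p_1$, then $d_j,d_{j+1}\in p_{j+1}$, then $d_{s-1},c_2\in p_s$ — so by (ii), after deleting immediate repetitions, this is a valid $\sim_U$-chain, whence $c_1\sim_U c_2$.

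\textbf{Equality of the class counts.} This is now formal: assign to the $\thickapprox_U$-class of a partition $p$ the $\sim_U$-class of any primitive cycle occurring in $p$. The value is independent of the chosen cycle by (ii); it is independent of the representative $p$ because if $p\thickapprox_U p'$ with $c\in p$ and $c'\in p'$, then — all partitions containing $c$ forming one $\thickapprox_U$-class, likewise for $c'$ — every partition containing $c$ is $\thickapprox_U$ every partition containing $c'$, so $c\sim_U c'$ by the converse implication. The resulting map is surjective since every $c\leq U$ lies in some partition, and injective since $c\sim_U c'$ (for $c\in p$, $c'\in p'$) forces $p\thickapprox_U p'$ by the forward implication; hence it is a bijection between the two sets of equivalence classes. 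There is no conceptual obstacle here; the only points requiring care are the multiset arithmetic in the degenerate cases (one-term chains, and chains with repeated cycles) and the consistent appeal to fact (i), which is also what pins down the domain of $\sim_U$.
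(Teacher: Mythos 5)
Your argument is correct and follows essentially the same route as the paper's proof: you translate a $\sim_U$-chain into a chain of partitions by completing each $d_i+d_{i+1}$ to a partition of $U$, and conversely extract the shared cycles from a $\thickapprox_U$-chain and delete repetitions. You are somewhat more careful than the paper in justifying that balanced complements decompose into primitive cycles and in spelling out the bijection of equivalence classes, but the underlying idea is identical.
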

\begin{proof}
If $c_{1}\in m_{1}$and $c_{2}\in m_{2}$ for some $m_{1}\thickapprox_{U}m_{2}$,
we have a sequence $m_{1}=p_{1},...,p_{k}=m_{2}$ as above. Pick $d_{i}$
to be one of the cycles that are both in $p_{i-1}$ and $p_{i}$ for
$2\leq i\leq k$ and $d_{1}=c_{1}$, $d_{k+1}=c_{2}$. Deleting possible
repetitions from $d_{1},...,d_{k+1}$ we obtain a sequence as in the
definition of $\sim_{U}$ . Conversely if $c_{1}\sim_{U}c_{2}$ then
take a sequence $d_{1},...,d_{k}$ as in the definition and pick $p_{2},...,p_{k}$
to be such that $p_{i}$ contains $d_{i-1}+d_{i}$. Choosing $p_{1}=m_{1}$and
$p_{k+1}=m_{2}$ we see that $m_{1}\thickapprox_{U}m_{2}$.
\end{proof}
We will call a primitive cycle \emph{strong} in $U$ if its complement
is a single primitive cycle, \emph{weak }otherwise. We will call a
binomial relation strong if at least one of its monomials is a product
of two primitive cycles, weak otherwise. First we will establish a
theorem showing that for finding a minimal generating set of $Ker(\varphi)$
it is enough to consider strong binomial relations. A multiset of
arrows is called \textit{Eulerian} if every vertex has the same in-
and out-degree with respect to this multiset. 
\begin{thm}
\label{thm:weakcyc}For an Eulerian multiset of arrows U all of the
cycles that are weak in U are equivalent.\end{thm}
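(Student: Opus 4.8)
The plan is to work throughout with the relation $\sim_U$ on primitive cycles (legitimate by the Proposition), and to prove the equivalent statement: any two weak primitive cycles $c_1,c_2\subseteq U$ satisfy $c_1\sim_U c_2$. I would begin with two cheap reductions. First, if $c_1$ and $c_2$ share no arrow, then $c_1+c_2\le U$ already (for $0/1$ cycles $\mathrm{mult}_{c_1}(b)+\mathrm{mult}_{c_2}(b)\le 1\le\mathrm{mult}_U(b)$ whenever $b$ occurs at all), so $c_1\sim_U c_2$ directly; hence only weak cycles sharing an arrow must be linked, and in particular we may assume $U$ is strongly connected (an Eulerian multiset is a disjoint union of strongly connected Eulerian pieces, cycles in distinct pieces are arrow-disjoint hence $\sim_U$-adjacent, and two cycles sharing an arrow lie in a common piece). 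Second, if there is \emph{any} primitive cycle $g$ with $g+c_1\le U$ and $g+c_2\le U$, then $c_1\sim_U g\sim_U c_2$; such a $g$ exists iff the multiset $U\setminus(c_1\cup c_2)$ of arrows not covered by $c_1\cup c_2$ still contains a cycle. Finally, since $c_1$ is weak we may write $U\setminus c_1=e_1+\dots+e_m$ with $m\ge 2$ primitive cycles; each $e_i$ is then again weak (its complement $c_1+\sum_{j\ne i}e_j$ is a sum of $\ge 2$ primitive cycles, hence not a single one) and $c_1,e_1,\dots,e_m$ are pairwise $\sim_U$-adjacent because $e_i+e_j\le U\setminus c_1\le U$ and $e_i+c_1\le U$. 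The same holds for $c_2$ with cycles $f_1,\dots,f_{m'}$. So the task becomes: link the ``weak clique'' $\{c_1,e_1,\dots,e_m\}$ to the weak clique $\{c_2,f_1,\dots,f_{m'}\}$.

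The core argument I would run is an induction on the number of arrows $|U|$. Suppose there is a primitive cycle $g\le U$ such that $U\setminus g$ is nonempty and still contains both $c_1$ and $c_2$. If $c_1,c_2$ are still weak in $U\setminus g$, the induction hypothesis gives $c_1\sim_{U\setminus g}c_2$, and since $U\setminus g\le U$ the same chain works in $U$. If instead, say, $c_1$ is strong in $U\setminus g$, then $(U\setminus g)\setminus c_1$ is a single primitive cycle $h$, so $U=c_1+h+g$; now $c_1+g$, $c_1+h$, $h+g$, and (because $c_2\le U\setminus g=c_1+h$) also $c_2+g$ are all $\le U$, so again $c_1\sim_U g\sim_U c_2$ (the sub-cases where $c_2$ is strong, or both are strong, are treated the same way). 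The cycles above furnish candidates for $g$: removing some $e_i$ always keeps $c_1$ inside $U\setminus e_i$, so the induction succeeds unless $e_i\not\le U\setminus c_2$ for every $i$, and symmetrically $f_j\not\le U\setminus c_1$ for every $j$, and moreover $U\setminus(c_1\cup c_2)$ is acyclic, i.e.\ a DAG. It remains to rule out this rigid configuration.

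Ruling it out is the step I expect to be the real obstacle, because producing any $\sim_U$-chain from $c_1$ to $c_2$ is then blocked: a chain through intermediate cycles $W$ always forces some multiset of the shape $U\setminus(a\cup b)$ to contain a cycle, and in the rigid configuration all such multisets are DAGs (one gets $e_i+f_j\not\le U$ for all $i,j$, $U\setminus(c_1\cup f_j)$ acyclic, and so on). So the contradiction must come from showing the configuration cannot occur at all. Here I would analyse the structure it imposes: writing $c_1$ and $c_2$ as alternations of their maximal common subpaths $Q_1,\dots,Q_k$ (from vertices $x_i$ to $y_i$) with private subpaths, the hypotheses force $U\setminus(c_1\cup c_2)$ to be a disjoint union of arc-disjoint directed paths joining the $y_i$'s to the $x_j$'s, while each $e_i$ meets each $f_j$ in a multiplicity-one arrow. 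I would then reroute: splice a common subpath $Q_i$, a suitable path of $U\setminus(c_1\cup c_2)$, and a private arc to build a primitive cycle that can be placed inside $U$ simultaneously with $c_1$ and with $c_2$, contradicting the assumed non-existence of a common companion. Making the rerouting yield a genuinely \emph{simple} closed walk, and controlling arrow multiplicities so that the new cycle really fits, is the delicate point; an alternative route worth keeping in reserve is to phrase the linking via $2$-switches between cycle decompositions of $U$ — consecutive decompositions share a cycle and are therefore $\sim_U$-linked — but then one must deal carefully with decompositions into only two cycles, where a switch merges the two pieces into a single non-primitive closed walk.
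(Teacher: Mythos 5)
Your setup is sound: the reduction to linking two partitions of $U$ into weak primitive cycles, the observation that the complementary cycles $e_i$ (resp.\ $f_j$) are themselves weak and pairwise compatible with $c_1$ (resp.\ $c_2$), and the inductive step of deleting a primitive cycle $g$ from $U$ (including the sub-case where $c_1$ or $c_2$ becomes strong in $U\setminus g$) are all correct. But the proof has a genuine gap exactly where you say you expect one. Note first that your three escape conditions collapse into one: a cycle $g$ with $g+c_1\le U$ and $g+c_2\le U$ is the same thing as a cycle in $U\setminus(c_1\cup c_2)$, and any $e_i$ or $f_j$ that works is such a $g$. So the entire theorem reduces, in your approach, to the claim that for two weak primitive cycles $c_1,c_2$ in an Eulerian $U$ with $c_1+c_2\not\le U$, the multiset $U\setminus(c_1\cup c_2)$ must contain a cycle --- i.e.\ that weak cycles are always at distance at most $2$ in the compatibility graph. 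That claim is strictly stronger than the theorem (which only asserts the existence of \emph{some} chain), it is nowhere proved in your argument, and the rerouting sketch via maximal common subpaths is not carried out; you yourself flag that making the rerouted walk simple and multiplicity-feasible is ``the delicate point.'' As it stands nothing rules out the rigid configuration, and if it can actually occur your strategy does not fall back on longer chains, so the argument would collapse rather than merely need patching.

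For comparison, the paper's proof sidesteps this entirely by shrinking $U$ differently: instead of deleting a whole cycle, it contracts a consecutive pair of arrows $a_1,a_2$ (with $t(a_1)=s(a_2)$) into one arrow, which keeps $U$ Eulerian and sends primitive cycles to primitive cycles whenever a cycle uses both arrows or neither. The only obstruction there is that an image cycle might become \emph{strong} in the contracted multiset, and a short counting argument (a pair consecutive in some $c_i$ is good for every $c_j$, and for a fixed $d_i$ at most two such pairs can fail to be good for it, while there are at least nine candidate pairs) produces a pair that is good for one cycle from each partition, so the induction always goes through. If you want to salvage your route, you either need to prove the distance-$2$ claim (or exhibit a counterexample and then build longer chains explicitly), or switch to a reduction of $|U|$ for which the failure mode is as controllable as in the contraction argument.
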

\begin{proof}
We will prove it by induction on the number of arrows in $U$. Let
$a_{1}\neq a_{2}$be two edges of $U$, such that $t(a_{1})=s(a_{2})$,
we will denote by $U(a_{1},a_{2})$ the multiset we obtain by replacing
the arrows $a_{1}$ and $a_{2}$ with a single arrow $a_{12}$ pointing
from $s(a_{1})$ to $t(a_{2})$. If a primitive primitive cycle $c$
of $U$ contained both $a_{1}$ and $a_{2}$ then we will denote by
$c(a_{1},a_{2})$ the primitive cycle in $U(a_{1},a_{2})$ we obtain
by replacing $a_{1}$ and $a_{2}$ in $c$ by $a_{12}$, if $c$ contains
neither $a_{1}$ nor $a_{2}$ then $c(a_{1},a_{2})$ will denote the
same primitive cycle in $U(a_{1},a_{2})$. For a primitive cycle $c$
in $U(a_{1},a_{2})$ we will denote by $c'$ the cycle of $U$ we
get by replacing $a_{12}$ by $a_{1}$ and $a_{2}$ if $c$ contained
it. Note that $c$ might be a primitive cycle or a union of two primitive
cycles. We will say that a pair $a_{1},a_{2}$ is good for some weak
primitive cycle $c$ if, $t(a_{1})=s(a_{2})$, $c$ contains either
both $a_{1}$ and $a_{2}$ or neither of them, and the image of $c(a_{1},a_{2})$
is also a weak primitive cycle in $U(a_{1},a_{2})$.

Suppose now that there are two partitions of $U$ into weak primitive
cycles $c_{1},c_{2}...c_{n}$ and $d_{1},d_{2}...d_{k}$ . If we can
find a pair of arrows $a_{1},a_{2}$ that is good for some $c_{i}$
and some $d_{j}$ then we can consider $c_{i}(a_{1},a_{2})$ and $d_{j}(a_{1},a_{2})$
which are equivalent in $U(a_{1},a_{2})$ by the induction hypothesis.
This means there is a sequence of cycles $e_{1},...,e_{n}$ in $U(a_{1},a_{2})$
as in the definition above. Clearly $e_{i}'+e_{i+1}'\leq U$ so the
sequence $e_{1}',...,e_{n}'$ shows us that $c_{i}\sim d_{j}$ (if
some of the $e_{i}'$ is a union of two primitive cycles we can replace
it by any one of these primitive cycles in the sequence). Clearly
if this holds then all of the $c_{i}$are equivalent to all of the
$d_{i}$. 

Suppose now that there is no such pair of arrows. We can suppose that
all of the primitive cycles listed contain at least two arrows, since
if $U$ contains a loop then it is disjoint form any other primitive
cycle, thus all of the primitive cycles of $U$ are equivalent. A
pair of arrows $a_{1},a_{2}$ that are consecutive in some $c_{i}$
will clearly be good for all of the $c_{j}$, and they will also be
disjoint from at least one of the $d_{i}$. If $a_{1},a_{2}$ is not
good for this $d_{i}$ then the $d_{i}(a_{1},a_{2})$ has to be a
strong primitive cycle of $U(a_{1},a_{2})$. This means that $k=3$
and in $U$ the complement of $d_{i}$ is the union of two primitive
cycles which only intersect each other in the vertex $t(a_{1})=s(a_{2})$.
This implies that $s(a_{1})\neq t(a_{2})$, so all of the $c_{i}$-s
need to have a length of at least three, thus there is at least nine
pair of arrows that appear consecutively in one of the $c_{i}$. On
the other hand for a given $d_{i}$ there can be no more than two
pairs of consecutive arrows that are disjoint from $d_{i}$ but are
not good for $d_{i}$, so at least three of the pairs that appear
consecutively in one of the $c_{i}$ will also be good for one of
the $d_{i}$. \end{proof}
\begin{cor}
\label{cor:generating set}The ideal of relations is generated by
the strong relations. For a multiset of arrows $U$ that can be obtained
as the union of two cylces, let $c_{U1}d_{U1},...,c_{Uk}d_{Uk}$ denote
the monomials corresponding to all possible partitions of $U$ into
the multiset sum of two primitive cycles, and $m_{U}$ denote the
monomial corresponding to some partition of $U$ into three or more
primitive cycles if there is such. The binomials $c_{U1}d_{U1}-c_{U2}d_{U2},\,\ldots,c_{U(k-1)}d_{U(k-1)}-c_{Uk}d_{Uk},\, c_{Uk}d_{Uk}-m_{U}$
, where $U$ ranges over all multisets as above, form a minimal generating
system for the ideal of relations. 
\end{cor}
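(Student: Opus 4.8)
The plan is to establish Corollary~\ref{cor:generating set} by combining Theorem~\ref{thm:weakcyc} with the recursive/multigraded bookkeeping of $E(U)$ developed just before Theorem~\ref{thm:weakcyc}. First I would recall that a minimal homogeneous (equivalently, multigraded) generating set of $Ker(\varphi)$ can be built multiset by multiset: for each arrow-multiset $U$ we must add exactly $E(U) = |\{\text{classes of }\thickapprox_U\}|-1$ new binomials, and that $E(U)=0$ for all but finitely many $U$. So the content of the corollary is twofold: (a) only multisets $U$ that are expressible as a union of two primitive cycles can have $E(U)>0$, and (b) for such $U$, the displayed binomials are precisely a choice of $E(U)$ independent relations whose classes under $\thickapprox_U$ exhaust all of them.

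For step (a): if $E(U)>0$ then $U$ admits at least two $\thickapprox_U$-inequivalent partitions into primitive cycles, hence in particular $U$ is a union of primitive cycles and is Eulerian (being a union of cycles, every vertex has equal in- and out-degree). I would then argue that $U$ must in fact be a union of exactly two primitive cycles, at least one of which is strong. Indeed, by Theorem~\ref{thm:weakcyc} all cycles weak in $U$ lie in a single $\sim_U$-class; by the Proposition, $\sim_U$ and $\thickapprox_U$ have the same number of classes, so if $E(U)>0$ there must exist a cycle that is \emph{strong} in $U$, i.e.\ whose complement in $U$ is a single primitive cycle. Hence $U = c + d$ with $c,d$ primitive, so $U$ is a union of two primitive cycles as claimed, and every multiset with $E(U)=0$ contributes nothing.

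For step (b): fix such a $U$ and let $c_{U1}d_{U1},\dots,c_{Uk}d_{Uk}$ be the monomials of all two-cycle partitions and $m_U$ the monomial of a partition into $\ge 3$ cycles when one exists. By Theorem~\ref{thm:weakcyc} every partition of $U$ into three or more primitive cycles consists of weak cycles and hence lies in one $\sim_U$-class, which under the Proposition translates to: all $\ge 3$-part partitions are mutually $\thickapprox_U$-equivalent, and each such partition is $\thickapprox_U$-equivalent to some two-cycle partition through the common-cycle chain of the definition (since a weak cycle is $\sim_U$-equivalent to a strong one). Therefore the $\thickapprox_U$-classes of $U$ are represented among the $k$ two-cycle partitions together with (at most one more class for) $m_U$; conversely each listed binomial $c_{Ui}d_{Ui}-c_{U(i+1)}d_{U(i+1)}$ and $c_{Uk}d_{Uk}-m_U$ lies in $Ker(\varphi)$ because the two monomials have equal arrow multiset $U$. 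The telescoping chain of these $k$ (or $k-1$, when no $m_U$ exists) binomials connects all the listed monomials, so after adding them the number of $\thickapprox_U$-classes drops to one; and since before adding them there were at least that many classes (each two-cycle partition and $m_U$ could a priori be distinct), no smaller set works — i.e.\ $E(U)$ equals the number of binomials we add at $U$, one less than the number of monomials listed. Summing over the finitely many relevant $U$ gives a minimal generating system of the stated form, and in particular every generator is a strong relation.

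The main obstacle I anticipate is pinning down step (b) rigorously: one must be careful that the telescoping list $c_{U1}d_{U1}-c_{U2}d_{U2},\dots$ really does merge \emph{all} $\thickapprox_U$-classes into one, including the class of $m_U$ and the classes of any other multi-part partitions, and that the count of listed binomials is exactly $E(U)$ rather than an overcount. This requires invoking Theorem~\ref{thm:weakcyc} in the form ``the weak cycles form a single class'' and then chasing the equivalence of a weak cycle with a strong one inside the same $\sim_U$-class, which is the delicate point; the rest is the routine multigraded argument already set up in the text. One should also double-check the edge case where $k=1$ (a unique two-cycle partition) but a multi-part partition $m_U$ exists, so that the single binomial $c_{U1}d_{U1}-m_U$ is the only contribution, and the case $k\ge 1$ with no $m_U$, where the last binomial of the list is absent.
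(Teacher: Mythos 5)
Your overall framework---reduce to counting $\thickapprox_U$-classes multiset by multiset, use Theorem~\ref{thm:weakcyc} to show that only multisets expressible as a union of two primitive cycles contribute, and note that every listed binomial is strong---is the right one, and your step (a) is correct. But step (b) contains a genuine error at exactly the point you flag as delicate: you assert that ``a weak cycle is $\sim_U$-equivalent to a strong one,'' and hence that every partition into three or more cycles is already $\thickapprox_U$-equivalent to some two-cycle partition. This is false, and if it were true the binomial $c_{Uk}d_{Uk}-m_U$ would be redundant, contradicting the minimality you are trying to prove. The correct statement is the opposite: if $c$ is strong in $U$ with complement $d=U-c$, then the only primitive cycle $e$ with $c+e\le U$ is $e=d$, because a primitive cycle whose arrow multiset is contained in that of the primitive cycle $d$ must equal $d$ (starting from any arrow and repeatedly following the unique outgoing arrow of $d$, one is forced to traverse all of $d$). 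Equivalently, a two-cycle partition $\{c,d\}$ shares no cycle with any other partition of $U$, so each strong pair $\{c_{Ui},d_{Ui}\}$ is an isolated $\sim_U$-class, distinct from the single class of weak cycles and from every other strong pair.

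This missing observation is also precisely what you need for the lower bound. Your justification of minimality---``each two-cycle partition and $m_U$ could a priori be distinct''---only says the classes might be distinct, not that they are; without more you have shown that the listed binomials generate, i.e.\ $E(U)\le k$ (resp.\ $k-1$), but not the reverse inequality. With the observation above, the $\thickapprox_U$-classes of $U$ are exactly the $k$ singleton classes of the two-cycle partitions plus, when a partition into three or more cycles exists, the single class of all such partitions supplied by Theorem~\ref{thm:weakcyc}; hence $E(U)$ equals the number of binomials listed at $U$, the telescoping chain collapses all classes to one, and minimality follows. The remainder of your argument (the recursive multigraded bookkeeping, finiteness, $E(U)=0$ for all other multisets, and the edge cases $k=1$ and ``no $m_U$'') is fine.
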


\section{Complete intersection quiver settings}

We will say that two primitive cycles intersect each other trivially
if the multiset sum of their arrows does not yield any relations (for
arrow-disjoint primitive cycles this is same as saying that their
union is a coregular quiver). It is easy to see that two such primitive
cycles are either vertex-disjoint or their intersection is a single
directed path. 
\begin{lem}
If $c$ is a primitive cycle that intersects all of the primitive
cycles of $Q$ trivially then the local quiver $Q'$ of $Q$ obtained
by gluing the vertices of $c$ is a C.I. if and only if $Q$ is a
C.I.\end{lem}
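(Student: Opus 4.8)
My plan is to prove the two implications of the equivalence separately; one is immediate. Gluing the vertices of the complete subquiver $\langle c\rangle$ spanned by $c$ forms a local quiver of $Q$: $\langle c\rangle$ is strongly connected (it contains $c$) with all vertices one-dimensional, so it carries a simple representation by Theorem~\ref{thm:Simple}. Hence $Q'$ is a descendant of $Q$, and by Remark~\ref{rem:LocalQ} a C.I.\ quiver setting has only C.I.\ descendants; so $Q$ a C.I.\ implies $Q'$ a C.I. The content of the lemma is the converse, which is what I would prove below.

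Because all vertices have dimension one, $Q$ (resp.\ $Q'$) is a C.I.\ exactly when the minimal number of generators of its ideal of relations equals $F(Q)$ (resp.\ $F(Q')$), and this number is always at least $F$. So it is enough to show that (i) $F(Q)=F(Q')$ and (ii) the two minimal generator counts agree. For this it is convenient to replace the honest local quiver by the quiver obtained from $Q$ by identifying the vertices of $c$ to one vertex $w$ and keeping all arrows; the two differ only in the number of loops at $w$, hence only in a polynomial factor by Lemma~\ref{lem:RII}, which does not affect the C.I.\ property. With $\ell$ the length of $c$ we then have $|V'|=|V|-\ell+1$ and $|A'|=|A|$, while the primitive cycles of $Q'$ split as: the $\ell$ loops at $w$ (the images of the arrows of $c$); the primitive cycles disjoint from $c$, carried over unchanged; and the primitive cycles through $w$, which --- using that every primitive cycle of $Q$ other than $c$ meets $c$ in a single directed path (possibly a single vertex) --- are in bijection with the primitive cycles of $Q$ meeting $c$ (injectivity because the directed arc of $c$ joining two prescribed vertices is unique; surjectivity is clear). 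Hence $|C'|=|C|+\ell-1$, and so $F(Q')=|C'|+|V'|-|A'|-1=|C|+|V|-|A|-1=F(Q)$. (The count is unaffected by any further arrows inside the vertex set of $c$.)

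For (ii) I would invoke Corollary~\ref{cor:generating set}: a minimal generating set is put together from the arrow-multisets $U$ that are a union of two primitive cycles, each $U$ contributing a number of binomials determined by the number $k_U$ of its partitions into two primitive cycles and whether it has a partition into three or more. The hypothesis that $c$ meets every primitive cycle trivially is exactly the statement that every multiset $c+d$ ($d$ a primitive cycle) has a \emph{unique} partition into primitive cycles; so such a $U$ contributes nothing, and $c$ does not occur in the two-cycle partition of any other such $U$. Hence only $U=d_1+d_2$ with $d_1,d_2\ne c$ matter. For such $U$ let $U^\flat$ be obtained from $U$ by deleting all arrows of $c$; equivalently, replace each $d_i$ by the primitive cycle $\Phi(d_i)$ through $w$ of the loop-free quiver $Q'_0$ (the quiver $Q'$ with the loops at $w$ removed), where $\Phi$ is the bijection of the previous paragraph between the primitive cycles of $Q$ different from $c$ and the primitive cycles of $Q'_0$. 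I would then show that $U\mapsto U^\flat$ matches the contributing multisets of $Q$ with those of $Q'_0$, with $E_Q(U)=E_{Q'_0}(U^\flat)$, by proving: (a) the partitions of $U$ into primitive cycles of $Q$ correspond bijectively to the partitions of $U^\flat$ into primitive cycles of $Q'_0$ --- delete the copies of $c$ and apply $\Phi$ one way, apply $\Phi^{-1}$ and adjoin the uniquely determined number of copies of $c$ restoring the $c$-arrows of $U$ the other way; and (b) $\Phi$ carries the relation $\sim_U$ on the primitive cycles $\ne c$ to $\sim_{U^\flat}$. Granting (a) and (b), the contributions of $U$ and $U^\flat$ coincide for every $U$, and together with $\mathbb{C}[iss_\alpha Q']=\mathbb{C}[iss_\alpha Q'_0]\otimes\mathbb{C}[X_1,\dots,X_\ell]$ (Lemma~\ref{lem:RII}) this gives (ii).

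The main obstacle is (a)--(b): understanding partitions and $\sim_U$ under deletion of the arrows of $c$. The delicate point is the backward inclusion, that $\Phi(d_1)+\Phi(d_2)\le U^\flat$ forces $d_1+d_2\le U$ --- a priori the directed $c$-arcs lying inside $d_1$ and $d_2$ could jointly overshoot the part of $U$ that sits on $c$ --- and, dually, that each partition of $U^\flat$ does lift once enough copies of $c$ are adjoined. Both must be squeezed out of the Eulerian balance of $U$ at the vertices of $c$ together with the single-directed-path shape of the intersections of $c$ with the other primitive cycles of $Q$. A convenient way to organise this is to transport everything to $Q'$ rather than $Q'_0$: under the bijection on arrows, $U$ \emph{is} an arrow-multiset $\bar U$ of $Q'$ with loop-part the restriction of $U$ to the arrows of $c$ and non-loop part $U^\flat$; the partitions of $\bar U$ into primitive cycles of $Q'$ are precisely the partitions of $U^\flat$ in $Q'_0$ with the forced length-one loop-cycles adjoined; and the problem becomes matching the partitions of $\bar U$ in $Q'$ with those of $U$ in $Q$, which is the heart of the matter.
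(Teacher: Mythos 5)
Your easy direction, the computation $F(Q')=F(Q)$, and the reduction of everything via Corollary \ref{cor:generating set} to comparing the numbers $E(U)$ all match the intended argument. But the proposal stops exactly where the work begins: claims (a) and (b) are introduced with ``I would then show'' and the text ends by declaring their verification ``the heart of the matter,'' so nothing is actually proved about how partitions and $\sim_U$ behave under deletion of the arrows of $c$. Worse, the per-multiset identity $E_Q(U)=E_{Q'_0}(U^\flat)$ around which the plan is organised is false: the map $U\mapsto U^\flat$ is not injective, since two multisets of $Q$ differing by a copy of $c$ have the same image, and their $E$-values need not separately agree with that of the image. Concretely, let $Q$ have vertices $x,y,z,w$, let $c$ be the $3$-cycle $x\to z\to y\to x$, and add the four arrows $y\to w$, $w\to x$, $x\to w$, $w\to y$. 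The primitive cycles are $c$, $e_1=(y\to w\to y)$, $e_2=(x\to w\to x)$, $d_1=(x\to z\to y\to w\to x)$, $d_2=(y\to x\to w\to y)$, and each multiset $c+d$ has a unique partition, so $c$ meets every primitive cycle trivially. Now $U_1:=d_1+d_2=c+e_1+e_2$ and $U_2:=e_1+e_2$ satisfy $U_1^\flat=U_2^\flat$ (the four arrows at $w$, each once), yet $E_Q(U_2)=0$ (unique partition $\{e_1,e_2\}$) while $E_{Q'_0}(U_2^\flat)=1$ (two partitions, $\{\Phi(d_1),\Phi(d_2)\}$ and $\{\Phi(e_1),\Phi(e_2)\}$); the missing relation lives on $U_1$, where $E_Q(U_1)=1$. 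In particular your claim (a) fails for $U_2$: the partition $\{\Phi(d_1),\Phi(d_2)\}$ of $U_2^\flat$ does not lift to any partition of $U_2$, only to one of $U_1=U_2+c$.

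The identity that is actually true, and that the paper's proof establishes, aggregates over the whole fibre of the deletion map: for an Eulerian multiset $U'$ of $Q'$, with $U$ its minimal Eulerian preimage not containing $c$, one has $E(U')=\sum_{k\geq 0}E(U+k\cdot c)$; summing over all $U'$ then exhausts the Eulerian multisets of $Q$, giving equality of the minimal numbers of generators, which together with $F(Q')=F(Q)$ yields the equivalence of the C.I.\ properties. The triviality hypothesis is used to control the partitions involving copies of $c$ inside this fibre sum (for instance to see that multisets of the form $c+d$ have $E=0$, and that strong partitions of the various $U+k\cdot c$ account exactly for the strong partitions of $U'$) --- not to force a multiset-by-multiset bijection. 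If you replace the matching $U\leftrightarrow U^\flat$ by this fibre-wise bookkeeping, your outline becomes the correct proof; as written, the central step is both unproved and, in the form stated, untrue.
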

\begin{proof}
If $Q$ is C.I. then all of its local quivers are C.I.-s. Suppose
$Q$ is not a C.I.. Since $Q'$ is a local quiver of $Q$ $codim(\mathbb{C}[iss_{\alpha}Q'])\leq codim(\mathbb{C}[iss_{\alpha}Q])$.
We can assume that the edges of $c$ are already deleted from $Q'$
(they are all loops in $Q'$). Then the primitive cycles of $Q'$
are in bijection with the primitive cycles of $Q$ that are not $c$,
since the image of a primitive cycle that intersects $c$ trivially
remains primitive in $Q'$. Let $U'$ be an Eulerian multiset in $Q'$
and $U$ be the preimage of this multiset that does not contain $c$.
The number of strong partitions of $U$ equals the sum of the numbers
of strong partitions of the multisets $U+k*c$ in $Q$. Also $U'$
has a weak partition if and only if $U$ does. So by Corollary \ref{cor:generating set}
we see that $E(U')=\sum_{k\in\mathbb{N}}E(U+k*c)$. If we sum the
left side of the equation over all Eulerian multisets of $Q'$ the
sum on the right will run over all Eulerian multisets of $Q$, so
a minimal generating set for the ideal of relations has the same size
for both quivers, thus $Q'$ can not be a C.I.. 
\end{proof}
We will call a pair of vertices a \textit{connected pair} if there
are arrows both ways between them.
\begin{thm}
\label{thm: RIV}Let $(Q,\alpha)$ be a quiver with $\alpha=(1,...,1)$
, and $(v_{1},v_{2})$ a connected pair in $Q$. Let $(Q',\alpha')$
denote the local quiver of $Q$ we get by gluing the vertices $v_{1}$
and $v_{2}$. Suppose at least one of the following holds:

a) There are exactly two paths from $v_{1}$ to $v_{2}$ and exactly
two paths from $v_{2}$ to $v_{1}$.

b) There is exactly one path from $v_{1}$ to $v_{2}$.

c) There is exactly one path from $v_{2}$ to $v_{1}$.

Then we have: $(Q,\alpha)$ is a complete intersection if and only
if $(Q',\alpha')$ is a complete intersection.\end{thm}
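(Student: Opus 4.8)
The strategy is to count minimal generators of the ideal of relations on both sides via the local invariant $E(U)$ introduced in Section 5, exactly as in the previous lemma. Write $w$ for the vertex of $Q'$ obtained by gluing $v_1$ and $v_2$. Since $(Q',\alpha')$ is a local quiver of $Q$, Remark \ref{rem:LocalQ} already gives one direction: if $Q$ is a C.I. then so is $Q'$. For the converse I would like to show that $Q$ and $Q'$ have the same codimension and that a minimal generating set for the ideal of relations has the same cardinality for both. The point of hypotheses (a)--(c) is precisely to make the combinatorics of primitive cycles passing through the edge between $v_1$ and $v_2$ controllable. First I would set up the natural graph-homomorphism $\pi\colon Q\to Q'$ and analyze how primitive cycles behave under $\pi$: a primitive cycle of $Q$ that does not use both an arrow into $\{v_1,v_2\}$ and an arrow out in a way that visits the pair twice maps to a primitive cycle of $Q'$, while a primitive cycle of $Q$ that passes through the connected pair ``twice'' (once through $v_1\to v_2$ and once through $v_2\to v_1$, say) maps to a non-primitive cycle — a union of two primitive loops at $w$ or a primitive cycle together with a loop. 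Under hypothesis (b) or (c) there is essentially a unique arrow-path in one direction, which severely limits how a cycle can wrap around the pair; under (a) there are exactly four relevant paths and one checks the wrapping cycles directly.

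Next I would compare the primitive cycles of $Q$ with those of $Q'$. After deleting from $Q'$ the loop(s) created by gluing (they contribute only free polynomial variables by Lemma \ref{lem:RII}), the primitive cycles of $Q'$ are in bijection with a distinguished subset of the primitive cycles of $Q$, and the ``missing'' primitive cycles of $Q$ are exactly those that wrap around the connected pair. In case (b) or (c), the uniqueness of the path forces the structure of these wrapping cycles to be rigid enough that $F(Q)-F(Q')$ equals the number of extra free variables, i.e. $codim(iss_\alpha Q)=codim(iss_{\alpha'}Q')$; the arithmetic here is a direct count of $|C|, |V|, |A|$ before and after the reduction. In case (a) the count is slightly more involved but still finite and explicit. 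The heart of the argument is then the identity
\[
E(U')=\sum_{k\in\mathbb{N}} E\bigl(U+k\ell_1+\cdots\bigr)
\]
(summed appropriately over the contributions of the glued loops $\ell_i$), for each Eulerian multiset $U'$ of $Q'$ with preimage $U$ in $Q$ not containing those loops. This is proved the same way as in the preceding lemma: by Corollary \ref{cor:generating set} it suffices to match up strong partitions of $U$ (and the existence of weak partitions) with strong partitions of $U'$, and hypotheses (a)--(c) are exactly what guarantees that strong partitions of an Eulerian multiset in $Q$ correspond bijectively to strong partitions of its image in $Q'$ together with a choice of how many times to traverse each glued loop. Summing over all Eulerian multisets then shows the two ideals of relations have minimal generating sets of equal size, so $Q$ is a C.I. iff $Q'$ is.

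**Main obstacle.** The delicate point is the cycle bookkeeping across the gluing, and in particular verifying that under (a)--(c) no ``new'' relation appears in $Q'$ that is not already accounted for in $Q$ (and vice versa) — equivalently, that the equivalence relation $\sim_{U'}$ on primitive cycles of $Q'$ refines correctly to $\sim_U$ on primitive cycles of $Q$ once we sum over the loop multiplicities. I expect this to require a careful case analysis of what a primitive cycle through $w$ can look like: whether it lifts to a single primitive cycle of $Q$, or to a cycle that must use two of the finitely many $v_1$--$v_2$ paths, and in the latter case showing — using the path-counting hypotheses — that all such wrappings are already mutually equivalent (so contribute nothing new to $E$) or are matched by an equal count on the $Q$ side. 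Hypotheses (a), (b), (c) are presumably each handled separately here, with (b) and (c) being symmetric and the easier cases, and (a) being the one where one must genuinely enumerate the four paths and check that the resulting wrapping cycles do not create an uncounted generator. Once this combinatorial correspondence is nailed down, assembling it into the equality of minimal generating set sizes — and hence the C.I. equivalence — is routine along the lines already used in the previous lemma.
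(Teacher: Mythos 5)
Your overall strategy---compare the codimension change against the change in the size of a minimal binomial generating set, computed multiset-by-multiset via $E(U)$ and Corollary \ref{cor:generating set}, with cases b) and c) reduced to the preceding lemma on trivially intersecting cycles---is the same as the paper's. But there is a genuine gap in case a), and it sits exactly where the real content of the theorem is. You assert that $codim(iss_\alpha Q)=codim(iss_{\alpha'}Q')$ and that $E(U')=\sum_k E(U+k\ell_1+\cdots)$ for every Eulerian multiset. Both claims are false in the essential sub-case of a), namely when the two extra paths $\Gamma_1$ (from $v_2$ to $v_1$) and $\Gamma_2$ (from $v_1$ to $v_2$) are disjoint and hence form a primitive cycle $s$. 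There $Q'$ loses two primitive cycles, two arrows and one vertex, so $F(Q')=F(Q)-1$: the codimension genuinely drops by one. Correspondingly, exactly one generator is lost: on the multiset $U_q$ consisting of the arrows of $q_1=(a_1,\Gamma_2)$ and $q_2=(a_2,\Gamma_1)$ one has the relation $q_1q_2-ps$ (where $p$ is the $2$-cycle on $v_1,v_2$), giving $E(U_q)=1$ in $Q$, while the image of $U_q$ in $Q'$ contributes $0$. So the identity you propose to prove has a $+1$ discrepancy at $U_q$, and a proof built on establishing that identity everywhere cannot close.

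The correct bookkeeping, which your proposal does not reach, is: $\sum_{U}E(U)$ over multisets of $Q$ containing $a_1$ or $a_2$ equals exactly $1$ (this requires checking, via Theorem \ref{thm:weakcyc}, that every such multiset other than $U_q$ has $E(U)=0$, because its partitions either all share the factor $q_1$ or $q_2$, or all have three or more cycles); and for multisets avoiding both arrows one only needs the inequality $E(U)\le E(U')$, not a sum identity. Together these give $\sum E(U)\le \sum E(U')+1$, matching the codimension drop of $1$. You should also note the easy sub-case of a) where the two extra paths intersect each other: there $p$ intersects every primitive cycle trivially and the previous lemma applies directly. Your hedge that in case a) ``the count is slightly more involved but still finite and explicit'' papers over precisely the one relation and the one unit of codimension that make the theorem true.
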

\begin{proof}
Cases b) and c) follow from the previous lemma. Case a) is also clear
in the case when the extra paths between $v_{1}$ and $v_{2}$ intersect
each other, since then the cycle formed by the arrows between $v_{1}$and
$v_{2}$ intersect all cycles trivially. So we only have to treat
the case when the two paths between $v_{1}$ and $v_{2}$ do not intersect
each other and thus form a primitive cycle. The arrow from $v_{1}$
to $v_{2}$ will be noted by $a_{1}$ and the the arrow from $v_{2}$
to $v_{1}$ will be noted by $a_{2}$. For the sake of simplicity
we will denote by $Q'$ the quiver from which the image of $a_{1}$
and $a_{2}$ has been deleted (since these are loops after the gluing
this does not change the C.I. property). If $U$ is a multiset of
arrows in $Q$ then we will write $U'$ for its image in $Q'$. $\Gamma_{1,2}$
will denote the two additional paths from $v_{1}$ to $v_{2}$ and
from $v_{2}$ to $v_{1}$ respectively. The cycles $(a_{1},\Gamma_{2})$
and $(a_{2},$$\Gamma_{1}$) will be noted by $q_{1}$ and $q_{2}$
, the cycle $(v_{1},v_{2})$ will be denoted by $p$ and the cycle
$(\Gamma_{1},\Gamma_{2})$ will be denoted by $s$. These are all
of the cycles that go through both $v_{1}$ and $v_{2}$ in $Q$.
It is easy to see that $Q'$ will have two less cycles, two less edges
and one less vertex compared to $Q$ , thus $codim(\mathbb{C}[iss_{\alpha}Q'])=codim(\mathbb{C}[iss_{\alpha}Q])-1$.
Using our earlier notation we need to show that \[
\sum_{U\subseteq A(Q)}E(U)\leq\sum_{U\subseteq A(Q')}E(U)+1\]
, where the left hand sum runs over all the arrow multisets of $Q$
and the right hand sum runs over all the arrow multisets in $Q'$.
The left hand sum can be written as \[
\sum_{a_{1}\in U\, or\, a_{2}\in U}E(U)+\sum_{a_{1}\notin U\, and\, a_{2}\notin U}E(U)\]
and the right hand sum can be written as \[
\sum_{a_{1}\notin U\, and\, a_{2}\notin U}E(U').\]
Let us now look at $\sum_{a_{1}\in U\, or\, a_{2}\in U}E(U)$. If
$a_{1}\in U$ but $a_{2}\notin U$ then all partitions of $U$ will
be of form $q_{1}*t$ (where $t$ is a partition of $U\backslash k_{1}$),
since the only cycle containing $a_{1}$ but not $a_{2}$ is $q_{1}$.
These partitions are all equivalent since $q_{1}*t_{1}-q_{1}*t_{2}=q_{1}*(t_{1}-t_{2})$
and $t_{1}-t_{2}$ is a relation on a proper subset of $U$. So for
such a $U$ we have $E(U)=0$, and the same can be said when $a_{1}\notin U$
and $a_{2}\in U$. Let $U_{q}$ be the multiset formed by the arrows
of $q_{1}$ and $q_{2}$. A partition of $U_{q}$ is either $q_{1}*q_{2}$
or of form $p*s$. So we have $E(U_{q})=1$. Let us now suppose that
$U$ contains both $a_{1}$and $a_{2}$ but it is not $U_{q}$. A
partition of $U$ in this case is either of form $p*t$ or of form
$q_{1}*q_{2}*\varrho$ ($t$ and $\varrho$ are monomials corresponding
to partitions of the remaining arrows). All of these are partitions
into three or more cycles, so by \ref{thm:weakcyc} in this case we
also get $E(U)=0$. Thus $\sum_{e_{1}\in U\, or\, e_{2}\in U}E(U)=1$.

If $U$ is a multiset that does not contain $a_{1}$ and $a_{2}$,
then all of its partitions into strong cycles will be also partitions
of $U'$ into strong cycles moreover if $U$ has a partition into
weak cycles then $U'$ also has a parition into weak cycles . It follows
that \[
\sum_{e_{1}\notin U\, and\, e_{2}\notin U}E(U)\leq\sum_{e_{1}\notin U\, and\, e_{2}\notin U}E(U')\]
 which is what we needed to prove.
\end{proof}
We will refer to a reduction step established in the above theorem
as RIV. Now we will proceed to show that the reduction steps RI-IV,
and decomposition into prime components will reduce all strongly connected
C.I. quivers to a single vertex without loops, and also obtain a characterization
of C.I. quivers via forbidden descendants. 

We will denote the quiver setting

\includegraphics[scale=0.7]{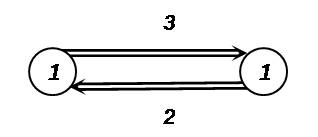}\\
by G1. and the quiver setting

\includegraphics[scale=0.7]{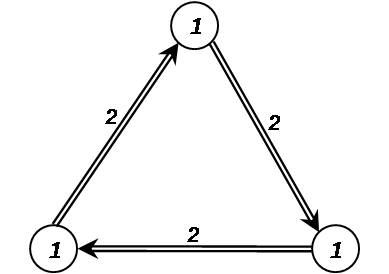}

by G2. It is easy to check that none of them are complete intersections. 
\begin{lem}
\label{lem:23conpairnotci}If $Q$ is a quiver setting in which there
is a connected pair $(v_{1},v_{2})$, and there are at least three
paths from $v_{1}$ to $v_{2}$ and at least two paths from $v_{2}$
to $v_{1}$ then G1 or G2 is a descendant of $Q$.\end{lem}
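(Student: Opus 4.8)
The plan is to reduce to a small quiver by contracting structure around the connected pair, then exhibit $G1$ or $G2$ as a descendant. Let $(v_1,v_2)$ be the connected pair with arrows $a_1:v_1\to v_2$ and $a_2:v_2\to v_1$, and suppose there are at least three paths $\Gamma_1,\Gamma_2,\Gamma_3$ from $v_1$ to $v_2$ and at least two paths $\Delta_1,\Delta_2$ from $v_2$ to $v_1$ (here $a_1$ may be one of the $\Gamma_i$ and $a_2$ one of the $\Delta_j$). First I would pass to a subquiver: keep only the vertices and arrows lying on these five chosen paths (plus $a_1,a_2$ if not already included). Since a subquiver of $Q$ is a descendant, and a non-C.I. descendant of a subquiver is a non-C.I. descendant of $Q$, it suffices to prove the claim for this smaller quiver, which is strongly connected and has all its arrows on a bounded number of $v_1$--$v_2$ paths.

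Next I would use reductions RI, RII to clean up the intermediate vertices. Every vertex other than $v_1,v_2$ that lies on the chosen paths has in-degree and out-degree controlled by how many of the paths pass through it; in particular a vertex lying on exactly one path has in-degree and out-degree $1$, so $\chi_Q(\alpha,\epsilon_v)=0$ and RI applies, merging the arrows around it. Iterating RI (and RII to clear any loops that appear) contracts each path to... well, not necessarily a single arrow, because two paths may share a segment. The honest statement is: after exhausting RI/RII we are left with a reduced quiver on a vertex set that still contains (the images of) $v_1,v_2$, in which the $v_1\to v_2$ connectivity is $\ge 3$ and the $v_2\to v_1$ connectivity is $\ge 2$, and which is as small as possible. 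The key combinatorial observation to nail down is that, because paths can only branch and re-merge, the reduced picture is either (i) $v_1$ and $v_2$ joined by $\ge 3$ parallel arrows one way and $\ge 2$ the other — from which $G1$ (the two-vertex quiver with three arrows each way, trimmed appropriately) is a subquiver — or (ii) there is a third vertex $w$ through which some paths are routed, and one gets the ``double triangle'' configuration containing $G2$ as a subquiver.

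The main obstacle is exactly controlling what RI produces when several of the chosen paths overlap: one must argue that overlapping initial/terminal segments can be handled by choosing the paths more carefully at the outset (e.g. take the paths to diverge as early as possible and reconverge as late as possible, or route through a common branch vertex $w$), so that after contraction one genuinely lands on $G1$ or $G2$ rather than some other reduced quiver. I expect to handle this by a short case analysis on how the three $v_1\to v_2$ paths share vertices: either all three are internally disjoint (giving three parallel arrows after RI, hence $G1$ as a subquiver once we add two of the return arrows), or two of them merge at some vertex $w$, in which case $w$ together with $v_1,v_2$ and the arrows among them, after RI/RII, contains $G2$ as a subquiver. In every case the resulting two- or three-vertex quiver is a descendant of $Q$ and, since $G1$ and $G2$ are not complete intersections and ``is a subquiver of'' preserves descendancy, $G1$ or $G2$ is a descendant of $Q$.
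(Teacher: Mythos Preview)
Your plan has a genuine gap: the claim that exhausting RI/RII on the subquiver spanned by the chosen paths lands you on a two- or three-vertex quiver is false. Consider $v_1,v_2,w,w'$ with arrows $a_1\colon v_1\to v_2$, $a_2\colon v_2\to v_1$, together with $v_1\to w\to v_2$, $v_1\to w'\to v_2$, and the return path $v_2\to w\to w'\to v_1$. This quiver has three paths $v_1\to v_2$ and two paths $v_2\to v_1$, yet every vertex already has in- and out-degree exactly~$2$ (or more), so RI never applies and you are stuck on four vertices. One can cook up similar examples on arbitrarily many vertices by letting the forward and backward paths weave through one another. In such situations neither $G1$ nor $G2$ is visible as a subquiver of the reduced picture; to extract one you must pass to a \emph{local quiver}, i.e.\ contract a suitable strongly connected subquiver, and this is precisely the tool your argument never invokes.

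The paper's proof supplies this missing ingredient by running an induction on the number of vertices. After passing to the subquiver on $a_1,a_2$, two extra $v_1\to v_2$ paths $x_1,x_2$, and one extra $v_2\to v_1$ path $y$, and after exhausting RI, it locates a short cycle touching $v_2$ (built from the first arrow of $y$ and a tail of $x_1$), glues that cycle to obtain a local quiver with strictly fewer vertices, and then either re-applies the induction hypothesis or falls into one of four explicit residual shapes, each of which is checked by hand to yield $G1$ or $G2$. So the ``short case analysis'' you anticipate is real, but it only becomes short \emph{after} the contraction step; without it the overlap patterns of the paths are not under control. If you want to repair your argument, the cleanest fix is to add exactly this: whenever RI stalls on more than two vertices, find a cycle through $v_2$ meeting one of the $x_i$, contract it, and argue that the hypotheses persist in the smaller quiver.
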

\begin{proof}
We prove by induction on the number of vertices in $Q$. If $Q$ has
two vertices, then it contains G1 as a subquiver.

We have an arrow $a_{1}$ from $v_{1}$ to $v_{2}$ and an arrow $a_{2}$
from $v_{2}$ to $v_{1}$, let $x_{1}$ and $x_{2}$ denote the paths
from $v_{1}$ to $v_{2}$ that are not $\{a_{1}\}$, and $y$ denote
the path from $v_{2}$ to $v_{1}$ that is not $\{a_{2}\}$. Let us
regard the sub-quiver $Q$' that is made of these three paths and
the arrows between $v_{1}$ and $v_{2}$. If $Q'$ has a vertex with
in- or out-degree 1, then we can apply RI (which will not change the
number of paths between $v_{1}$ and $v_{2}$) and by the induction
hypothesis $Q$' has a descendant that is G1 or G2 and so does $Q$.
So we only have to look at the cases where all vertices have in- and
out-degrees of at least 2 (so $Q$' is reduced).

First we discuss the case when $y$ consists of a single arrow from
$v_{2}$ to $v_{1}$. Let us look at the vertex where the first arrow
of $x_{1}$ points to. If it is $v_{2}$ then, since $Q$' is reduced,
it can only be G1.

If it is some other vertex $v_{3}$ then since $Q$' is reduced there
has to be another arrow pointing to $v_{3}$, which can only be part
of $x_{2}$. If we delete the arrows that are in $x_{1}$ but not
in $x_{2}$ except for the first arrow of $x_{1}$ we get a quiver
of form:\medskip{}

\includegraphics[scale=0.73]{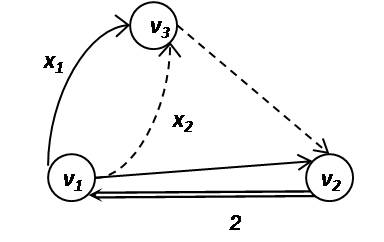}\\
(dashed arrows indicate a single directed path with arbitrarily
many vertices) which once again has a G1 descendant.

If $y$ contains more than one arrow, let $a$ denote its first arrow
and $u$ the vertex $a$ points to. Since $Q'$ is reduced $x_{1}$
or $x_{2}$ has to contain the vertex $u$ as well. We can suppose
$x_{1}$ does. The arrow $a$ and the part of $x_{1}$ that is between
$u$ and $v_{2}$ forms a directed cycle $c$. Now we can regard the
local quiver $Q''$ of $Q'$ that we get by gluing the vertices of
$c$. $Q''$ has less vertices than $Q'$. If in $Q''$ there are
still two paths from $v_{1}$ to $v_{2}$ then it satisfies the conditions
in the proposition and we are done by induction. If in $Q''$ there
is only one path both ways between $v_{1}$ and $v_{2}$ then there
was only one directed path both ways between the cycle $c$ and $v_{1}$
in $Q'$. It follows then that the segment of $x_{1}$ that is between
$v_{1}$ and $u$ is also contained in $x_{2}$. Let $Q_{0}$ be the
subquiver of $Q'$ that consists of $x_{1}$, $a$, and the part of
$x_{2}$ that starts from the first arrow in which it differs from
$x_{1}$ and ends in the first vertex which is also on $x_{1}$ (this
vertex can be $v_{2}$). $Q_{0}$ is of form:\medskip{}

\includegraphics[scale=0.7]{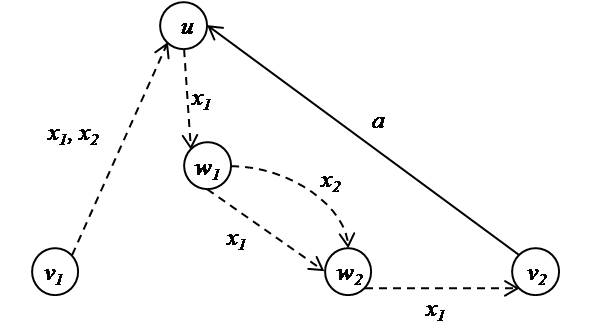}\\
(Including the special cases when $w_{1}$ is $u$ and $w_{2}$
is $v_{2}.$)

Let $Q_{1}$ be the subquiver of $Q'$ we get by adding to $Q_{0}$
the part of $y$ that is between the last vertex of $y$ that is part
of $Q_{0}$ (there is at least one such vertex: $u$) and $v_{1}$,
as well as adding the two arrows between $v_{1}$ and $v_{2}$.

Depending on where $y$ departs from $Q_{0}$, $Q_{1}$ will be one
of the following quivers:\\
\medskip{}
(I)\includegraphics[scale=0.7]{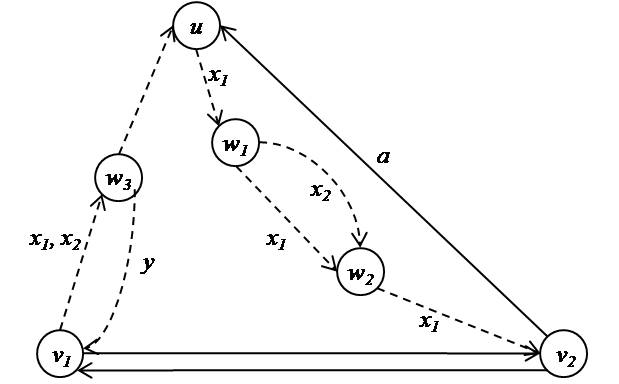}\\
\medskip{}
(II)\includegraphics[scale=0.7]{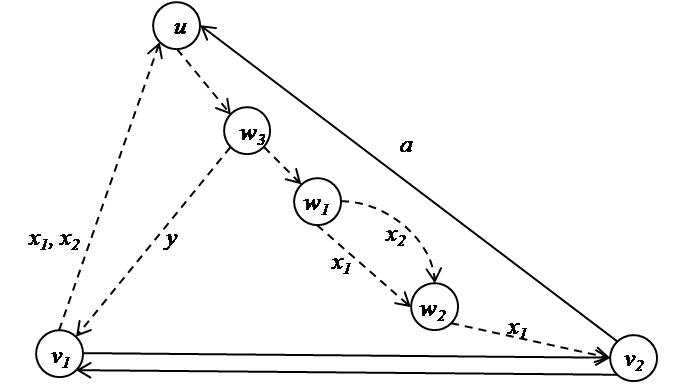}\\
\medskip{}
(III)\includegraphics[scale=0.7]{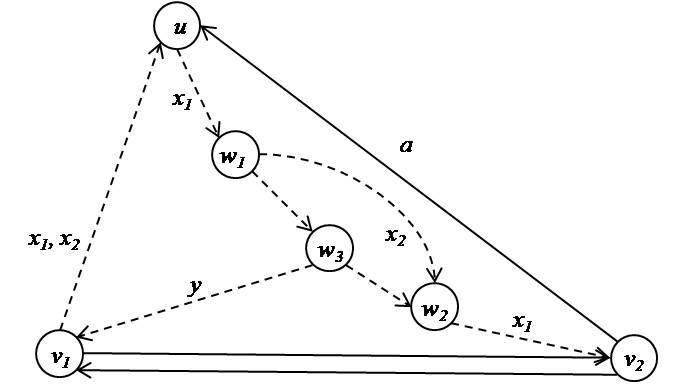}\\
\smallskip{}
(IV)\includegraphics[scale=0.7]{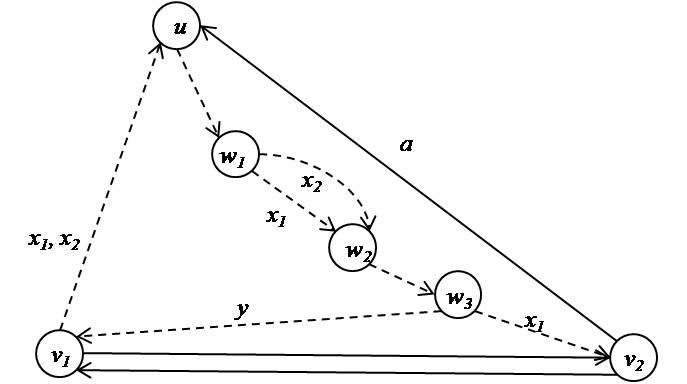}\medskip{}

Case (I): The segments of $x_{1}$ and $y$ between $v_{1}$ and $w_{3}$
form a cycle, and in the local quiver we get by gluing the vertices
of this cycle there will still be at least two paths from $v_{1}$
to $v_{2}$ and a path from $v_{2}$ to $v_{1}$ so we can apply induction.

Case (II) and (III): In both cases if we reduce by RI, we get the
following quiver:

\includegraphics[scale=0.7]{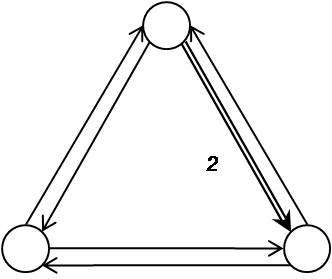}\\
The local quiver we get by gluing the bottom two vertices will
contain G1 as a subquiver.

Case(IV): If we take the local quiver we get by gluing $v_{1}$ and
$v_{2}$ , and reduce it by RI afterwards, the resulting quiver will
be G2.
\end{proof}
For technical purposes we will state the next lemma in a weaker and
stronger form and prove them simultaneously, using parallel induction.
\begin{lem}
\label{lem:noconpairs1}Let $Q$ be a reduced (by RI, RII, and RIII),
strongly connected, prime quiver setting that has at least two vertices
and does not contain any connected pairs. Then G1 or G2 is a descendant
of $Q$.
\end{lem}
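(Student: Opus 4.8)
The plan is to proceed by induction on the number of vertices of $Q$, mirroring the strategy used in Lemma~\ref{lem:23conpairnotci}, but now the absence of connected pairs forces us to \emph{create} a connected pair as a descendant so that we can invoke that lemma. The base case is a quiver on two vertices: since $Q$ is reduced, strongly connected, prime and has no connected pair, one checks directly (the in- and out-degrees are at least $2$, the two vertices are joined only in one direction, contradiction) that there is in fact no such quiver on two vertices, so the base case is vacuous. For the inductive step I would fix a vertex $v$ and an arrow $a$ leaving $v$, pointing to a vertex $w\neq v$ (no loops, since $Q$ is reduced). Because $Q$ is strongly connected there is a directed path $\pi$ from $w$ back to $v$; together with $a$ this gives a primitive cycle $c$ through both $v$ and $w$. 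Form the local quiver $Q'$ obtained by gluing the vertices of $c$ to a single vertex $z$. Then $Q'$ is again strongly connected, and $F$-counting together with Theorem~\ref{thm:local} tells us it is a descendant of $Q$.

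The key point is then a dichotomy on $Q'$. If $Q'$ still has at least two vertices and is still \emph{prime} with no connected pair, I would like to finish by the induction hypothesis once I know it has fewer vertices — which it does, since $c$ had length at least $2$. But $Q'$ need not be reduced or prime: gluing a cycle can create loops (kill by RII), can create vertices of in- or out-degree~$1$ (kill by RI), and can make the quiver decompose as a connected sum (pass to the prime component still containing $z$, or the one that is not coregular). So the careful bookkeeping is: after gluing, repeatedly apply RI, RII and decomposition into prime components, and argue that one of the resulting prime components $Q''$ is again strongly connected with at least two vertices and at most $|V(Q)|-1$ vertices; then either $Q''$ has a connected pair or it does not. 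If it does not, apply the induction hypothesis to $Q''$ directly. If it does have a connected pair $(v_1,v_2)$, I would count the paths between $v_1$ and $v_2$: since $Q''$ is reduced every vertex has in- and out-degree at least $2$, and a short local analysis shows that either there are at least three paths one way and at least two the other — in which case Lemma~\ref{lem:23conpairnotci} gives a G1 or G2 descendant — or the connected pair sits in a configuration on which RIV applies, so we glue it away and recurse on a still smaller quiver.

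This is where the weaker/stronger parallel-induction remark in the statement becomes relevant: to make the recursion close I expect we need to carry a companion statement (the ``stronger form'' promised just before the lemma, presumably allowing one exceptional vertex of low degree, exactly as in the proof of Lemma~\ref{lem:coregularcorrection}), because each application of RI temporarily spoils the ``every vertex has in- and out-degree $\geq 2$'' hypothesis at one vertex, and we must be able to feed such a quiver back into the induction. So the real proof would interleave: (i) the weak form, stated for reduced prime strongly connected $Q$ with no connected pair; and (ii) a stronger form tolerating one bad vertex and perhaps dropping primeness in a controlled way, proved simultaneously.

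The main obstacle, I expect, is precisely the loss of hypotheses under gluing: showing that after contracting $c$ and cleaning up with RI/RII and prime decomposition we land on a quiver to which \emph{either} the induction hypothesis \emph{or} Lemma~\ref{lem:23conpairnotci} applies, without the vertex count ever increasing and without getting stuck in a quiver that is too small or too symmetric to handle. Choosing $c$ well — e.g.\ as a \emph{shortest} cycle through a chosen arrow, so that the glued quiver is as close to reduced as possible, or choosing $v$ to be an extreme vertex in some sense — is likely the crucial trick that makes all the degenerate sub-cases in the path-counting analysis collapse to the handful of pictures (analogues of the graphs graf2--graf7 in the previous lemma) that one can check by hand. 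Everything else is the same kind of finite case analysis on small reduced quivers already carried out for Lemma~\ref{lem:23conpairnotci}.
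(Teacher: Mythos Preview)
Your outline captures the right skeleton --- parallel induction with a companion statement, contracting a shortest cycle, vacuous two-vertex base case --- and that is indeed the paper's architecture. But the way you propose to close the recursion has a genuine gap. Your dichotomy ``after gluing and cleaning, either no connected pair (induct on the weak lemma) or a connected pair (invoke Lemma~\ref{lem:23conpairnotci} or apply RIV and recurse)'' can collapse to a single vertex without ever exhibiting G1 or G2: already for $Q=\text{G2}$ every primitive cycle is Hamiltonian, so gluing any cycle yields one vertex and your procedure returns nothing. More generally, passing to a local quiver and cleaning can land you in a coregular quiver even when $Q$ is not, so a pure ``shrink and recurse'' loop does not by itself force a G1/G2 descendant.

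The paper's companion (Lemma~\ref{lem:noconpairs2}) is sharper than ``allow one vertex of low degree'': it assumes there is a vertex $v$ contained in \emph{every} connected pair, with all other vertices of in/out degree at least~$2$. The purpose of this exact formulation is not just to re-feed a slightly damaged quiver into the induction, but to pin down the glued quiver completely: by induction on Lemma~\ref{lem:noconpairs2}, each prime component of the local quiver obtained from a minimal cycle $c$ must be the single two-vertex exception (two arrows each way). That forces every vertex of $Q$ off $c$ to have exactly two arrows into $c$ and two out of $c$ and nothing else. With this rigid picture of $Q$ in hand, the proof finishes by a finite case analysis on the length $l$ of the minimal cycle relative to $|V|$ (the cases $l=|V|$, $l=|V|-1$, and $l<|V|-1$), each reducing to a short list of explicit subquivers that visibly contain a cycle with three doubled arrows and hence a G2 (or G1) descendant. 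Note that Lemma~\ref{lem:23conpairnotci} is not used anywhere in this argument; the induction on Lemmas~\ref{lem:noconpairs1} and~\ref{lem:noconpairs2} is self-contained.
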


\begin{lem}
.\label{lem:noconpairs2}Let $Q$ be a strongly connected, prime quiver
setting with at least three vertices and no loops. If there is a vertex
$v$ in $Q$ such that $v$ is a member of every connected pair of
$Q$ and any vertex except $v$ has in-degree and out-degree at least
2, then G1 or G2 is a descendant of $Q$..\end{lem}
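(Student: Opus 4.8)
The plan is to prove Lemmas \ref{lem:noconpairs1} and \ref{lem:noconpairs2} simultaneously by induction on the number of vertices, exactly as announced. The base cases should be immediate: for Lemma \ref{lem:noconpairs1} a reduced prime quiver with two vertices and no connected pair forces all arrows to go one way, contradicting strong connectedness, so the statement is vacuous there; for Lemma \ref{lem:noconpairs2} with three vertices one analyzes the few possible configurations by hand, using that the two vertices other than $v$ have in- and out-degree at least $2$ and are not connected to each other.

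For the inductive step of Lemma \ref{lem:noconpairs1}, the idea is to produce, out of a reduced strongly connected prime quiver $Q$ with no connected pairs, a descendant to which either Lemma \ref{lem:23conpairnotci} or one of the two induction hypotheses applies. Since every vertex has in-degree and out-degree at least $2$ and there are no $2$-cycles, $Q$ contains a primitive cycle $c$ of length at least $3$; gluing the vertices of $c$ produces a local quiver $Q'$ with fewer vertices, and the newly created vertex is the only one that might violate the in/out-degree-$\geq 2$ condition. After removing the resulting loops (RII) we should be in a position to apply Lemma \ref{lem:noconpairs2} to $Q'$, provided $Q'$ is prime and strongly connected (strong connectedness is automatic; primeness needs a short argument, possibly passing to a prime component) and provided the glued vertex is a member of every connected pair of $Q'$. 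A connected pair in $Q'$ that avoids the glued vertex would pull back to a connected pair in $Q$ — which does not exist — so the condition is forced; the only subtlety is that $Q'$ might have \emph{no} connected pair at all, in which case one instead runs RI to restore the degree bounds and invokes the induction hypothesis of Lemma \ref{lem:noconpairs1} itself on a smaller quiver.

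For the inductive step of Lemma \ref{lem:noconpairs2}, let $v$ be the distinguished vertex, so all connected pairs pass through $v$. Pick a vertex $u \neq v$ with $u$ and $v$ a connected pair (if no connected pair exists, $Q$ has none and we are back in the situation of Lemma \ref{lem:noconpairs1} after reducing by RI on $v$). Count the paths from $v$ to $u$ and from $u$ to $v$: if there are at least three one way and at least two the other, Lemma \ref{lem:23conpairnotci} immediately gives a G1 or G2 descendant. Otherwise we are in one of the cases a), b), c) of Theorem \ref{thm: RIV} (the RIV reduction step), so gluing $v$ and $u$ preserves the C.I. property; the resulting quiver $Q''$ has one fewer vertex, is strongly connected, and we must check it is prime, has no loops after RII, and still has a vertex lying on all of its connected pairs with the other vertices of degree $\geq 2$ — this last point requires care because gluing $v$ into $u$ can create new connected pairs at the merged vertex and can change degrees elsewhere, and we need the merged vertex (or some well-chosen vertex) to inherit the ``meets every connected pair'' property.

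The main obstacle I anticipate is the bookkeeping in these degenerate and boundary configurations: ensuring at each reduction that primeness is not lost (and handling the case where it is, by restricting to a prime component without destroying the hypotheses), that the glued or reduced quiver still has at least the required number of vertices so the induction hypothesis genuinely applies, and above all that the property ``one vertex meets every connected pair, all others have in- and out-degree $\geq 2$'' is preserved — since gluing along a connected pair can both create connected pairs and lower degrees, the verification that the merged vertex can serve as the new distinguished vertex is the delicate heart of the argument. The case analysis (I)--(IV) style enumeration already used in Lemma \ref{lem:23conpairnotci} suggests the proof here will likewise split into several subcases according to how the paths between $v$ and $u$ sit relative to the rest of $Q$, and keeping that enumeration exhaustive will be the real work.
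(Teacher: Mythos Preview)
Your parallel-induction scheme and the base cases are right, and your instinct to glue a cycle or a connected pair and pass to a local quiver is the correct move. But there is a real gap in how you close the induction, and your use of RIV and Lemma~\ref{lem:23conpairnotci} here is a detour that does not help.

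For the inductive step of Lemma~\ref{lem:noconpairs2} the paper does \emph{not} count paths between $v$ and $u$ or invoke RIV. Note that RIV is a statement about preserving the C.I.\ property; for the descendant argument it is irrelevant, since gluing a connected pair is always a local-quiver operation and hence always produces a descendant. What the paper does is: glue $(v,u)$ to get $Q'$, argue that all prime components of $Q'$ meet in the glued vertex $v'$, and then use the induction hypothesis \emph{contrapositively}. Each prime component of $Q'$ inherits the hypotheses of Lemma~\ref{lem:noconpairs2} except possibly the lower bound on the number of vertices; so if $Q$ has no G1/G2 descendant, every prime component of $Q'$ must have exactly two vertices and hence be the single exceptional two-vertex quiver (two arrows each way). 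This forces a very rigid structure on $Q$: every vertex other than $v,u$ has exactly two arrows to $\{v,u\}$ and two from $\{v,u\}$, and nothing else. From that structure one explicitly exhibits small subquivers with a G1 descendant. The same contrapositive-structural argument handles the subcase where $Q$ has no connected pair but $v$ has in- or out-degree $1$ (apply RI on $v$ instead of gluing). Your plan of ``check the smaller quiver still satisfies the hypotheses and apply induction'' will stall precisely when the prime components drop to two vertices, and that is where the actual work is.

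The same issue recurs in your plan for Lemma~\ref{lem:noconpairs1}. The paper glues a cycle $c$ of \emph{minimal} length (not an arbitrary cycle of length $\geq 3$); minimality is used to control which extra arrows can run between vertices of $c$. Again the induction hypothesis is used contrapositively to force the prime components of the glued quiver to be two-vertex quivers of the exceptional form, which pins down how the vertices outside $c$ attach to $c$. Only then does a case analysis on the length of $c$ (equal to $|V|$, to $|V|-1$, or smaller) produce explicit subquivers with G2 (or G1) descendants. Without the minimality of $c$ and without the structural conclusion from the contrapositive, your plan does not reach these configurations.
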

\begin{proof}
We will prove by induction on the number of vertices in $Q$. Supposing
that Lemma \ref{lem:noconpairs1} holds for quivers with at most $k$
vertices and Lemma \ref{lem:noconpairs2} holds for quivers with at
most $k-1$ vertices, we will first show that Lemma \ref{lem:noconpairs2}
holds for quivers with $k$ vertices as well. Then we will use this
result to show that Lemma \ref{lem:noconpairs1} holds for quivers
with $k+1$ vertices. 

In the case $Q$ is a quiver with two vertices Lemma \ref{lem:noconpairs1}
holds trivially since a strongly connected quiver with 2 vertices
contains a connected pair. Lemma \ref{lem:noconpairs2} does not hold
for two vertices since the quiver:\\
(I)\includegraphics[scale=0.7]{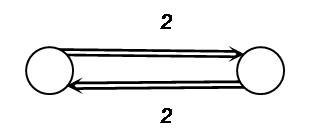}\\
satisfies the conditions. Noteworthily it is the only exception
(since one of the vertices need to have in- and out-degree 2 or greater
and any further arrows would result in the containing a G1 subquiver).

We will show the lemmas directly in the case of three vertices (it
suffices to show Lemma \ref{lem:noconpairs2}). Let the vertices be
$v$, $u_{1}$, $u_{2}$ with $u_{1}$ and $u_{2}$ having in- and
out-degrees 2 or greater, and $u_{1}$ and $u_{2}$ not forming a
connected pair. For the sake of simplicity we can suppose that there
is no arrow from $u_{2}$ to $u_{1}$. In this case there has to be
at least 2 arrows going from $u_{2}$ to $v$, and 2 arrows going
from $v$ to $u_{1}$ (because of the condition on the in- and out-degrees),
and at least one arrow going from $u_{1}$ to $u_{2}$ (because the
quiver is not prime). Moreover if there is only one arrow going from
$u_{1}$ to $u_{2}$ then there also has to be at least one arrow
going from $v$ to $u_{2}$ since the in-degree of $u_{2}$ is at
least 2. So the quiver will contain either G1 or\smallskip{}
\smallskip{}
\\
(II)\includegraphics[scale=0.7]{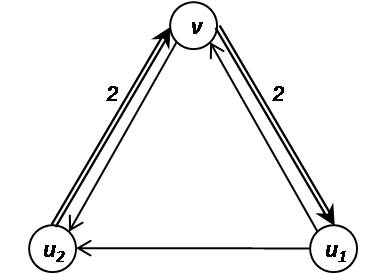}\\
 If we delete the arrow from $v$ to $u_{2}$ and apply RI on
$u_{2}$ we can see that G1 is a descendant of (II).

Now let us suppose that $Q$ has $k>3$ vertices, satisfies the conditions
in Lemma \ref{lem:noconpairs2} and is a C.I.. If there is neither
connected pairs nor a vertex with in- or out-degree less than 2 in
$Q$ we have a contradiction since we supposed that Lemma \ref{lem:noconpairs1}
holds for k vertices.

If there is a connected pair in $Q$, this pair has to contain the
vertex $v$ and some other vertex $u$. Let $Q'$ denote the local
quiver of $Q$ we get by gluing the vertices of this connected pair.
$Q'$ is strongly connected but not necessarily prime. However if
$Q'$ has non-trivial prime components $Q_{1}',\ldots,Q_{k}'$ then
it must be the connected sum $Q'=Q_{1}'\#_{v'}Q_{2}'\#_{v'}\ldots\#_{v'}Q'_{k}$
since if two prime components would meet in some vertex $w'\neq v'$
then there would be two vertices $x',y'$ in $Q'$ such that all paths
between them run through $w'$, and since $w'$ has a unique pre-image
in $Q$ the same would have to hold in $Q$ for the pre-images of
$x',y',w'$ which would contradict with $Q$ being a prime quiver.
We can conclude that all vertices in the prime components, except
$v'$, have as many in- and out-degrees as they have in $Q'$, which
is the same as their pre-images have in $Q$. Also $v'$ will be contained
in all connected pairs of $Q'$ and thus in all connected pairs of
the prime components. If $Q$ has no descendant that is G1 or G2 then
neither does $Q'$ or its prime components, and due to the induction
hypothesis on Lemma \ref{lem:noconpairs2} this means that the prime
components of $Q'$ have to be quivers with two vertices that are
of form (I) (otherwise they would satisfy the conditions in Lemma
\ref{lem:noconpairs2}). This means that every vertex in $Q$ aside
of $v$ and $u$ have exactly two arrows going to either $u$ or $v$
and exactly two arrows arriving from either $u$ or $v,$ and no other
arrows. Since $Q$ is prime, there has to be an arrow between $u$
and some other vertex $w_{1}\neq v$. We can suppose this arrow is
pointing from $u$ to $w_{1}$. Since $u$ and $w_{1}$ are not a
connected pair the two arrows leaving $w_{1}$ are both pointing to
$v$. Since $u$ has in-degree of at least 2 there has to be either
two arrows from $v$ to $u$, or an arrow pointing to $u$ from some
other vertex $w_{2}$. In the latter case there also has to be two
arrows pointing from $v$ to $w_{2}$. This means $Q$ contains one
of the following sub-quivers:\\
(V)\includegraphics[scale=0.7]{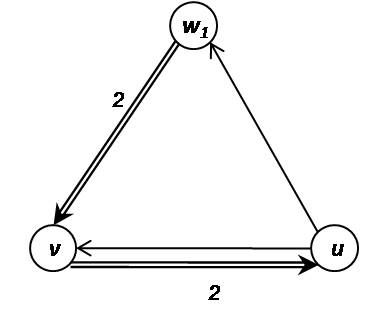}\smallskip{}
\\
(VI)\includegraphics[scale=0.7]{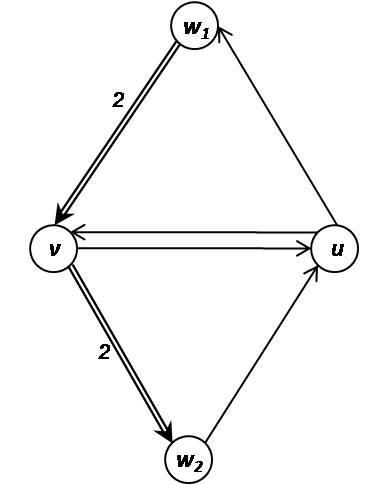}\\
both of which have a G1 descendant. (We can see this by applying
RI on $w_{1}$ in the first case and both $w_{1}$ and $w_{2}$ in
the second case.)

If $Q$ contains no connected pairs but it has a vertex $v$ with
out-degree 1, then let $u$ denote the vertex where the only arrow
leaving $v$ points to, and $Q'$ denote the quiver we get by applying
RI on $v$. Applying the same argument as above on $Q'$ we can conclude
that $Q'=Q_{1}'\#_{v'}Q_{2}'\#_{v'}\ldots\#_{v'}Q'_{k}$ where the
$Q'_{i}$-s are all of form (I). Since $u$ has in-degree of at least
2 in $Q$ there has to be an arrow entering $u$ from some vertex
$w_{2}\neq v$. Since $Q$ is strongly connected it must contain an
arrow that points to $w_{2}$, however since the prime components
of $Q'$ all meet in $v'$ this arrow can only leave from $u$ or
$v$. Since there is only one arrow leaving $v$ and that points to
$u,$ we can conclude that there is an arrow pointing from $u$ to
$w_{2}$ contradicting with the supposition that $Q$ contains no
connected pairs.

Now we are left to prove Lemma \ref{lem:noconpairs1} on $k$ vertices,
supposing that we already know that both lemmas are true for quivers
with at most $k-1$ vertices. Suppose $Q$ is a counterexample. Let
$c$ be a cycle of length $l$ in $Q$ , going through the vertices
$v_{1},v_{2},...,v_{l}$. For the sake of simplicity $v_{i}$ and
$v_{j}$ will denote the same vertex if $i\equiv j\;(mod\; l)$. If
$c$ is a cycle of minimum length in $Q$ then any arrow between the
vertices of $c$ has to point from some $v_{i}$ to $v_{i+1}$. Since
if an arrow pointed to any vertex at least two steps away in the cycle
it would form a shorter cycle than $l$ along with some of the original
arrows of $c$. Also no arrow can point from $v_{i}$ to $v_{i-1}$
since $Q$ contains no connected pairs. There can be no more than
two extra arrows going between the vertices of $c$ otherwise by deleting
everything from $Q$ but $c$ and three of these extra arrows we can
see that it has a $G2$ descendant. Also if we look at the local quiver
$Q'$ we get by gluing the vertices of this cycle, using the same
argument as above, we can see that the prime components in $Q'$ satisfy
the conditions of Lemma \ref{lem:noconpairs2} except for having at
least three vertices, so $Q'$ has to either consist of a single vertex
(if $l=k$) or be a connected sum of quivers of form (I). This means
that any vertex in Q other then $v_{1},v_{2},...,v_{l}$ will have
in- and out-degree 2 and all of its arrows will point to a vertex
in $c$ or come from a vertex in $c$. 

If the minimal cycle length in $Q$ is $k,$ then, since all vertices
have in- and out-degree of at least 2, $Q$ will contain the subquiver:\smallskip{}

\includegraphics[scale=0.7]{graf9}\\
which clearly has a G2 descendant.

If the minimal cycle length in $Q$ is $k-1$ then let $c$ be a minimal
cycle with vertices $v_{1},v_{2},...,v_{k-1}$ and $u$ be the only
vertex of $Q$ that is not in $c$. As noted above there is two arrows
going from $u$ to $c$ and two arrows going from $c$ to $u$. If
an arrow points from $v_{i}$ to $u$ and another arrow points from\linebreak{}
$u$ to $v_{j}$ then $j-i\equiv1\;(mod\; k-1)$ or
$i-j\equiv1\;(mod\; k-1)$ , otherwise the cycle $u,v_{j},v_{j+1}\ldots v_{i},u$
would be shorter than $k-1$ contradicting that $c$ is a cycle of
minimal length. For this condition to be satisfied either the two
arrows entering $u$ have to point to the same vertex or the two arrows
leaving $u$ have to come from the same vertex. We can suppose the
latter holds. Also note that if there is no arrow going from $u$
to some $v_{i}$ then there has to be at least two arrows going from
$v_{i-1}$ to $v_{i}$, and similarly if there is no arrow going from
some $v_{i}$ to $u$ then there has to be at least two arrows going
from $v_{i}$ to $v_{i+1}$ due to all vertices having in- and out-degrees
of at least 2. So depending on where the arrows departing from $u$
point to, $Q$ will contain one of the following three sub-quivers:\\
\includegraphics[scale=0.7]{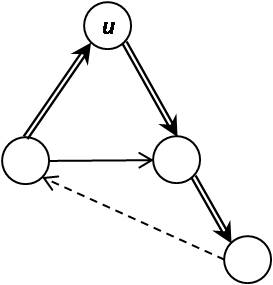}\includegraphics[scale=0.7]{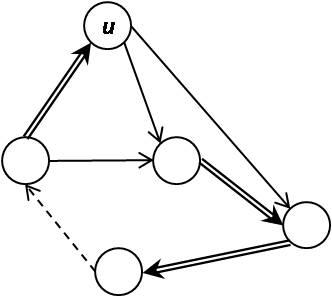}\includegraphics[scale=0.7]{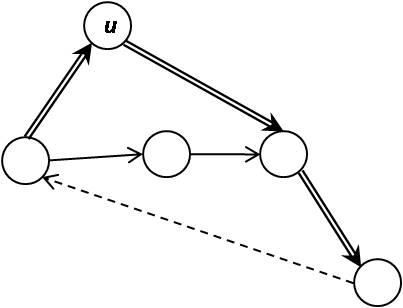}\\
All of these contain a cycle with three double vertices and thus
have a G2 descendant.

If the minimal cycle length in $Q$ is smaller than $k-1$ , there
is a minimal cycle $c$ in $Q$ with vertices $v_{1},v_{2},...,v_{l}$
and at least two vertices $u_{1}$ and $u_{2}$ outside this cycle.
Applying the same argument as above we can suppose that there is two
arrows going from $v_{1}$ to $u_{1}$. There has to be at least two
arrow pointing to $v_{1}$ and none of these can be leaving from $u_{1}$.
If both arrows entering $v_{1}$ are leaving from $v_{l}$ then we
can regard the subquiver of $Q$ consisting of the cycle $c$ the
vertex $u_{1}$ and the four arrows that are incident to $u_{1}$.
This will be of form:\medskip{}

\includegraphics[scale=0.7]{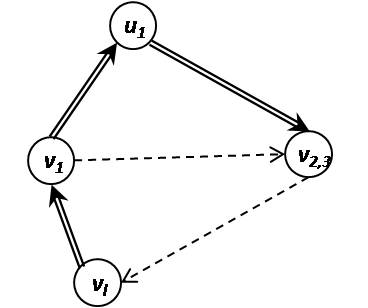}\\
or of form:

\includegraphics[scale=0.7]{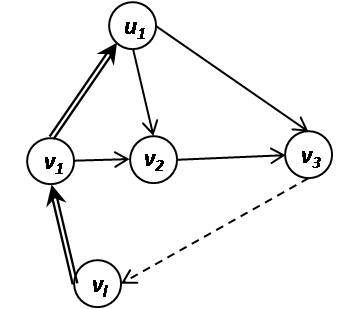}\\
The first case already contains a cycle with three double arrows,
and the second case can be reduced to the first case by applying RI
on $v_{2}$.

If one of the arrows entering $v_{1}$ are leaving from some vertex
$u_{2}$ outside the cycle $c$, let us regard the subquiver $Q'$
that consists of the cycle $c$, the vertices $u_{1},u_{2}$ and the
eight arrows that are incident to $u_{1}$ or $u_{2}.$ If there is
an arrow from $u_{1}$ to $v_{3}$ then there can be no arrow between
$u_{2}$ and $v_{2}$ (either direction) otherwise the local quiver
we get by gluing the vertices of the cycle $u_{1},v_{3},v_{4}...v_{1},u_{1}$
would not be a connected sum of quivers of form (I) (since the images
of $u_{2}$ and $v_{2}$ would be separate vertices in that quiver
with an arrow between them) contradicting the induction hypothesis
on Lemma \ref{lem:noconpairs2}. Thus if there is an arrow from $u_{1}$
to $v_{3}$ then there is only one arrow leaving $v_{2}$ in $Q'$
and thus we can apply RI, and in the resulting quiver we will have
a double arrow from $u_{1}$ to $c$ and a double arrow from $c$
to $u_{1}$. Depending on how the arrows incident to $u_{2}$ are
arranged, $Q'$ will be one of the following quivers:\\
\includegraphics[scale=0.7]{graf14}\\
or\\
\includegraphics[scale=0.7]{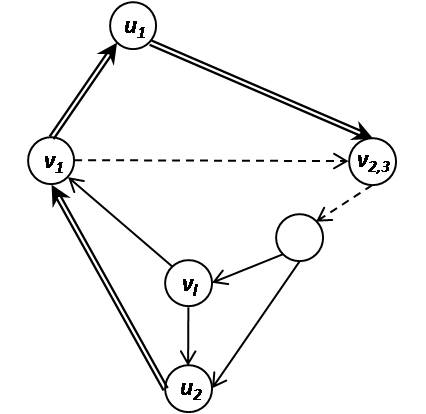}or\includegraphics[scale=0.7]{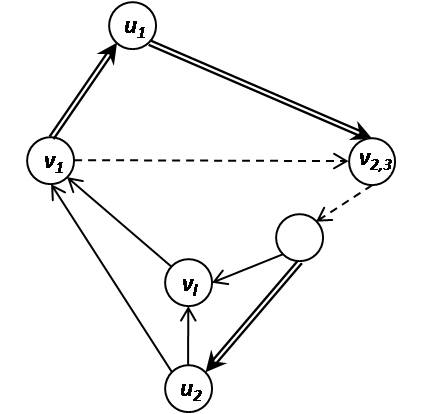}\\
all of which contain a cycle with three double arrows, therefore
have a G1 descendant.
\end{proof}
This puts us into position to state our two main results:
\begin{thm}
If $(Q,\alpha)$ is a strongly connected, prime, C.I. quiver setting
with $\alpha=(1,...,1)$ and none of the reduction steps RI-RIV can
be applied, then $Q$ consists of a single vertex with no loops.\end{thm}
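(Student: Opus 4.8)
The plan is to assume $Q$ has at least two vertices and derive a contradiction, working from the structural constraints that "none of RI--RIV applies" imposes together with the previous lemmas. Since RI, RII, RIII cannot be applied, $Q$ is reduced: it has no loops and every vertex has in-degree and out-degree at least $2$. Since it is prime and strongly connected with $\alpha=(1,\dots,1)$, Lemma~\ref{lem:noconpairs1} tells us immediately that \emph{if $Q$ contains no connected pair}, then G1 or G2 is a descendant of $Q$; but neither G1 nor G2 is a complete intersection, and being a C.I.\ passes to descendants, so this is impossible. Hence $Q$ \textbf{must contain a connected pair} $(v_1,v_2)$.

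Now I would run through the possible "shapes" of the paths between $v_1$ and $v_2$ and show each one is excluded. Let $p,q$ denote the number of directed paths from $v_1$ to $v_2$ and from $v_2$ to $v_1$ respectively; note $p,q\geq 1$ because of the connecting arrows. First, if $p=1$ or $q=1$, then case (b) or (c) of Theorem~\ref{thm: RIV} (RIV) applies to the pair $(v_1,v_2)$, contradicting that no reduction step can be applied. So $p,q\geq 2$. Next, if $p\geq 3$ and $q\geq 2$ (or symmetrically $q\geq 3$ and $p\geq 2$), Lemma~\ref{lem:23conpairnotci} shows G1 or G2 is a descendant of $Q$, again contradicting the C.I.\ property. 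The only remaining possibility is $p=q=2$, i.e.\ there are exactly two paths each way --- but then case (a) of RIV applies to $(v_1,v_2)$, once more contradicting that no reduction step can be applied. Every case leads to a contradiction, so $Q$ cannot have two or more vertices; it consists of a single vertex, and since it is reduced it has no loops.

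The main subtlety --- really the only nontrivial point --- is making sure the three path-count cases genuinely exhaust all connected pairs: one must check that "$p$ paths from $v_1$ to $v_2$ and $q$ from $v_2$ to $v_1$" with $p,q\geq 1$ is partitioned into $\{p=1\}\cup\{q=1\}\cup\{p,q\geq 2,\ \max(p,q)\geq 3\}\cup\{p=q=2\}$, which is clear, and that Lemma~\ref{lem:23conpairnotci} applies up to swapping the roles of $v_1$ and $v_2$ (it is stated with "$\geq 3$ one way, $\geq 2$ the other," so the symmetric case is covered by relabeling). Everything else is a direct invocation of results already in the excerpt: Lemma~\ref{lem:noconpairs1} handles the no-connected-pair case, RIV handles $p=1$, $q=1$, and $p=q=2$, and Lemma~\ref{lem:23conpairnotci} handles the large cases, with the observation that G1 and G2 are not complete intersections (stated above) closing each branch.
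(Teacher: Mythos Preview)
Your argument is correct and is exactly the approach the paper takes: the paper's proof reads in full ``Follows immediately from Lemmas~\ref{lem:23conpairnotci} and~\ref{lem:noconpairs1},'' and you have simply unpacked that sentence---using Lemma~\ref{lem:noconpairs1} when there is no connected pair, and for a connected pair splitting on the path counts $(p,q)$ so that either RIV applies (cases $p=1$, $q=1$, or $p=q=2$) or Lemma~\ref{lem:23conpairnotci} applies (the remaining case $\min(p,q)\geq 2$, $\max(p,q)\geq 3$).
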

\begin{proof}
Follows immediately from Lemmas \ref{lem:23conpairnotci} and \ref{lem:noconpairs1}.\end{proof}
\begin{thm}
\label{thm:Forbidden descendant}A quiver setting $(Q,\alpha)$ with
$\alpha=(1,...,1)$ is C.I. if and only if it has no G1 or G2 descendants.\end{thm}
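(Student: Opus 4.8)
The plan is to prove the biconditional in Theorem~\ref{thm:Forbidden descendant} by treating the two directions separately, reducing the general case to the strongly connected prime case via Lemmas~\ref{lem:sc} and~\ref{lem:connectedsum}, and then invoking the reduction-step machinery together with Lemmas~\ref{lem:23conpairnotci} and~\ref{lem:noconpairs1}.

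\medskip

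\textbf{The easy direction (necessity).} Suppose $Q$ has a G1 or G2 descendant. Since G1 and G2 are not complete intersections (as remarked in the text), and since by the closing remark of the descendant definition a quiver setting that is C.I. has all of its descendants C.I., it follows immediately that $Q$ is not C.I. This direction requires no work beyond citing the definition of descendant and the fact that all four operations (RI--RIII, subquivers, local quivers) preserve the C.I. property when passing \emph{down}; these are exactly Lemmas~\ref{lem:RI}--\ref{lem:RII}, Remark~\ref{rem:LocalQ}, and the subquiver statement from \cite[Lemma 4.3]{CI}, plus Theorem~\ref{thm: RIV} for RIV (note RIV preserves C.I.\ in both directions, so in particular downward).

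\medskip

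\textbf{The hard direction (sufficiency).} Suppose $Q$ has no G1 or G2 descendant; we must show $Q$ is C.I. First, by Lemma~\ref{lem:sc} and Lemma~\ref{lem:connectedsum}, $\mathbb{C}[iss_\alpha Q]$ is a tensor product of the coordinate rings of the strongly connected prime components of $Q$ (after discarding dimension-zero vertices, which do not affect anything here since $\alpha\equiv 1$). A tensor product of $\mathbb{C}$-algebras is a complete intersection iff each factor is, so it suffices to handle a single strongly connected prime quiver $Q$ with no G1 or G2 descendant. Now apply reduction steps RI, RII, and RIV as long as any of them applies; by Lemmas~\ref{lem:RI}, \ref{lem:RII} and Theorem~\ref{thm: RIV} none of these changes whether $\mathbb{C}[iss_\alpha Q]$ is a complete intersection, and since RI and RII strictly decrease the size of the quiver while RIV strictly decreases the vertex count, this process terminates. (One must check that the strongly connected and prime properties are preserved, or re-decompose into prime strongly connected components after each step and argue componentwise; RI and RIV can disconnect or de-prime a quiver, so the cleanest bookkeeping is: maintain a finite multiset of strongly connected prime quivers, none of which has a G1 or G2 descendant, and at each stage either one of them is a single loopless vertex — which we set aside, it contributes a polynomial ring — or it admits one of RI, RII, RIV.) The key point is that subquivers and local quivers of a quiver with no G1/G2 descendant again have no G1/G2 descendant, so the ``no forbidden descendant'' hypothesis propagates through all the reductions.

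\medskip

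The crux is then to show that a strongly connected, prime quiver with $\alpha\equiv 1$, no loops, every vertex of in- and out-degree $\ge 2$ (i.e.\ reduced), on which RIV cannot be applied, and with no G1 or G2 descendant, must be a single vertex. If such a $Q$ had at least two vertices: either it contains a connected pair $(v_1,v_2)$ or it does not. If it does not, Lemma~\ref{lem:noconpairs1} directly produces a G1 or G2 descendant, a contradiction. If it does contain a connected pair, count the paths between $v_1$ and $v_2$ in each direction: if there are $\ge 3$ one way and $\ge 2$ the other, Lemma~\ref{lem:23conpairnotci} gives a G1 or G2 descendant; the remaining possibilities --- exactly two paths each way, or exactly one path in some direction --- are precisely cases (a), (b), (c) of Theorem~\ref{thm: RIV}, so RIV applies, contradicting that $Q$ is RIV-irreducible. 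Hence $Q$ is a single loopless vertex, $\mathbb{C}[iss_\alpha Q]=\mathbb{C}$, which is trivially a complete intersection, and tracing back through the reductions, the original $Q$ is C.I. I expect the main obstacle to be the bookkeeping in the second-to-last paragraph: ensuring that after each application of RI or RIV one can cleanly restore the strongly connected + prime hypotheses needed to re-invoke Lemmas~\ref{lem:23conpairnotci} and~\ref{lem:noconpairs1}, and that the ``no G1/G2 descendant'' property is genuinely inherited at every step --- in particular verifying that RIV's construction is itself realizable as a composition of descendant-operations (it is a local quiver followed by deleting loops, hence it is), so that ``descendant'' is closed under RIV as well.
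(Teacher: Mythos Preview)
Your proof is correct and follows essentially the same route as the paper's own argument: both directions rest on the fact that G1 and G2 are not C.I., that the descendant operations preserve the C.I.\ property downward, and that Lemmas~\ref{lem:23conpairnotci} and~\ref{lem:noconpairs1} together force any strongly connected prime reduced quiver on which RIV is inapplicable either to be a single vertex or to have a G1/G2 descendant. The paper's proof is a two-line contrapositive of your sufficiency direction, and your extra care with the bookkeeping (re-decomposing into strongly connected prime components after each reduction, checking that RIV is itself a descendant operation, and verifying the ``no G1/G2 descendant'' hypothesis propagates) fills in details the paper leaves implicit but does not constitute a different approach.
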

\begin{proof}
If it is not a C.I. then we can not apply our reduction steps to it
to reduce it into a quiver with one vertex. By the two lemmas it means
it has a G1 or G2 descendant.
\end{proof}

\section{Describing coregular quivers with forbidden descendants}

Based on the results in \cite{Coregular} coregular quiver settings
(with arbitrary dimension vectors) can be characterized in a similar
manner to \ref{thm:Forbidden descendant}. The hope is that one can
obtain such result, with a relatively short list of forbidden descendants,
in the general case for C.I. quiver settings with arbitrary dimension
vector as well, which seem to be very difficult to describe via the
reduction-step method.
\begin{thm}
A quiver setting $(Q,\alpha)$ is coregular if and only if it has
no descendant of the following form:

\includegraphics[scale=0.7]{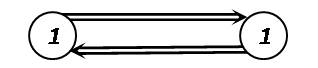}\end{thm}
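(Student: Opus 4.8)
The plan is to reduce the statement to the known classification of coregular quiver settings, Theorem~\ref{thm:coregular}, exactly as Theorem~\ref{thm:Forbidden descendant} was reduced to the reduction-step machinery. Write $G$ for the forbidden descendant in the statement. One direction is essentially the content of Remark~\ref{rem:LocalQ}: if $(Q,\alpha)$ has $G$ as a descendant, then since descendants are obtained by RI--RIII, taking subquivers, and taking local quiver settings --- each of which preserves coregularity by Lemmas~\ref{lem:RI}, \ref{lem:RII}, the reduction RIII, the subquiver lemma \cite[Lemma 2.3]{Coregular}, and Remark~\ref{rem:LocalQ} --- it suffices to observe that $G$ itself is not coregular, which is a direct computation (one checks $iss_\alpha G$ is not smooth, e.g.\ that its defining ideal is nonzero, or equivalently that it is not an affine space). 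So the real work is the converse.

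For the converse I would argue the contrapositive: assume $(Q,\alpha)$ is not coregular and produce $G$ as a descendant. By Lemma~\ref{lem:sc} and Lemma~\ref{lem:connectedsum} we may assume $(Q,\alpha)$ is strongly connected and prime, since coregularity is preserved under the relevant decompositions and a non-coregular quiver must have a non-coregular prime strongly connected piece. Now apply the reduction steps RI, RII, RIII repeatedly until the quiver setting is reduced; by the cited lemmas this does not change coregularity, so the resulting reduced genuine strongly connected quiver setting $(Q',\alpha')$ is still not coregular (and, since a single vertex with no loops \emph{is} coregular, $Q'$ is not that quiver). The point of the theorem is then: \emph{every} reduced genuine strongly connected quiver setting other than the three coregular ones of Theorem~\ref{thm:coregular} --- in particular every such non-coregular one --- has $G$ as a descendant. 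Since the classification in Theorem~\ref{thm:coregular} is a finite list of explicit coregular quivers, what has to be shown is that any reduced quiver setting falling outside that list admits $G$ via further contractions of strongly connected components and passage to subquivers.

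The main obstacle, and the heart of the proof, is this last structural step, and I expect it to split along the two branches already visible in the paper. When $\alpha\equiv 1$ the relevant structure is controlled by Lemma~\ref{lem:coregularcorrection}: a reduced strongly connected loopless quiver with all in- and out-degrees $\ge 2$ has $F(Q)\ge 1$, hence is never coregular, and one traces through the inductive argument there (gluing a short primitive cycle and invoking the auxiliary statement (*)) to extract $G$ as a descendant at the point where $F$ jumps above zero --- concretely, the subquiver of the form \texttt{graf9} appearing in that proof, or a local quiver of it, should already contain or reduce to $G$. When some vertex has dimension $\ge 2$, I would lean on the case analysis in \cite{Coregular} that underlies Theorem~\ref{thm:coregular}: in each non-coregular configuration one either finds a non-coregular subquiver with all vertices $1$-dimensional (reducing to the previous branch) or a vertex of dimension $\ge 2$ whose local structure, after an RIII-type reduction or a contraction, exhibits $G$. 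So the proof is really a bookkeeping pass over Bocklandt's classification, checking that ``not on the list'' forces the single forbidden descendant $G$; the delicate part is making sure the contractions used stay within the ``descendant'' operations (only strongly connected components may be glued) and that no non-coregular case is overlooked once the error in \cite{Coregular} flagged in Section~4 is corrected.
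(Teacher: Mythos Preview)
Your overall architecture is right and matches the paper: one direction is immediate from the fact that descendants preserve coregularity, and for the other you reduce to a strongly connected, reduced, non-coregular setting and then try to exhibit $G$. The $\alpha\equiv 1$ branch is also essentially the paper's: one glues a non-Hamiltonian cycle and invokes the strengthened statement~($*$) of Lemma~\ref{lem:coregularcorrection} to get a smaller non-coregular descendant, or, if every cycle is Hamiltonian, finds the ``double Hamiltonian cycle'' subquiver whose descendant is $G$.

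Where your proposal diverges, and where it is genuinely incomplete, is the case $\max_v \alpha(v)\ge 2$. You propose to ``lean on the case analysis in \cite{Coregular}'' and do a bookkeeping pass over the classification. But Theorem~\ref{thm:coregular} is a list of the \emph{coregular} reduced settings; being ``not on the list'' gives you no finite case structure to walk through, so this is not yet an argument. The paper avoids any such case analysis entirely with a single local-quiver trick: pick $v$ of maximal dimension, check via Theorem~\ref{thm:Simple} that $\alpha-\epsilon_v$ supports a simple representation, and take the local quiver for the decomposition $(\alpha-\epsilon_v)+\epsilon_v$. This local quiver has exactly two vertices, both of dimension~$1$, with $-\chi_Q(\alpha-\epsilon_v,\epsilon_v)$ arrows one way and $-\chi_Q(\epsilon_v,\alpha-\epsilon_v)$ the other; since $(Q,\alpha)$ is reduced, both numbers are at least~$2$, so $G$ sits inside as a subquiver. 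That one step replaces the entire higher-dimensional discussion you were anticipating, and it is the idea your sketch is missing.
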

\begin{proof}
We need to prove that any quiver setting except for the above that
is reduced and not coregular, has a non-trivial descendant that is
also not coregular.

Let $v$ be a vertex of $Q$ with maximal dimension. Suppose first
$\alpha(v)=1$. If $Q$ contains a non-Hamiltonian cycle consider
the strongly connected subquiver of $Q$ spanned by the vertices of
this cycle. If we take the local quiver corresponding this strongly
connected subquiver we obtain a quiver in which all vertices have
both in and out degree no less then $2$ except maybe for the new
vertex that we got gluing the vertices of the subquiver. Applying
the ({*}) version of \ref{lem:coregularcorrection} we see that this
local quiver is not coregular.

If all of the cycles of $Q$ are Hamiltionian then Q contains a subquiver
of the form 

\includegraphics[scale=0.7]{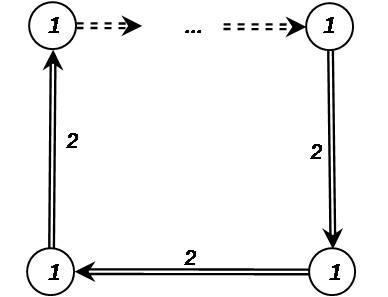}

which clearly has a descendant of the form:

\includegraphics[scale=0.7]{graf22d1}.

If $\alpha(v)\geq2$ it is easy to verify using \ref{thm:Simple}
that there is a simple representation with dimension vector $\alpha-\epsilon_{v}$
($\epsilon_{v}$ is the dimension vector that is $1$ in $\alpha$
and $0$ elsewhere). The local quiver corresponding to the decomposition
$(\alpha-\epsilon_{v})+\epsilon_{v}$ will consist of two vertices
of dimension one and the number of arrows between them will be $\chi_{Q}(\alpha-\epsilon_{v},\epsilon_{v})$
in one direction and $\chi_{Q}(\epsilon_{v},\alpha-\epsilon_{v})$
in the other. Since $(Q,\alpha)$ is reduced both of these are at
least $2$.
\end{proof}
\newpage{}

\newpage{}

\newpage{}


\begin{thebibliography}{13}
\bibitem{ArdriaenssensandLeBruyn} Adriaenssens, J. and Le Bruyn L.:
Local quivers and stable representations, Comm. Alg. 31 (2003), 1777-1797.

\bibitem{Coregular} Bocklandt, R.: Smooth quiver representation spaces,
J. Algebra \textbf{253}(2) (2002), 296-313.

\bibitem{CI}Bocklandt, R.: Quiver quotient varieties and complete
intersections, Algebras and Representation Theory (2005) 8: 127-145.

\bibitem{Lectures}Crawley-Boevey, W.: Lectures on Representations
of Quivers, has not been published in printed form, can be accessed
from the author's webpage: http://www.maths.leeds.ac.uk/\textasciitilde{}pmtwc/quivlecs.pdf

\bibitem{Domokos-Smooth} Domokos, M.: On singularities of quiver
moduli, Glasgow Math. J. 53 (2011) 131\textendash{}139.

\bibitem{Domokos-Zubkov:charp} Domokos, M. and Zubkov, A. N.: Semisimple
representations of quivers in characteristic p, Algebr. Represent.
Theory 5 (2002), 305-317.

\bibitem{Fisher}Fisher, K. G., Shapiro, J.: Mixed matrices and binomial
ideals, J. Pure Appl. Alg. 113 (1996), 39-54.

\bibitem{Gitler-Reyes-Villareal}Gitler, I., Reyes, E. and Villareal,
R. H.: Ring graphs and complete intersection toric ideals, Electronic
Notes in Discrete Mathematics 28C (2007), 393\textendash{}400.

\bibitem{Greco CI} Greco, S. et al.: Complete Intersections, Lecture
Notes in Math. 1092, Springer, Berlin, 1984.

\bibitem{King} A. D. King, Moduli of representations of finite dimensional
algebras, Q. J. Math. Oxford, 45(2) (1994), 515\textendash{}530.

\bibitem{LeB-Proc} Le Bruyn, L. and Procesi, C.: Semisimple representations
of quivers, Trans. Amer. Math. Soc. 317 (1990) 585\textendash{}598.

\bibitem{Luna} Luna, D.: Slices étales, Bull Soc Math France Mém
33 (1973) 81-105.

\bibitem{Villareal} Villareal, R. H.: Monomial algebras, Marcel Dekker,
Inc., 2001. 
\end{thebibliography}
\end{document}